\documentclass[12pt,leqno]{article}

\pdfoutput=1
\usepackage[a4paper,top=1.2in,bottom=1in,left=1.2in,right=1.2in]{geometry}

\usepackage{caption}
\captionsetup[figure]{font=footnotesize,labelfont=footnotesize}

\usepackage{titlesec}
\renewcommand{\thesubsection}{\thesection.\arabic{subsection}}
\titleformat{\section}[hang]{\normalfont\bfseries}{\thesection}{.5em}{}
\titlelabel{\subsubsection}{\empty}
\titleformat{\subsection}{\normalfont\bfseries}{\thesubsection.}{.5em}{}[]
\titleformat{\subsubsection}[hang]{\normalfont\it}{\thesubsubsection.}{.4em}{}[]

\renewcommand{\abstract}[1]{{\gdef\thepoabstract{#1}}}

\usepackage{amsthm}
\usepackage{thmtools}
\declaretheoremstyle[spaceabove=1em,spacebelow=1em,headfont=\footnotesize\bfseries,bodyfont=\footnotesize]{problemstyle}


\makeatletter
\renewcommand\maketitle
{
\thispagestyle{empty}
\rmfamily\selectfont
\ifdefined\@title{\noindent\Large\bfseries\centering \@title \par}\else\fi
\vspace{2em}
\ifdefined\@author{\centering\normalfont \@author \par}
\vspace{.5em}
\ifdefined\thepoaffiliation{\noindent\small\thepoaffiliation\par}\fi\vspace{5em}\else\fi
\ifdefined\thepoabstract{\small\noindent{{\bfseries Abstract}.\;\;}\thepoabstract\par\vspace{3.5em}}\else\fi
\ifdefined\thepokeywords{{\noindent\bfseries Keywords\;\;}\thepokeywords\par\vspace{5em}}\else\fi
\ifdefined\theporuntitle{\fancyhead[L]{\footnotesize\theporuntitle}}\fi
}
\makeatother

\usepackage[usenames,dvipsnames]{color}
\definecolor{RefColor}{rgb}{0,0,.85}
\definecolor{UrlColor}{rgb}{.5,.5,.5}%
\RequirePackage[colorlinks,linkcolor=RefColor,citecolor=RefColor,urlcolor=UrlColor,linktoc=page]{hyperref}
\hypersetup{pdfinfo={Subject={ }}}
\RequirePackage[OT1]{fontenc}
\usepackage[numbers,sort]{natbib}

\bibliographystyle{abbrvnat}

\usepackage[english]{babel}

\usepackage{enumitem}
\setlist[itemize]{leftmargin=1.5em}

\definecolor{CommentColor}{rgb}{0,0,.50}
\newcounter{amargincounter}

\definecolor{CommentColor2}{rgb}{0.5,0,0}

\usepackage{amsmath,amssymb,amscd,amsfonts,amsthm,mathtools}
\usepackage[noabbrev,capitalize]{cleveref}
\usepackage{dsfont}
\usepackage{thmtools}
\usepackage{nccmath}
\usepackage{scalerel}
\theoremstyle{plain}
\declaretheoremstyle[postheadspace=.4em,headfont=\bfseries,bodyfont=\itshape,spaceabove=8pt,
spacebelow=10pt]{basic}
\theoremstyle{basic}
\declaretheorem[style=basic,name={Theorem}]{theorem}
\declaretheorem[style=basic,sibling=theorem,name={Fact}]{fact}
\declaretheorem[style=basic,sibling=theorem,name={Lemma}]{lemma}
\declaretheorem[style=basic,sibling=theorem,name={Proposition}]{proposition}
\declaretheorem[style=basic,sibling=theorem,name={Corollary}]{corollary}
\theoremstyle{definition}

\newtheorem{example}[theorem]{Example}

\newtheorem*{remark*}{Remark}
\declaretheoremstyle[postheadspace=1em,
  mdframed={backgroundcolor=gray!10!white,
    hidealllines=true,
    innertopmargin=4pt,
    innerbottommargin=4pt,
    innerleftmargin=7pt,
    skipabove=8pt,
    skipbelow=10pt,
  nobreak=false}
]{grayboxed}

\DeclareMathOperator{\tsum}{{\textstyle\sum}}

\DeclareMathOperator{\msum}{\medmath\sum}

\newcommand{\mint}{\medint\int}
\DeclareMathOperator*{\argmax}{arg\,max}
\DeclareMathOperator*{\argmin}{arg\,min}

\newcommand{\argdot}{{\,\vcenter{\hbox{\tiny$\bullet$}}\,}}
\newcommand{\darrow}{\xrightarrow{\;\text{\tiny\rm d}\;}}

\newcommand{\mean}{\mathbb{E}}
\newcommand{\equdist}{\stackrel{\text{\rm\tiny d}}{=}}

\newcommand{\braces}[1]{{\lbrace #1 \rbrace}}

\DeclareRobustCommand{\svdots}{%
  \vbox{%
    \baselineskip=0.33333\normalbaselineskip
    \lineskiplimit=0pt
    \hbox{.}\hbox{.}\hbox{.}%
    \kern-0.2\baselineskip
  }%
}

\usepackage{bibentry}

\usepackage{booktabs}
\usepackage{tikz}
\usepackage[low-sup]{subdepth}
\usetikzlibrary{calc,shapes,shadings,positioning}
\usetikzlibrary{decorations.markings,decorations.pathreplacing}
\tikzstyle{mybraces}=[mirrorbrace/.style={
          decoration={brace, mirror},
          decorate},brace/.style={
          decoration={brace},
          decorate}]
\usepackage{pst-node}
\usepackage{tikz-cd}

\DeclareMathOperator*{\medcap}{\raisebox{-.15em}{$\mathbin{\scalebox{1.5}{\ensuremath{\cap}}}$}}

\newcommand{\step}{\refstepcounter{proofstep}$\theproofstep^\circ$\ }
\newcounter{proofstep}
\AtBeginEnvironment{proof}{\setcounter{proofstep}{0}}

\newcommand{\A}{\mathbf{A}}

\newcommand{\TV}{{\text{\rm\tiny TV}}}

\newcommand{\ca}{\mathbf{ca}}
\renewcommand{\L}{\mathbf{L}}
\renewcommand{\sp}[1]{\left<\mkern2mu\smash{#1}\mkern2mu\right>}

\newcommand{\ex}{\text{\rm ex}}
\newcommand{\Folner}{F{\o}lner }

\newcommand{\orb}{\Pi}
\renewcommand{\P}{\mathcal{P}}
\newcommand{\ch}{\text{\rm co}\,}
\newcommand{\cch}{\overline{\text{\rm co}}\,}

\newcommand{\cid}{\text{\tiny\rm D}}
\newcommand{\empavg}{\mathbf{F}}
\newcommand{\ba}{\mathbf{ba}}
\newcommand{\XtimesY}{\Omega_1\!\times\Omega_2}
\newcommand{\cc}{\theta}
\newcommand{\kword}[1]{{\bf #1}\index{#1}}

\newcommand{\xspace}{\mathbf{X}}
\newcommand{\yspace}{\mathbf{Y}}

\newcommand{\group}{\mathbb{G}}

\renewcommand{\sp}[1]{\left<\mkern2mu\smash{#1}\mkern2mu\right>}

\newcommand{\C}{\mathbf{C}}
\renewcommand{\L}{\mathbf{L}}

\newcommand{\gnorm}[1]{{\left\vert\kern-0.25ex\left\vert\kern-0.25ex\left\vert #1 \right\vert\kern-0.25ex\right\vert\kern-0.25ex\right\vert}}

\newcommand{\mysetminusD}{\hbox{\tikz{\draw[line width=0.6pt,line cap=round] (3pt,0) -- (0,6pt);}}}
\newcommand{\mysetminusT}{\mysetminusD}
\newcommand{\mysetminusS}{\hbox{\tikz{\draw[line width=0.45pt,line cap=round] (2pt,0) -- (0,4pt);}}}
\newcommand{\mysetminusSS}{\hbox{\tikz{\draw[line width=0.4pt,line cap=round] (1.5pt,0) -- (0,3pt);}}}
\newcommand{\mysetminus}{\mathbin{\mathchoice{\mysetminusD}{\mysetminusT}{\mysetminusS}{\mysetminusSS}}}
\renewcommand{\setminus}{\mysetminus}

\renewcommand{\xspace}{X}
\renewcommand{\yspace}{Y}

\begin{document}

\title{Global optimality under amenable symmetry constraints}
\author{Peter Orbanz}

\begin{abstract}
  {
    Consider a convex function that is invariant under an group of transformations. If it has a minimizer, does it also have an invariant minimizer? Variants of this problem appear in nonparametric
    statistics and in a number of adjacent fields. The answer depends on the choice of function, and on what one may
    loosely call the geometry of the problem---the interplay between convexity, the group,
    and the underlying vector space, which is typically infinite-dimensional. We observe that this geometry is completely
    encoded in the smallest closed convex invariant subsets of the space, and proceed to study these sets,
    for groups that are amenable but not necessarily compact.
    We then apply this toolkit to the invariant optimality problem. It yields new results on invariant kernel mean embeddings and risk-optimal invariant couplings,
    and clarifies relations between seemingly distinct ideas, such as the summation trick used in machine learning to construct equivariant neural networks and the
    classic Hunt-Stein theorem of statistics.
  }
\end{abstract}

\maketitle

\section{Introduction}

We consider the following problem: Given are a group $\group$ of linear bijections of a (topological) vector space $\xspace$,
and a function ${f:\xspace\to\mathbb{R}\cup\braces{\infty}}$ that is convex, lower semi-continuous (lsc), and invariant under $\group$.
If $f$ has a minimizer, we ask whether it also has a minimizer that is
$\group$-invariant, i.e.\ a simultaneous fixed point of all ${\phi\in\group}$.
That is obviously true if $f$ is strictly convex---since $f$ is invariant, its minimizers form an invariant set, and if
the minimizer is unique, it must itself be invariant---but there are a range of problems
where $\xspace$ is infinite-dimensional and one cannot assume strict convexity.
Examples include the Hunt-Stein theorem in statistics \citep{LeCam:1986,Eaton:George:2021},
various problems that arise in machine learning
applications to science (such as whether certain energies in a crystalline solid have symmetric ground states)
\citep[e.g.][]{Pfau:Spencer:Matthews:Foulkes:2020}, the existence of invariant optimal transportation plans,
and the principle of symmetric criticality in variational analysis \citep{Willem:1996}.
Although these problems seem to differ at first glance, we show in the following that they all share the same
underlying structure, and studying this structure in its own right leads to new applications.
We use the remainder of this section to summarize our approach and results, and postpone a detailed review
of related work to \cref{sec:related}.

\newpage
{\noindent\bf Problem sketch}.
Our vectors ${x\in\xspace}$ are typically functions or measures on a suitable space $\Omega$, which we loosely think of as
a sample space, and $f$ is a risk, energy, or similar functional on $\xspace$.
The linear transformations ${\phi:\xspace\to\xspace}$ often arise as follows: We start with 
a group $\group$ of measurable or continuous bijections ${\phi:\Omega\to\Omega}$.
Given a function ${x=h}$ or a measure ${x=\mu}$ on $\Omega$, we define
\begin{equation}
  \label{transforming:functions}
  \phi h\;:=\;h\circ\phi^{-1}
  \qquad\text{ or }\qquad
  \phi \mu\;:=\;\mu\circ\phi^{-1}\qquad\text{ for }\phi\in\group\;,
\end{equation}
i.e.~$\phi x$ is the composition or image measure. In either case, this specifies a bijection ${\phi:\xspace\to\xspace}$, which is linear even if the map
${\phi:\Omega\to\Omega}$ is not. The function $f$ is $\group$-invariant if ${f(\phi x)=f(x)}$
for all ${\phi\in\group}$ and ${x\in\xspace}$.
Now consider the optimization problem stated at the outset. Even if $f$ is not strictly convex, a simple solution exists if $\group$ is finite:
Set
\begin{equation}
  \label{summation:trick}
  \bar{x}\;=\;\mfrac{1}{|\group|}\msum_{\phi\in\group}\phi x
  \;=\;
  \mfrac{1}{|\group|}\msum_{z\in\group(x)}z
  \qquad\text{ for }x\in\xspace\;,
\end{equation}
where ${\group(x):=\braces{\phi(x)|\phi\in\group}}$ is the orbit of $x$.
Then $\bar{x}$ is $\group$-invariant. This ``summation trick'' is commonly used to construct invariant objects
\citep[e.g.][]{Minsky:Papert:1969,Shawe-Taylor:1989,Wood:Shawe-Taylor:1996}.
Clearly, the vector $\bar{x}$ satisfies
\begin{equation*}
  f(\bar{x})\;\leq\;\sup\nolimits_{\phi\in\group}f(\phi x)
  \quad\text{ in general and }\quad
  f(\bar{x})\;\leq\;f(x)
  \quad\text{ if $f$ is $\group$-invariant}.
\end{equation*}
For invariant functions, we can therefore turn an arbitrary minimizer $x$ of $f$ into a $\group$-invariant
minimizer $\bar{x}$.
\\[.5em]
We are interested in problems where $\group$ is not finite or compact.
In this case, \eqref{summation:trick} is not defined. Our strategy is as follows:
Suppose for the moment that $\group$ is countable. (The uncountable case requires some additional formalism,
but is conceptually similar, see \cref{sec:day}.)
We approximate \eqref{summation:trick}
by partial group averages
\begin{equation}
  \label{summation:trick:Folner}
  \empavg_n(x)\;:=\;\mfrac{1}{|\A_{n}|}\msum_{\phi\in\A_{n}}\phi x\;,
\end{equation}
where ${\A_1,\A_2,\ldots}$ is a sequence of finite subsets of ${\group}$ that satisfy
\begin{equation}
  \label{eq:Folner:countable}
  \frac{|\A_n\cap\phi\A_n|}{|\A_n|}\;\xrightarrow{n\rightarrow\infty}\;1
  \qquad\text{ for each }\phi\in\group\;.
\end{equation}
Such a sequence is called a \Folner sequence, and a group that contains
a \Folner sequence is called {amenable} (\cref{sec:folner}).
One can show that, if a limit point ${\bar{x}:=\lim_{n\to\infty}\empavg_{i(n)}(x)}$ along some subsequence
${i(1)<i(2)<\ldots}$ exists, then $\bar{x}$ is a $\group$-invariant element of $\xspace$.
We combine this with the fact that, if $f$ is $\group$-invariant, its sublevel sets ${[f\leq t]}$ are closed, convex and $\group$-invariant sets. It follows that
\begin{equation*}
  \group(x)\;\subset\;[f\leq f(x)]
  \quad\text{ and therefore }\quad
  \bar{x}\;\in\;[f\leq f(x)]
  \qquad\text{ for each }x\in\xspace\;,
\end{equation*}
since $\bar{x}$ is a limit of convex combinations.
If ${[f\leq f(x)]}$ is also compact, we also know that \eqref{summation:trick:Folner} has a convergent subsequence,
and our problem is solved: If $x$ is a minimizer, it is in the 
sublevel set ${\argmin f=[f\leq\min f]}$. This sublevel set also contains the invariant element $\bar{x}$, which is therefore again a minimizer.
\footnotemark
\footnotetext{
  Some of these ideas go back a long way: Amenability was first applied to optimization problems by
  G.~Hunt and C.~Stein, and the compact sublevel set argument is used, at least implicitly, by L.~Le Cam.
  The definition of amenability via \Folner sequences is not common in statistics, but is a staple of modern 
  ergodic theory. \cref{sec:related} provides details and references.
}
\\[.5em]
{\bf Approach}.
We now separate the geometry of the problem from the specific choice of $f$ as follows. Since $\bar{x}$ is a limit of convex combinations, it is always in the set
\begin{equation}
  \label{orbitope}
  \orb(x)\;:=\;
  \text{closed convex hull of }\group(x)\text{ in }\xspace\;.
\end{equation}
We call $\orb(x)$ the \kword{orbitope} of $x$.\footnotemark
\footnotetext{The term orbitope is due to \citet*{Sanyal:Sottile:Sturmfels:2011}.
  Our definition specializes to theirs if $\group$ is compact and $\xspace$ Euclidean.
  See \cref{sec:related} for references on the Euclidean case.}
This is the smallest closed convex $\group$-invariant set containing $x$,
and also contains the limit $\bar{x}$ if it exists. It therefore satisfies
\begin{equation}
  \label{intro:sublevel:inclusion}
  \bar{x}\;\in\;\orb(x)\;\subset\;[f\leq f(x)]
  \qquad\text{ for each }x\in\xspace\;\;,
\end{equation}
but does not depend on $f$. All arguments above can now be applied to $\orb(x)$ instead of the sublevel sets.
That this approach turns out to be useful is largely for two reasons:
\begin{itemize}
\item It separates the question of invariant elements from the properties of $f$---observe
  that $\orb(x)$ in \eqref{intro:sublevel:inclusion} does not depend on $f$.
  To ensure the a limit $\bar{x}$ exists, we do not need the sublevel sets to be compact; it suffices that
  $\orb(x)$ is compact.
\item The combination of amenable invariance and convexity endows orbitopes with a lot of structure.
  These structural properties can then be used to reason about
  optimization problems.
\end{itemize}
Additionally adopting the definition of amenability via \Folner sequences---which departs from common practice in statistics,
see \cref{sec:related}---clarifies the relationship between amenability and \eqref{summation:trick},
and greatly simplifies a number of arguments.
\\[.5em]
{\noindent\bf Result summary}.
In \cref{sec:day}, we establish basic properties of orbitopes.
One of our main technical tools is \cref{theorem:day}: If $\group$ is amenable and 
$f$ is a convex lsc and $\group$-invariant function, each compact orbitope $\orb(x)$ contains a
$\group$-invariant element such that
\begin{equation}
  \label{intro:day:minimizer}
  f(\bar{x})\;\leq\; f(x)\qquad\text{ and }\qquad \empavg_{i(n)}(x)\;\xrightarrow{n\to\infty}\;\bar{x}\;,
\end{equation}
where the convergence holds along a subsequence (in the sense that this subsequence exists, but is not known).
For any compact convex $\group$-invariant set $K$, we also have
\begin{equation}
  \label{intro:day:K}
  \inf\braces{f(z)\,|\,z\in K}\;=\inf\braces{f(z)\,|\,z\in K\text{ and $z$ is $\group$-invariant}}\;,
\end{equation}
and if $f$ is linear, the infima are attained at extreme points.

Since $\xspace$ is a vector space, it has a dual space $\yspace$. We see in \cref{sec:duality}
that a group acting on $\xspace$ induces a dual action on $\yspace$, and that orbitopes
of such dual actions have interesting duality properties.
A useful consequence of \eqref{intro:day:minimizer} is that, 
if ${E\subset\xspace}$ and ${F\subset\yspace}$ are suitable invariant sets, if $H(\argdot|E)$ is the
support function of $E$, and if the problem
\begin{equation}
\label{intro:minimize:H}
  \text{minimize }\;H(y|E)\quad\text{ subject to }y\in F
\end{equation}
has a minimizer, it also has a $\group$-invariant solution (\cref{HS:general}).
If $\xspace$ is a Banach space whose norm is invariant under $\group$, and $\orb(x)$ is weakly compact,
\cref{result:contraction} shows that the image $\empavg_n(\orb(x))$ contracts around an invariant element $\bar{x}$,
\begin{equation}
  \label{intro:contraction}
  \|\empavg_n(\orb(x))\|\;\xrightarrow{n\to\infty}\;0
  \qquad\text{ and }\qquad
  \bar{x}\in\empavg_n(\orb(x))\quad\text{ for all }n\in\mathbb{N}\;.
\end{equation}
This makes \eqref{intro:day:minimizer} constructive, as convergence now holds for
the entire sequence rather than an (unknown) subsequence.

Some properties of orbitopes become more concrete in specific spaces. 
In \cref{sec:hilbert:Lp,sec:probatopes}, we study three cases:
\begin{itemize}
\item The geometry of orbitopes in Hilbert spaces (\cref{sec:hilbert}) most closely resembled that of the
  convex hulls of orbits in Euclidean space studied in \citep{Sanyal:Sottile:Sturmfels:2011}. All orbitopes are weakly compact,
  and loosely speaking are contained in closed discs orthogonal to an axis consisting of all $\group$-invariant elements.
  The contraction property \eqref{intro:contraction} implies the mean ergodic theorem.
\item $\L_p$ spaces (\cref{sec:L1}) are, in a sense, the simplest spaces with non-trivial duality, i.e.\ where
  $\xspace$ and its dual cannot be identified. As an example of \eqref{intro:minimize:H}, we consider the
  Hunt-Stein theorem, see \cref{sec:hunt:stein}.
\item In \cref{sec:probatopes}, we consider orbitopes in the set $\P$ of probability measures.
  There are two natural topologies, convergence in distribution and in total variation.
  Both require some additional work, since duality is not directly applicable.
\end{itemize}
Among these, Hilbert spaces and $\P$ are, informally, the most and least similar to Euclidean space, and
orbitopes in $\P$ have distinctly non-Euclidean geometry.
\\[.5em]
{\noindent\bf Applications}.
We consider two applications in detail, one in Hilbert space and one in $\P$. In \cref{sec:mmd}, we consider
kernel mean embeddings, which are used in machine learning to represent probability distributions 
\citep{Sejdinovic:Sriperumbudur:Gretton:Fukumizu:2013}. Since the embedding space is Hilbert,
mean embeddings satisfy a strong combination of \eqref{intro:day:minimizer}, \eqref{intro:minimize:H}
and \eqref{intro:contraction},
see \cref{::mmd:day}. We characterize
the convex set of $\group$-invariant mean embeddings by a property reminiscent of 
de Finetti's theorem (\cref{result:mmd:extremal}).

In \cref{sec:mk}, we consider couplings of two $\group$-invariant probability measures $P_1$ and $P_2$.
Such couplings need not be invariant. There is hence a set $\Lambda$
of all couplings, and a subset ${\Lambda_\group\subsetneq\Lambda}$ of $\group$-invariant ones.
\cref{result:extremal:couplings} characterizes the extreme points of $\Lambda_\group$.
By the Monge-Kantorovich theorem, the expectation
$P(c)$ of a suitable cost function $c$ under a coupling $P$ satisfies
\begin{equation*}
  \min\braces{P(c)\,|\,P\in\Lambda}
  \;=\;
  \sup\braces{P_1(g_1)+P_2(g_2)|(g_1,g_2)\in\Gamma}\;,
\end{equation*}
where $\Gamma$ is a certain class of minorants of $c$. Combining \cref{result:extremal:couplings}
and \eqref{intro:day:K}
shows that, if $c$ is $\group$-invariant, one can restrict $\Lambda$ to $\Lambda_\group$ and
$\Gamma$ to invariant minorants without introducing a duality gap (\cref{result:kantorovich}).
If one adopts the transportation interpretation of couplings, this means that
if supply, demand and transportation cost
are invariant, there is an optimal transportation plan
that is also invariant, and
neither sellers nor buyers have anything to gain by setting price functions
that resolve non-invariant details. \cref{example:transport} relates this fact to work by
\citet{McGoff:Nobel:2020} on coupled dynamical systems.
\\[.5em]
{\noindent\bf Cocycles}. All results above consider elements of $\xspace$ that are invariant under $\group$.
In applications, one is often interested in different symmetry properties; equivariance and skew-invariance
are common examples. \cref{sec:cocycles} introduces a simple trick using an algebraic
structure called a cocycle. If a symmetry property can be expressed as a cocycle, it can be expressed
as invariance under a surrogate action, which makes our results applicable.

\newpage

\section{Notation and terminology}
\label{sec:preliminaries}
Throughout, we work with a Polish space $\Omega$, a topological group $\group$, and a topological
vector space $\xspace$, which is always a locally convex Hausdorff space.
The elements of $\xspace$ are often functions or measures on $\Omega$.
We write $\P(S)$ for the set of Radon probability measures on a topological space $S$.
If $\mu$ is a measure and $f$ a measurable map, ${f_*\mu:=\mu\circ f^{-1}}$ denotes the image measure.
\\[.5em]
{\bf Convention}. We call $\group$ \kword{nice} if its topology is locally compact and Polish
(equivalently, if it is locally compact, second-countable and Hausdorff).
On a nice group, there exists an invariant $\sigma$-finite measure, or Haar measure, which we
denote $|\argdot|$.
\\[.5em]
\emph{Actions}. An action $T$ of $\group$ on $\Omega$ is a map ${\group\times\Omega\to\Omega}$ that satisfies
\begin{equation*}
   T(\phi\psi,\omega)=T(\phi,T(\psi,\omega))\quad\text{ and }\quad T(\text{identity},\omega)=\omega
   \qquad\text{ for }\phi,\psi\in\group\text{ and }\omega\in\Omega\;.
\end{equation*}
We shorten notation to ${\phi(\omega):=T(\phi,\omega)}$.
The action is \kword{continuous} if $T$ is jointly continuous, \kword{measurable} if $T$ is jointly measurable,
and \kword{linear} if the map ${T(\phi,\argdot)}$ is linear for each $\phi$.
The action on $\Omega$ induces an action on the vector space of real-valued functions $f$ on $\Omega$,
and, if it is measurable, on the vector space of signed measures $\mu$ on $\Omega$, which we have already
defined in \eqref{transforming:functions}. These actions cohere with each other:
Since a function $h$ is $\phi_*\mu$-integrable iff ${h\circ\phi}$ is $\mu$-integrable, and
\begin{equation}
  \label{eq:image:measure:int}
  \mint h d(\phi_*\mu)\;=\;\mint(h\circ\phi)d\mu
  \qquad\text{ or in short }\qquad
  \phi_*\mu(h)\;=\;\mu(h\circ\phi)\;.
\end{equation}
Function or measures are \kword{$\group$-invariant} if they satisfy ${h\circ\phi=h}$ or ${\phi_*\mu=\mu}$ for all ${\phi\in\group}$,
that is, if they are invariant under the actions in \eqref{transforming:functions}.
A function with two arguments is called \kword{diagonally} $\group$-invariant if
${h(\phi\upsilon,\phi\omega)=h(\upsilon,\omega)}$ holds for all ${\phi\in\group}$ and all
${\upsilon,\omega\in\Omega}$. It is \kword{separately} $\group$-invariant if 
${h(\phi\upsilon,\psi\omega)=h(\upsilon,\omega)}$ for all pairs ${\phi,\psi\in\group}$.
Clearly, separate implies diagonal invariance.

\section{The basic objects}
\label{sec:day}

All results in the following involve two basic types of objects, a general form of the partial group average
$\empavg_n(x)$ in \eqref{summation:trick:Folner}, which we define next, and orbitopes. \cref{theorem:day}
below relates the two to each other.

\subsection{\Folner averages}
\label{sec:folner}

To generalize \eqref{summation:trick:Folner} to uncountable groups,
we approximate a possibly non-compact $\group$ from within by compact subsets.
A \kword{\Folner sequence} is a sequence ${\A_1,\A_2,\ldots}$ of compact subsets of $\group$ such that
\begin{align}
  \label{eq:folner}
  {|\A_n\cap K\A_n|}\,&/\,{|\A_n|}\;\xrightarrow{n\rightarrow\infty}\;1\qquad\text{ for all compact }K\subset\group\;,
\end{align}
where $K\A_n$ is the set $\braces{\phi\psi|\phi\in K,\psi\in\A_n}$.
A nice group contains a \Folner sequence if and only if it is \kword{amenable}
\citep{Bekka:delaHarpe:Valette:2008,Grigorchuk:Harpe:2017}.
If $\group$ is countable, compact sets are finite, and a sequence is a \Folner sequence if
and only if it satisfies \eqref{eq:Folner:countable}.
\begin{example}
  (i) Let $\mathbb{S}_n$ be the group of permutations of $n$ elements. Then
  ${\mathbb{S}=\cup_{n\in\mathbb{N}}\mathbb{S}_n}$ is the (countable) group of finitely supported permutations of $\mathbb{N}$,
  and $(\mathbb{S}_n)$ is a \Folner sequence in
  $\mathbb{S}$, since every ${\phi\in\mathbb{S}}$ satisfies ${\phi\mathbb{S}_n=\mathbb{S}_n}$ for $n$ large enough.
  \\[.2em]
  (ii) If $\group$ is a normed vector space with addition as group operation, and $\A_n$ is the closed norm ball of radius
  $n$ around the origin, $(\A_n)$ is a \Folner sequence.
  \\[.2em]
  See \cref{appendix:amenable} for more examples of amenable groups.
\end{example}
An average in the (possibly infinite-dimensional) space $\xspace$ is an $X$-valued integral.
By the integral of a measurable function ${f:S\to\xspace}$ on some measure space $(S,\mu)$, we mean
the unique element ${\mu(f)=\int fd\mu}$ of $\xspace$ that satisfies
\begin{equation*}
  \ell(\mu(f))\;=\;\int_{S}(\ell\circ f)d\mu
  \qquad\text{ for every continuous linear }\ell:X\to\mathbb{R}\;,
\end{equation*}
where the integral on the right is the real-valued Lebesgue integral.
If this integral exists, it is unique.\footnote{The integral
$\mu(f)$ is known as the weak integral, the Pettis integral, or as the Gelfand integral
if $X$ has a weak* topology. If $\xspace$ is a Banach space and ${\int\|f\|d\mu<\infty}$,
$\mu(f)$ coincides with the strong (or Bochner) integral. If $\xspace$ is Euclidean,
it reduces to the Lebesgue integral. See \citep{Aliprantis:Border:2006} for more on such integrals.} 
Given a \Folner sequence, we define the \kword{\Folner average}
\begin{equation}
  \label{eq:Fn}
  \empavg_n(x)\;:=\;\mfrac{1}{|\A_n|}\mint_{\A_n}\phi(x)|d\phi|\;,
\end{equation}
provided the integral exists. (In terms of the definition above, this means we choose $S$ as $\A_n$ and
${f(\phi):=\phi x}$.)

\begin{example}
  (i) \Folner averages under finite groups are of the form \eqref{summation:trick}.
  \\[.2em]
  (ii) The sample average ${n^{-1}\sum_{i\leq n}g(\omega_i)}$ of a statistic $g$ over observations ${\omega_1,\ldots,\omega_n}$ is a \Folner average in disguise: Define a function $x$ on infinite sequences ${\omega=(\omega_1,\omega_2,\ldots)}$ as
  ${x(\omega):=g(\omega_1)}$. Then
  \begin{equation*}
    \mfrac{1}{n}\msum_{i\leq n}g(\omega_{i})
    \;=\;
    \mfrac{1}{|\mathbb{S}_n|}\msum_{\phi\in\mathbb{S}_n}g(\omega_{\phi(1)})
    \;=\;
    \mfrac{1}{|\mathbb{S}_n|}\mint_{\mathbb{S}_n}x(\phi\omega)|d\phi|
    \;=\;
    (\empavg_n(x))(\omega)
  \end{equation*}
  (iii) Window estimators for time series and random fields are \Folner averages over shift
  groups, and subgraph counts in network analysis over permutation groups
  \citep{Austern:Orbanz:2022}. In other words, one can generalize the sample average
  in (ii) by changing the group and the action. These generalized sample averages
  have a law of large numbers \citep{Lindenstrauss:2001} and a central limit theorem
  \citep{Austern:Orbanz:2022}.
  \\[.2em]
  (iv) The $\group$-invariant solution to an invariant testing
  problem in the Hunt-Stein theorem is always a limiting \Folner average,
  as we will see in \cref{sec:hunt:stein}.
\end{example}

\begin{figure}
  \makebox[\textwidth][c]{
    \begin{tikzpicture}
      \begin{scope}[xshift=0]
        \node at (0,0) {\includegraphics[height=2cm]{./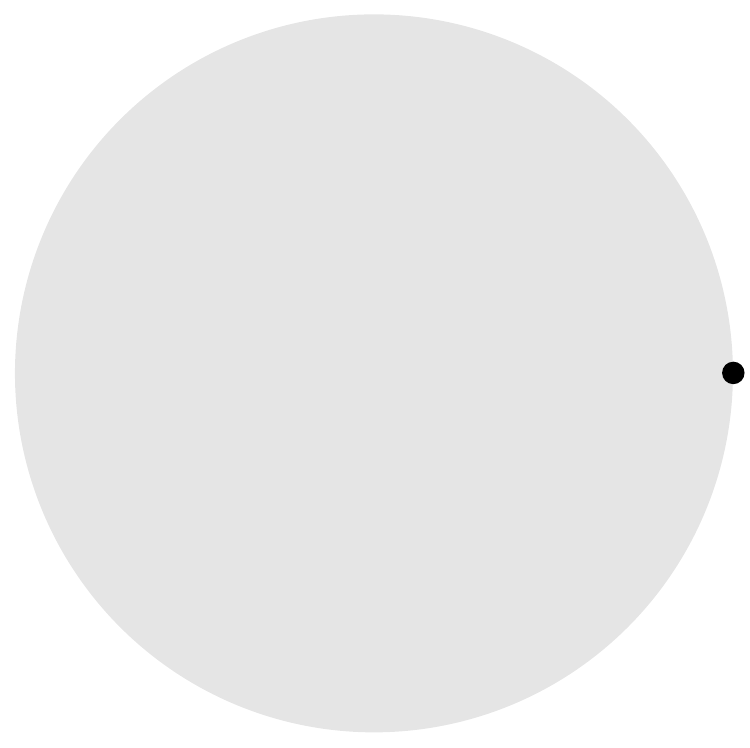}};
        \node[scale=.15,gray,fill,circle] at (0,0) {};
        \node at (1.3,0) {\scriptsize $x$};
      \end{scope}
      \begin{scope}[xshift=3.5cm]
        \node at (0,0) {\includegraphics[height=2cm]{./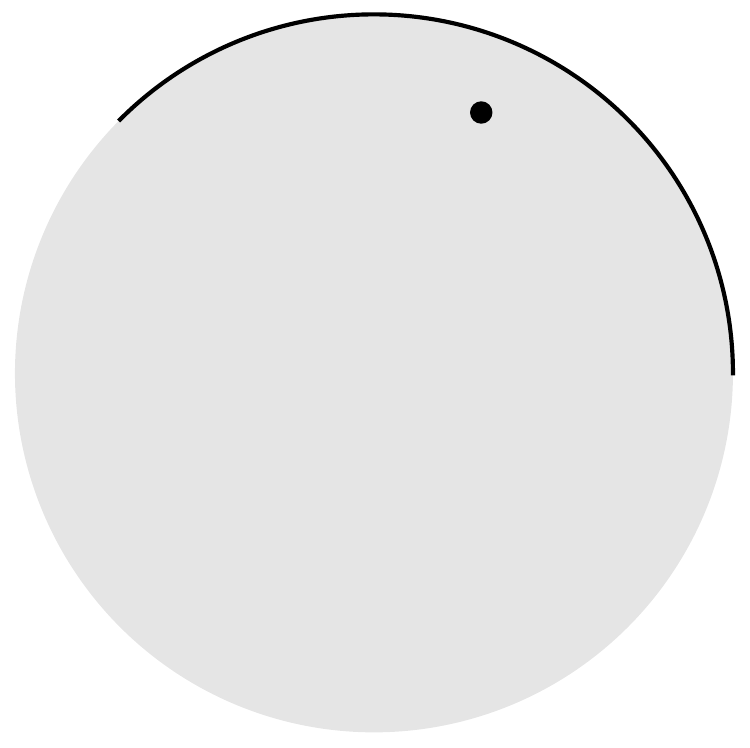}};
        \node[scale=.15,gray,fill,circle] at (0,0) {};
        \node at (0,-1.3) {\scriptsize ${\A_1=[0,3\pi/4]}$};
      \end{scope}
      \begin{scope}[xshift=7cm]
        \node at (0,0) {\includegraphics[height=2cm]{./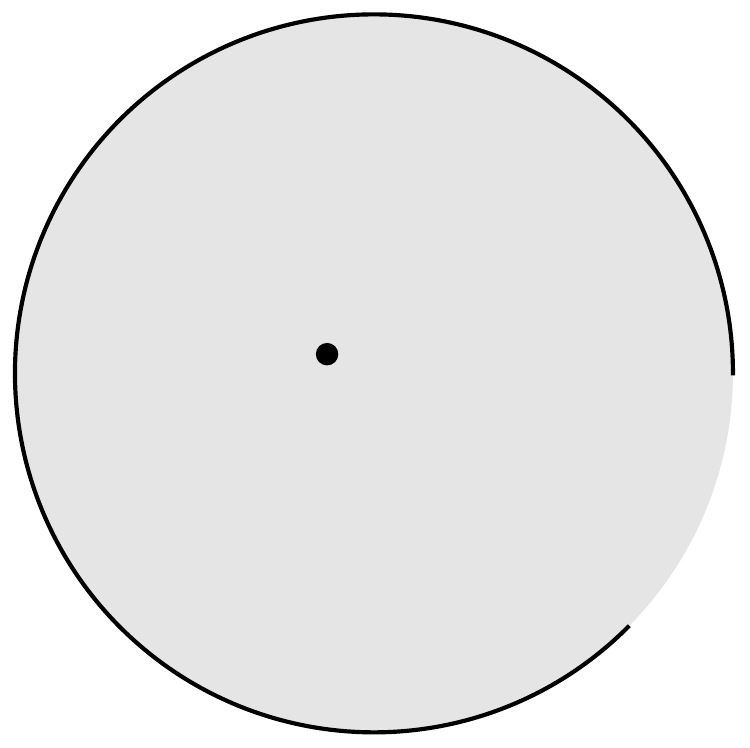}};
        \node[scale=.15,gray,fill,circle] at (0,0) {};
        \node at (0,-1.3) {\scriptsize ${\A_2=[0,7\pi/4]}$};
      \end{scope}
      \begin{scope}[xshift=10.5cm]
        \node at (0,0) {\includegraphics[height=2cm]{./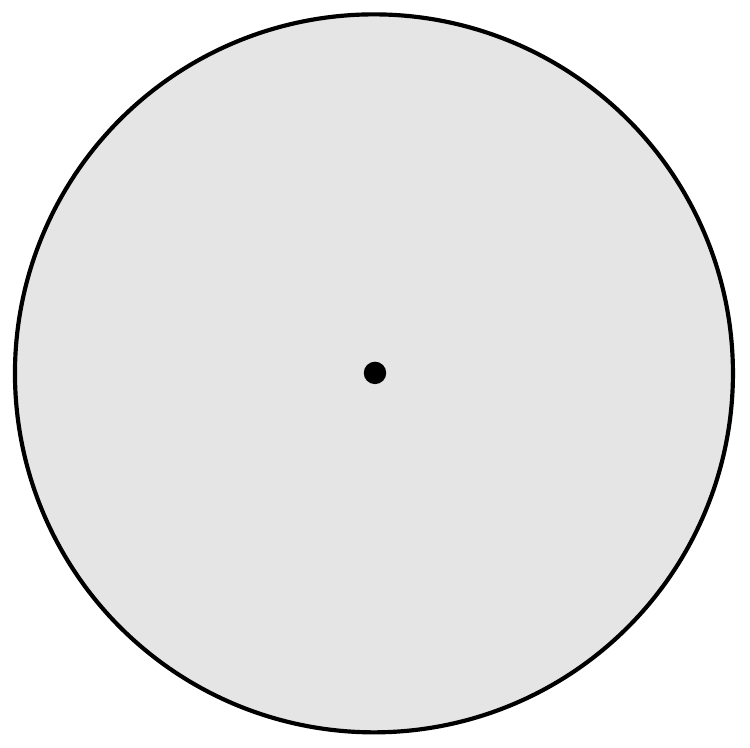}};
        \node at (0,-1.3) {\scriptsize ${\A_3=[0,2\pi]=\group}$};
      \end{scope}
    \end{tikzpicture}
  }
  \caption{
    Finite-dimensional \Folner averages in the (trivial) case where $\group$ is compact.
    Here, the rotation group $\group$ acts on ${\xspace=\mathbb{R}^2}$.
    \emph{Left}: A point $x$ and its orbitope (the gray disc).
    \emph{Middle left/right}: The average $\empavg_n(x)$ is the barycenter of the
    uniform distribution on the black line segment $\A_n(x)$. It is not an element of the orbit $\group(x)$, but
    is in $\orb(x)$.
    \emph{Right}: If ${\A_n=\group}$ the barycenter $\empavg_n(x)$ is $\group$-invariant.
  }  
  \label{fig:empavg}
\end{figure}

\subsection{Orbitopes}

We define the orbitope of ${x\in\xspace}$ as in \eqref{orbitope}. The next result collects some  basic properties.
Recall that the \kword{barycenter} of a Radon probability measure $P$ on $\xspace$ is the unique element $\sp{P}$ of $\xspace$
that satisfies
\begin{equation*}
  P(\ell)\;=\;\ell(\sp{P})\qquad\text{ for every linear continuous }\ell:\xspace\rightarrow\mathbb{R}\;,
\end{equation*}
provided such an element exists \citep{Phelps:2001}. This is the integral ${\sp{P}=\int xP(dx)}$, if the integral exists.
If $M$ is a set of Radon measures, $\sp{M}$ denotes its set of barycenters.
\begin{lemma}
  \label{lemma:orbitope:invariant}
  If $\group$ acts linearly and continuously on $X$, the following holds:
  \\[.5em]
  (i) Each orbitope is a $\group$-invariant set, and so are its interior and boundary.
  A closed convex $\group$-invariant set contains $x$ if and only it contains $\orb(x)$.
  \\[.5em]
  (ii)
  In a Banach space, $\orb(x)$ is norm compact (resp.~weakly compact) if and only if the norm closure (resp.~weak closure)
  of the orbit $\group(x)$ is compact.
  \\[.5em]
  (iii) If $\orb(x)$ is compact, its extreme points lie in the closure of $\group(x)$, and
  \begin{equation*}
    \Pi(x)\;=\;\sp{\P(\text{\rm closure}(\group x))}\;=\;\sp{\P(\Pi(x))}
  \end{equation*}
  (iv)
  The orbitope of a convex combination ${x=\sum c_ix_i}$ of vectors ${x_1,\ldots,x_n\in\xspace}$ is the set
  ${\orb(x)\;=\;\braces{\,\sum c_iz_i\,|\,z_i\in\orb(x_i)}}$.
\end{lemma}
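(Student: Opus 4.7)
The strategy for all four parts is to exploit the universal property of $\orb(x)$ as the smallest closed convex $\group$-invariant set containing $x$, together with standard theorems on compact convex sets in locally convex Hausdorff spaces.

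For (i), each $\phi\in\group$ is a continuous linear bijection whose inverse $\phi^{-1}\in\group$ is also continuous, hence $\phi$ is a linear homeomorphism of $\xspace$. Such maps commute with the closed convex hull operator, so
\[
  \phi(\orb(x))\;=\;\phi(\cch \group(x))\;=\;\cch(\phi\group(x))\;=\;\cch \group(x)\;=\;\orb(x),
\]
using that the orbit is itself $\group$-invariant. Being a homeomorphism, $\phi$ also preserves interior and boundary, which are therefore $\group$-invariant. For the ``iff'' statement, any closed convex $\group$-invariant $C\ni x$ must contain $\group(x)$ by invariance, and hence contain $\cch\group(x)=\orb(x)$; the converse is trivial since $x\in\orb(x)$.

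For (ii), I would invoke Mazur's theorem in the norm topology (the closed convex hull of a norm-compact subset of a Banach space is norm-compact) and the Krein--\v{S}mulian theorem in the weak topology (the closed convex hull of a weakly compact subset of a Banach space is weakly compact). These yield the forward direction in each case, since $\orb(x)=\cch\overline{\group(x)}$. The converse is immediate: the relevant closure of $\group(x)$ is a closed subset of $\orb(x)$ in either topology, and closed subsets of compact sets are compact.

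For (iii), Milman's theorem (the converse of Krein--Milman) applied to $\orb(x)=\cch\overline{\group(x)}$ shows that all extreme points lie in $\overline{\group(x)}$. The barycenter identities follow from the standard representation: for any compact subset $K$ of a locally convex Hausdorff space, $\cch K=\sp{\P(K)}$. One direction is Hahn--Banach (barycenters of measures on $K$ lie in $\cch K$); the other is approximation of finite convex combinations $\sum\lambda_i\delta_{z_i}$ by weak-$*$ limits, using that $\P(K)$ is compact by Prokhorov. Applying this result with $K=\overline{\group(x)}$ and $K=\orb(x)$ yields both equalities.

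For (iv), set $C=\sum_i c_i\orb(x_i)$, the Minkowski sum. Convexity, $\group$-invariance, and $x\in C$ are routine: convexity from splitting convex combinations using convexity of each $\orb(x_i)$, invariance from $\phi\sum c_iz_i=\sum c_i\phi(z_i)$, and $x\in C$ from the choice $z_i=x_i$. The main obstacle is closedness of $C$, which in general fails for Minkowski sums of closed sets; however, under compactness of each $\orb(x_i)$---the setting of interest---$C$ is the image of the compact product $\prod_i\orb(x_i)$ under the continuous linear map $(z_i)\mapsto\sum c_iz_i$, and hence compact. The universal property of (i) then yields $\orb(x)\subseteq C$. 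The reverse inclusion asks that every $\sum c_iz_i\in C$ lie in $\orb(x)$; I expect this to be the most delicate step, requiring one to realise each such element as a limit of convex combinations of the orbit $\group(x)$ itself and potentially leaning on additional structure of how the $x_i$ sit in relation to $x$.
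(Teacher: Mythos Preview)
Your treatment of (i)--(iii) matches the paper essentially line for line: the paper also uses that each $\phi$ is a linear homeomorphism commuting with $\cch$, invokes Mazur (via completeness of the norm topology) and Krein--\v{S}mulian for (ii), and appeals to Choquet theory (Milman plus the barycenter representation $\cch K=\sp{\P(K)}$) for (iii).

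For (iv) the paper takes a different route from yours. Rather than sandwiching via the universal property, it asserts the Minkowski-sum identity
\[
  \group\Bigl(\tsum_i c_ix_i\Bigr)\;=\;\tsum_i c_i\,\group(x_i)
\]
at the level of orbits, and then passes to convex hulls and closures on both sides. This would settle both inclusions at once and would not need compactness of the $\orb(x_i)$.

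Your hesitation about the reverse inclusion $C\subseteq\orb(x)$ is well placed, however. The orbit identity above holds only as $\subseteq$ in general: the left side is the diagonal set $\{\sum_i c_i\phi x_i:\phi\in\group\}$, whereas the right side allows independent group elements $\{\sum_i c_i\phi_ix_i:\phi_i\in\group\}$. A concrete failure is $\group=\{\pm 1\}$ acting by scalar multiplication on $\mathbb{R}^2$ with $x_1=e_1$, $x_2=e_2$, $c_1=c_2=\tfrac12$: then $\orb(x)$ is the diagonal segment from $(-\tfrac12,-\tfrac12)$ to $(\tfrac12,\tfrac12)$, while $\tfrac12\orb(x_1)+\tfrac12\orb(x_2)=[-\tfrac12,\tfrac12]^2$. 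So the inclusion $\orb(x)\subseteq\sum_i c_i\orb(x_i)$ that you establish is correct, but equality---and hence statement (iv) as written---fails without further hypotheses. Your instinct not to force the reverse inclusion was the right one; the paper's argument for that direction does not go through as stated.
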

\begin{proof}
  See \cref{proofs:sec:day}.
\end{proof}
The orbits of $\group$ form a partition of $X$ into disjoint sets, and every $\group$-invariant set is a disjoint
union of orbits. The orbits of $\group$ are therefore the smallest $\group$-invariant subsets of $X$.
By \cref{lemma:orbitope:invariant}, orbitopes are similarly the smallest closed convex $\group$-invariant
sets (although they are not generally mutually disjoint).
In particular, an element ${z\in\xspace}$ satisfies ${z\in\Pi(x)}$ if and only if ${\Pi(z)\subset\Pi(x)}$.  
It follows that any two orbitopes $\orb_1$ and $\orb_2$ of the same action in $X$ satisfy
\begin{equation*}
  \orb_1\subset\orb_2
  \quad\text{ or }\quad
  \orb_1\supset\orb_2
  \quad\text{ or }\quad
  \orb_1\cap\orb_2=\varnothing\;.
\end{equation*}

\begin{example} \cref{fig:day} gives a simple finite-dimensional example of an orbitope, for a compact group.
  As an infinite-dimensional example, consider the set $\P$ of probability measures on $\Omega$, topologized by
  convergence in distribution. The vector space $\xspace$ is the span of $\P$.
  \\[.5em]
  (i) If $\group$ acts continuously on $\Omega$, the induced action \eqref{transforming:functions} on $\P$ is
  continuous. Choose any ${P\in\P}$. The orbit $\group(P)$ is known in statistics as the \kword{group family}
  of $P$. By \cref{lemma:orbitope:invariant}, $\orb(P)$ is the set of mixtures of distributions in the closure
  of $\group(P)$.
  \\[.5em]
  (ii) If the group is compact, the orbits are closed, so $\orb(P)$ consists of the mixtures of orbit elements. For instance,
  let $P_t$ denote the isotropic Gaussian on $\mathbb{R}^d$ with mean $t$, and $\group$ the group
  of rotations around the origin. Then $\group(P_t)$ is the set $\braces{P_{s}|s\in C}$, for the circle $C$ of radius $\|t\|$,
  and ${\orb(P_t)=\braces{\int_{C}P_s\mu(ds)|\mu\in\P(C)}}$.
  \\[.5em]
  (iii) Let $\delta_t$ be a point mass at ${t\in\mathbb{R}^2}$, and let ${\group\cong\mathbb{R}}$ be the group of vertical shifts on the plain.
  The orbit of $t$ is the straight vertical line $L_t$ through $t$, which is an affine subspace, and closed.
  We show in \cref{sec:probatopes} that ${\orb(\delta_t)=\P(L_t)}$, and that it is a closed face of the convex set of probability measures on $\mathbb{R}^2$.
  \\[.5em]
  (iv) Here is a finite-dimensional example, which can be found in \citep{Atiyah:1982}: Consider a vector ${\lambda\in\mathbb{R}^n}$, and let
  $\group$ be the finite group of permutations of the coordinates. One can then show that the orbitope of $\lambda$ is
  \begin{equation*}
    \orb(\lambda)\;=\;\braces{\text{diagonal of }A\,|\,A\in\mathbb{R}^{n\times n}\text{ is Hermitian and has eigenvalues }\lambda_1,\ldots,\lambda_n}\;.
  \end{equation*}
\end{example}

\subsection{A variant of Day's theorem}

\begin{figure}
  \makebox[\textwidth][c]{
    \resizebox{!}{3.5cm}{
\begin{tikzpicture}
  \begin{scope}[scale=.75]
\fill[
  top color=gray!20,
  bottom color=gray!20,
  shading=axis,
  ] 
  (0,4.5) circle (2cm and 0.5cm);
\fill[
  left color=gray!20!white,
  right color=gray!20!white,
  middle color=gray!20,
  shading=axis,
  ] 
  (2,4.5) -- (0,0) -- (-2,4.5) arc (180:360:2cm and 0.5cm) coordinate[near start] (x);
\draw (-2,4.5) arc (180:360:2cm and 0.5cm) -- (0,0) -- cycle;
\draw (-2,4.5) arc (180:0:2cm and 0.5cm);
\draw[dashed] (0,0) -- (0,4.5) ;
\draw[thick] (0,4.5) -- (0,6) ;
\draw[thick] (0,0) -- (0,-.5) ;
\node[circle,black,fill,scale=.4,label=above right:$x$] at (x) {};
  \end{scope}
  \begin{scope}[xshift=6cm,scale=.75]
\fill[
  top color=gray!20,
  bottom color=gray!20,
  shading=axis,
  ] 
  (0,4.5) circle (2cm and 0.5cm);
\fill[
  opacity=0
  ] 
  (2,4.5) -- (0,0) -- (-2,4.5) arc (180:360:2cm and 0.5cm) coordinate[near start] (x);
\draw (-2,4.5) arc (180:360:2cm and 0.5cm);
\draw (-2,4.5) arc (180:0:2cm and 0.5cm);
\draw[dashed] (0,0) -- (0,4.5) ;
\draw[thick] (0,4.5) -- (0,6) ;
\draw[thick] (0,0) -- (0,-.5) ;
\node[circle,black,fill,scale=.4,label=below left:$x$] at (x) {};
\node[circle,black,fill,scale=.4,label=right:$\bar{x}$] at (0,4.5) {};
\end{scope}
\begin{scope}[xshift=12cm,yshift=.5cm,scale=.75]
  \node (t) at (0,4) {};
  \node (b) at (0,0) {};
  \node (x) at (-2,2) {};
  \node (y) at (2,2) {};
  \draw[fill=gray!20!white] (x.center)--(t.center)--(y.center)--(b.center)--cycle;
  \draw[thick,black] (0,4)--(0,5);
  \draw[dashed,black] (0,0)--(0,4);
  \draw[thick,black] (0,-1)--(0,0);
  \node[circle,fill,black,scale=.4,label=left:$x$] at (x) {};
  \draw[thick,black] (x.center)--(y.center);
  \node[circle,fill,black,scale=.4,label=above right:$\bar{x}$] at (0,2) {};
\end{scope}
\end{tikzpicture}
  }}
  \caption{
    Finite-dimensional illustration of some of the sets in \cref{theorem:day}.
    \emph{Left}: A compact set $K$ in which $x$ is extreme. $K$ is invariant under rotations around its middle axis,
    and $K_\group$ is the intersection of this axis and $K$.
    \emph{Middle}: The orbitope $\orb(x)$ is a closed disc that contains a single invariant element ${\bar{x}}$.
    \emph{Right}: A compact set invariant under reflections over the vertical axis. The orbitope of $x$ is the
    intersection of $K$ with the horizontal line through $x$. Even though $x$ is an extreme of $K$, $\bar{x}$ is not.
  }
  \label{fig:day}
\end{figure}
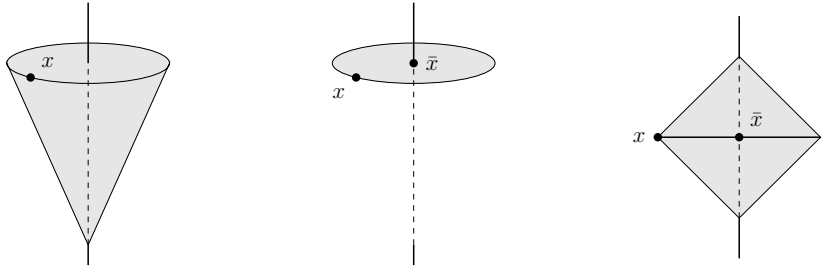

Our next step is to make the relationship between compactness of orbitopes and the existence of $\group$-invariant elements
precise. Compact orbitopes of amenable groups always contain an invariant element. We had already sketched a proof of this
fact in the introduction for countable groups. For general amenable groups, it
follows from a classic fixed-point theorem of M.~M. Day \citep{Day:1961}:
\begin{fact}[Day]
  \label{fact:day}
  If a compact convex set $K$ in a locally convex Hausdorff space is invariant under a linear continuous action of a nice amenable group,
  $K$ has a $\group$-invariant element.
\end{fact}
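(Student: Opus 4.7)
The plan is to produce a $\group$-invariant element of $K$ as a cluster point of Følner averages, using the Følner sequence guaranteed by amenability of $\group$. Fix any $x_0\in K$ and a Følner sequence $(\A_n)$, and consider the averages $\empavg_n(x_0)=|\A_n|^{-1}\int_{\A_n}\phi(x_0)\,|d\phi|$ defined in \eqref{eq:Fn}. Three things must be checked in turn: that these averages exist and lie in $K$; that they admit a cluster point $\bar{x}\in K$; and that this cluster point is $\group$-invariant.

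For the first two, the orbit map $\phi\mapsto\phi(x_0)$ is continuous (the action is jointly continuous) and takes values in the compact $\group$-invariant set $K$. Each continuous linear functional $\ell$ is therefore bounded on $K$, so $\phi\mapsto\ell(\phi x_0)$ is bounded and measurable on $\A_n$ and the Pettis integral $\empavg_n(x_0)$ is well defined; because $K$ is closed and convex and contains every $\phi(x_0)$, the average belongs to $K$. Since $K$ is compact in the Hausdorff topology of $X$, the net $(\empavg_n(x_0))$ admits a convergent subnet with limit $\bar{x}\in K$. One must work with subnets rather than subsequences, as $K$ need not be sequentially compact in a locally convex setting.

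To show invariance, fix $\phi\in\group$ and a continuous linear $\ell:X\to\mathbb{R}$. Linearity of $\phi$ and left-invariance of Haar measure give
\[
\ell\bigl(\phi\empavg_n(x_0)-\empavg_n(x_0)\bigr)\;=\;\tfrac{1}{|\A_n|}\Bigl(\mint_{\phi\A_n}\ell(\psi x_0)|d\psi|-\mint_{\A_n}\ell(\psi x_0)|d\psi|\Bigr),
\]
whose absolute value is at most $M_\ell\,|\phi\A_n\triangle\A_n|/|\A_n|$ with $M_\ell:=\sup_{z\in K}|\ell(z)|<\infty$. The Følner condition \eqref{eq:folner} applied to the compact set $\{\phi\}$ forces $|\phi\A_n\triangle\A_n|/|\A_n|\to 0$, and passing to the subnet of the previous step yields $\ell(\phi\bar{x}-\bar{x})=0$. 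Since $X$ is locally convex Hausdorff, continuous linear functionals separate points, so $\phi\bar{x}=\bar{x}$ for every $\phi\in\group$, and $\bar{x}$ is the desired invariant element.

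The main obstacle is the mismatch between the indexed sequence $(\empavg_n)$ and the topology of $K$: in a locally convex Hausdorff space, $K$ is compact but not necessarily sequentially compact, so one cannot in general arrange the cluster point as a limit along a subsequence, and invariance must be phrased weakly via the point-separating family of continuous linear functionals rather than by a direct topological estimate on $\phi\empavg_n-\empavg_n$. A subsidiary technical check is the Pettis integrability of $\phi\mapsto\phi(x_0)$, which reduces to joint continuity of the action together with compactness of $K$ and follows from the Radon character of Haar measure on $\A_n$.
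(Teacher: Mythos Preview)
Your argument is correct and follows essentially the same Følner-average strategy that the paper uses in its proof of \cref{theorem:day} (which subsumes \cref{fact:day}). The one structural difference is where the compactness step is carried out: you extract a convergent subnet of the averages $\empavg_n(x_0)$ directly in $K$, while the paper instead passes to the probability measures $\eta_n$ on the orbit closure, extracts a limit $\bar\eta$ in $\P(M)$, shows $\bar\eta$ is $\group$-invariant, and then takes its barycenter. The paper's detour through measures buys the Jensen-type inequality \eqref{eq:day} and the representation of $\bar{x}$ as the barycenter of an invariant measure; for the bare existence statement of \cref{fact:day}, your direct route is shorter. You are also more careful than the paper about subnets versus subsequences: in a general locally convex Hausdorff space neither $K$ nor $\P(M)$ need be sequentially compact, so the paper's claim of a convergent \emph{subsequence} in the proof of \cref{theorem:day} tacitly relies on metrizability that is not stated in the hypotheses.
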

The next result refines Day's theorem for the purposes of optimization, and is our main technical tool.
\begin{theorem}
  \label{theorem:day}
  Let a nice group $\group$ act linearly and continuously on a locally convex vector space $\xspace$. Let $(\A_n)$ be a \Folner sequence
  in $\group$, and $x$ a point in $\xspace$.\\[.2em]
  (i) If $\Pi(x)$ is compact, it contains a $\group$-invariant element $\bar{x}$ that satisfies
  \begin{equation}
    \label{eq:day}
    f(\bar{x})
    \;\leq\;
    \sup_{\phi\in\group}f(\phi(x))
    \qquad\text{ for every convex lsc }f:{\orb(x)}\rightarrow\mathbb{R}\cup\braces{\infty}\;.
  \end{equation}
  (ii)
  The integral $\empavg_n(x)$ exists for each ${n\in\mathbb{N}}$. If $\orb(x)$ is compact,
  a subsequence of ${(\empavg_{n}(x))}$ converges to a $\group$-invariant element 
  $\bar{x}$ of $\orb(x)$ that satisfies \eqref{eq:day}.
  \\[.2em]
  (iii) \def\ell{g}
  The set ${\xspace_\group:=\braces{x\in\xspace|x\text{ is $\group$-invariant}}}$ is a closed linear
  subspace of $\xspace$. Any compact, convex and $\group$-invariant set ${K\subset\xspace}$ satisfies
  \begin{equation*}
    \inf_{\;z\,\in\, K\phantom{_\group}}\ell(z)\;=\inf_{z\,\in\,K\cap\xspace_\group}\ell(z)
    \quad\text{ for every $\group$-invariant convex lsc }\ell:K\rightarrow\mathbb{R}\cap\braces{\infty}\;.
  \end{equation*}
  If $\ell$ is linear, both infima are attained at extreme points.
\end{theorem}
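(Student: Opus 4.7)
I would treat (i) as a direct consequence of Day's theorem, upgrade it constructively to (ii) via the \Folner property, and deduce (iii) as a corollary.

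For \textbf{(i)}, by \cref{lemma:orbitope:invariant}(i) the orbitope $\orb(x)$ is closed, convex and $\group$-invariant, and it is compact by hypothesis, so Day's theorem (\cref{fact:day}) supplies a $\group$-invariant $\bar x\in\orb(x)$. For the bound, set $t:=\sup_{\phi\in\group}f(\phi x)$. The sublevel set $\{z\in\orb(x):f(z)\leq t\}$ is closed and convex because $f$ is convex lsc, and it contains the orbit $\group(x)$ by definition of $t$, so it also contains the closed convex hull $\orb(x)$. In particular $f(\bar x)\leq t$.

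For \textbf{(ii)}, existence of $\empavg_n(x)$ follows from continuity of $\phi\mapsto\phi x$ on the compact set $\A_n$: the pushforward of normalized Haar is a compactly supported Radon probability measure on $\xspace$, and the barycenter of such a measure exists in any locally convex Hausdorff space and lies in the closed convex hull of its support, hence in $\orb(x)$. When $\orb(x)$ is compact, the sequence $(\empavg_n(x))$ lies in $\orb(x)$ and has a convergent subsequence $\empavg_{n_k}(x)\to\bar x$. The \Folner property forces $\bar x$ to be invariant: using left-invariance of Haar,
\[
  \phi\empavg_n(x)-\empavg_n(x)\;=\;\mfrac{1}{|\A_n|}\Bigl(\mint_{\phi\A_n\setminus\A_n}\psi x\,|d\psi|\;-\;\mint_{\A_n\setminus\phi\A_n}\psi x\,|d\psi|\Bigr),
\]
so for every continuous linear $\ell:\xspace\to\mathbb{R}$,
\[
  |\ell(\phi\empavg_n(x))-\ell(\empavg_n(x))|\;\leq\;\mfrac{|\phi\A_n\setminus\A_n|+|\A_n\setminus\phi\A_n|}{|\A_n|}\cdot\sup_{\psi\in\group}|\ell(\psi x)|.
\]
The supremum is finite since $\ell$ is continuous and $\group(x)\subset\orb(x)$ is compact, while the prefactor tends to $0$ by \cref{eq:folner} applied to $\{\phi\}$ and $\{\phi^{-1}\}$. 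Hence $\ell(\phi\bar x)=\ell(\bar x)$ for every $\phi$ and every $\ell$; since continuous linear functionals separate points in a Hausdorff locally convex space, $\phi\bar x=\bar x$. The inequality \eqref{eq:day} is then inherited from (i) applied to $\orb(x)$.

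For \textbf{(iii)}, $\xspace_\group=\bigcap_{\phi\in\group}\ker(\phi-\Id)$ is a closed linear subspace because each $\phi$ is linear and continuous. Given $K$, lower semi-continuity of $g$ and compactness of $K$ yield a minimizer $x^*\in K$. Since $K$ is closed, convex and $\group$-invariant, \cref{lemma:orbitope:invariant}(i) gives $\orb(x^*)\subseteq K$, and this orbitope is compact as a closed subset of $K$. Applying (i) produces $\bar x\in\orb(x^*)\cap\xspace_\group\subseteq K\cap\xspace_\group$ with $g(\bar x)\leq\sup_\phi g(\phi x^*)=g(x^*)=\inf_K g$ by invariance of $g$, so the two infima coincide. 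When $g$ is linear, $\argmin_K g$ and $\argmin_{K\cap\xspace_\group}g$ are nonempty compact convex faces of $K$ and $K\cap\xspace_\group$ respectively; each contains an extreme point by Krein--Milman, and an extreme point of a face of a compact convex set is extreme in the ambient set.

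\textbf{Main obstacle.} The delicate step is (ii): one has to handle the Pettis/Gelfand integrals defining $\empavg_n(x)$ rigorously, verify that left-invariance of Haar measure survives the image-measure construction, and then translate the combinatorial \Folner estimate into the topology of $\xspace$ through evaluation against an arbitrary continuous linear functional. The remaining ingredients---Day's theorem, closedness of sublevel sets, and Krein--Milman---are standard.
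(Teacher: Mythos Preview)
Your argument is correct, and parts (i) and (iii) essentially match the paper. The one genuine difference is in (ii): you take a subsequential limit of the \emph{barycenters} $\empavg_n(x)$ inside the compact set $\orb(x)$ and then verify invariance of the limit by pairing against continuous linear functionals. The paper instead works one level up: it pushes normalized Haar on $\A_n$ forward to Radon probabilities $\eta_n$ on the orbit closure $M$, extracts a vague limit $\bar\eta$ in the compact space $\P(M)$, shows $\bar\eta$ is $\group$-invariant via the same \Folner estimate, and only then takes the barycenter $\bar x=\sp{\bar\eta}$. The paper then needs a separate Jensen-type inequality for barycenters (\cref{lemma:jensen}) to get \eqref{eq:day}, whereas your sublevel-set inclusion $\orb(x)\subset[f\le t]$ yields it immediately---and in fact shows \eqref{eq:day} for \emph{every} point of $\orb(x)$, not only the invariant ones. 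Your route is shorter; the paper's route makes the invariant \emph{measure} on the orbit explicit, which is closer to the usual proof of Day's theorem and occasionally useful downstream.

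One small gap: you assert that a compactly supported Radon probability on a locally convex Hausdorff space always has a barycenter. That is false in general; one needs the closed convex hull of the support to be compact (or a completeness hypothesis on $\xspace$). The paper's \cref{lemma:integrals:2} handles this by assuming $\orb(x)$ compact, which forces $\cch\,t_x(\A_n)\subset\orb(x)$ to be compact. This does not affect the substantive part of (ii), since that part is stated under the compactness hypothesis anyway, but the unconditional existence sentence needs that caveat.
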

\begin{proof}
  See \cref{proofs:sec:day}.
\end{proof}
Informally, this implies orbitopes are most useful for optimization if they are compact
(so that $\orb(x)$ contains an invariant element) and small (so that this element is close
to $x$). The size of $\orb(x)$ depends both on the
orbit, and on the choice topology of $\xspace$. This choice involves a trade-off, since
stronger topologies result in smaller orbitopes, whereas weaker topologies have more compact sets.

\section{Duality}
\label{sec:duality}

Let $\yspace$ be the dual space of $\xspace$, that is, the set of all continuous maps
${y:\xspace\rightarrow\mathbb{R}}$. We use the customary notation ${\sp{x,y}:=y(x)}$ for ${x\in X}$
and ${y\in\yspace}$. If $X$ is a normed space, the dual norm on $\yspace$ is ${\|y\|:=\sup\braces{\sp{x,y}|\,\|x\|\leq 1}}$.

\begin{remark*}
  Recall a topology on $\xspace$ (resp.~$\yspace$) is \kword{consistent} with the dual pair $\sp{\xspace,\yspace}$ if it defines
  the same linear functionals as the weak (resp.~weak*) topology. If $\xspace$ is normed, the norm topology
  is consistent, whereas the dual norm topology on $\yspace$ is generally not.  
  It is helpful to keep in mind that all consistent topologies have the same closed convex set and the same convex lsc functionals.
  See \citep{Aliprantis:Border:2006,Borwein:Vanderwerff:2010}.
\end{remark*}
An orbitope in $\xspace$ or $\yspace$ is
\kword{consistent} if it defined by a consistent
topology. Since all consistent topologies have the same closed convex sets, each point has only one consistent orbitope.

\subsection{Dual actions}
Given a linear continuous action of $\group$ on $\xspace$, we define the \kword{dual action}
as the unique action of $\group$ on $\yspace$ that satisfies
\begin{equation}
  \label{dual:action}
  \sp{\phi x,\phi y}\;=\;\sp{x,y}
  \qquad\text{ for each }\phi\in\group\text{ and }(x,y)\in\xspace\!\times\!\yspace\;.
\end{equation}
We then say that $\group$ \kword{acts dually} on $X$ and $Y$.
Since \eqref{dual:action} is equivalent to choosing each map
${\phi:Y\rightarrow Y}$ as the adjoint of the bounded linear operator ${\phi^{-1}:X\rightarrow X}$,
the dual action is linear and weak* continuous. An action on a normed space has \kword{bounded orbits}
if ${\sup_{\phi\in\group}\|\phi x\|<\infty}$ for all ${x\in\xspace}$. It is
\kword{isometric} if it leaves the norm invariant, ${\|\phi x\|=\|x\|}$ for all $\phi$ and $x$.
Clearly, isometric actions have bounded orbits.
\begin{lemma}
  \label{lemma:alaoglu}
  Consider a linear continuous action on a normed space $X$.
  \\[.5em]
  (i) The action has bounded orbits if and only if the dual action had bounded orbits.
  \\[.5em]
  (ii) The action is isometric if and only if the dual action is isometric.
  \\[.5em]
  (iii) If the orbits are bounded, all consistent orbitopes in $\yspace$ are weak* compact.
\end{lemma}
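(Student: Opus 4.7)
The three parts all stem from the identities
\[
  \sp{\phi x, y} \;=\; \sp{x, \phi^{-1} y}
  \qquad\text{and}\qquad
  \sp{x, \phi y} \;=\; \sp{\phi^{-1}x, y},
\]
obtained from \eqref{dual:action} by substituting $\phi^{-1}$ for one of the arguments, together with the fact that $\yspace=\xspace^*$ is always a Banach space (regardless of whether $\xspace$ itself is complete).

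For (i), in the forward direction I would fix $y\in\yspace$ and use the first identity with the bounded-orbit hypothesis to get
\[
  \sup_{\phi\in\group}|\sp{x,\phi y}|
  \;=\;\sup_{\phi\in\group}|\sp{\phi^{-1}x,y}|
  \;\leq\;\|y\|\sup_{\phi\in\group}\|\phi^{-1}x\|
  \;<\;\infty
\]
for each $x\in\xspace$, so that $\braces{\phi y}_\phi$ is pointwise bounded as a family of continuous linear functionals on $\xspace$; Banach-Steinhaus then delivers $\sup_\phi\|\phi y\|<\infty$. For the converse, I would fix $x\in\xspace$ and compute $\sup_\phi|\sp{\phi x,y}|\leq\|x\|\sup_\phi\|\phi^{-1}y\|<\infty$, so that $\braces{\phi x}_\phi$ is weakly bounded in $\xspace$; the uniform boundedness principle applied on $\yspace$ (which is Banach), to the family of functionals $y\mapsto\sp{\phi x,y}$ each of norm $\|\phi x\|$ by Hahn-Banach, yields $\sup_\phi\|\phi x\|<\infty$.

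For (ii), if $\phi^{-1}\colon\xspace\to\xspace$ is a surjective isometry its unit ball is preserved, so
\[
  \|\phi y\|\;=\;\sup_{\|x\|\leq 1}|\sp{\phi^{-1}x,y}|\;=\;\sup_{\|u\|\leq 1}|\sp{u,y}|\;=\;\|y\|.
\]
The reverse direction is symmetric, using the Hahn-Banach formula $\|x\|=\sup_{\|y\|\leq 1}|\sp{x,y}|$ and the fact that the isometric dual action permutes the unit ball of $\yspace$. For (iii), bounded orbits in $\xspace$ plus part (i) give a radius $r$ with $\group(y)\subset B_r$, the closed $r$-ball in $\yspace$; Banach-Alaoglu says $B_r$ is weak* compact, so the weak*-closed convex hull of $\group(y)$ is a weak*-closed subset of a weak*-compact set, hence weak*-compact. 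Since consistent topologies on $\yspace$ share the same closed convex sets (see the remark preceding the lemma), this weak*-closed convex hull is the unique consistent orbitope of $y$.

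\textbf{Main obstacle.} The only delicate point is the forward direction of (i): passing from pointwise boundedness of $\braces{\phi y}$ on $\xspace$ to norm boundedness relies on Banach-Steinhaus and thus implicitly on completeness (or barrelledness) of $\xspace$. The converse of (i) and part (iii) go through cleanly because they only invoke uniform boundedness on $\yspace$, which is always Banach. Parts (ii) and (iii) then reduce to formal manipulations with the duality pairing and Banach-Alaoglu.
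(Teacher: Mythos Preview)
Your proof is correct and follows the same route as the paper's: uniform boundedness for (i), direct substitution into the dual-norm definition for (ii), and Banach--Alaoglu for (iii). The paper packages (i) slightly differently---it observes that bounded orbits means the family $\{\phi:X\to X\mid\phi\in\group\}$ is pointwise bounded, invokes Banach--Steinhaus to get uniform operator-norm boundedness, and then uses that an operator and its adjoint have the same norm---but this is your argument rephrased at the level of operator families rather than individual orbits.

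Your observation about the completeness obstacle is accurate and applies equally to the paper's proof: the result it cites (Aliprantis--Border 6.14) is the Banach--Steinhaus theorem for Banach spaces, so the direction ``bounded orbits in $X$ $\Rightarrow$ bounded orbits in $Y$'' tacitly needs $X$ complete (or barrelled), even though the lemma is stated only for normed $X$. This is a minor imprecision in the paper rather than a flaw in your reading.
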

\begin{proof}
  See \cref{proof:sec:duality}.
\end{proof}

\subsection{Support functions}
The support function of a set $E$ in $X$ or $Y$ is defined as
\begin{equation*}
  H(\argdot|E)\;:=\;
  \begin{cases}
  \;\sup\braces{\sp{\argdot,y}|\,y\in E} & \text{ if }E\subset\yspace\\
  \;\sup\braces{\sp{x,\argdot}|\,x\in E} & \text{ if }E\subset\xspace\\
  \end{cases}\;.
\end{equation*}
This function is convex, and weakly lsc or weak* lsc.
It takes values in $\mathbb{R}\cup\braces{\infty}$, is proper if $E$ is not empty, and is finite if $E$ is compact, see
\citep{Borwein:Vanderwerff:2010,Aliprantis:Border:2006}.
If $\group$ acts dually, substituting \eqref{dual:action} into the definition of $H$ shows that
$H$ characterizes invariance of $E$,
\begin{equation*}
  H(\argdot|E)\text{ is $\group$-invariant }
  \qquad\Leftrightarrow\qquad
  E\text{ is $\group$-invariant.}
\end{equation*}
Since a set and its closed convex hull have the same support function \citep{Borwein:Vanderwerff:2010},
we also have 
\begin{equation}
  \label{eq:H:equals:sup}
  H(\argdot|\orb(y))\;=\;H(\argdot|\group(y))\;.
\end{equation}
This implies the duality
\begin{equation}
  \label{eq:support:function:duality}
  H(x|\orb(y))\;=\;H(y|\orb(x))\qquad\text{ for }(x,y)\in\xspace\!\times\!\yspace\;.
\end{equation}
\cref{theorem:day} implies a simple but useful optimality principle:
\begin{proposition}
  \label{HS:general}
  Let a nice amenable group $\group$ act dually on a nice space $X$
  and its dual $Y$, with bounded orbits. Let ${E\subset X}$ be $\group$-invariant, and
  ${F\subset Y}$ closed, convex and $\group$-invariant. If the
  optimization problem
  \begin{equation*}
    \text{minimize }\quad H(x|E)\qquad\text{ subject to }y\in F
  \end{equation*}
  has a solution $\hat{y}$, the orbitope $\orb(\hat{y})$ contains a $\group$-invariant solution $\bar{y}$, and
  \begin{equation*}
    -H(-x\,|\orb(\hat{y}))\;\leq\;\sp{x,\bar{y}}\;\leq\;H(x\,|\orb(\hat{y}))
    \qquad\text{ for each }x\in E\;.
  \end{equation*}
\end{proposition}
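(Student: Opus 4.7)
The statement is a direct corollary of \cref{theorem:day} applied in $Y$ with its weak* topology; the task is mostly to line up the hypotheses.

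First I would verify that the three ingredients of \cref{theorem:day}(ii) apply to the objective restricted to a suitable compact convex $\group$-invariant set. Since $E\subset X$ is $\group$-invariant and the action is dual, substituting \eqref{dual:action} in the definition of $H$ shows that $H(\argdot|E):Y\to\mathbb{R}\cup\braces{\infty}$ is $\group$-invariant; it is also convex and weak* lsc by the standard properties of support functions recalled in \cref{sec:duality}. Because the action has bounded orbits, \cref{lemma:alaoglu}(iii) says every consistent orbitope in $Y$ is weak* compact; in particular $\orb(\hat y)$ is weak* compact. Finally, $F$ is closed, convex and $\group$-invariant and contains $\hat y$, so by \cref{lemma:orbitope:invariant}(i) we have $\orb(\hat y)\subset F$.

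Next I would invoke \cref{theorem:day}(ii), with $\xspace$ replaced by $Y$ equipped with the weak* topology (a locally convex Hausdorff space on which $\group$ acts linearly and weak* continuously), $x=\hat y$, and $f=H(\argdot|E)$. The theorem yields a $\group$-invariant $\bar y\in\orb(\hat y)$ satisfying
\begin{equation*}
  H(\bar y|E)\;\leq\;\sup_{\phi\in\group}H(\phi\hat y|E)\;=\;H(\hat y|E)\;,
\end{equation*}
the equality being $\group$-invariance of $H(\argdot|E)$. Since $\bar y\in\orb(\hat y)\subset F$ and $\hat y$ is a minimizer on $F$, the reverse inequality also holds, so $\bar y$ is itself a minimizer.

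For the two-sided bound, I would simply unfold the definition of the support function: because $\bar y\in\orb(\hat y)$, for every $x\in E$
\begin{equation*}
  \sp{x,\bar y}\;\leq\;\sup\bigbraces{\sp{x,y}\,|\,y\in\orb(\hat y)}\;=\;H(x|\orb(\hat y))\;,
\end{equation*}
and applying the same inequality to $-x$ gives $-H(-x|\orb(\hat y))\leq\sp{x,\bar y}$.

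\textbf{Main obstacle.} There is essentially no analytic obstacle; the only thing to be careful about is the choice of topology on $Y$. One needs a consistent topology in which $\orb(\hat y)$ is compact and $H(\argdot|E)$ is lsc, and \cref{lemma:alaoglu}(iii) together with the remark on consistent topologies makes the weak* topology the obvious choice. With that pinned down, the result is a clean application of the Day-type principle in \cref{theorem:day}.
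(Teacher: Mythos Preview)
Your proposal is correct and follows essentially the same route as the paper: compactness of $\orb(\hat y)$ from \cref{lemma:alaoglu}, existence of a $\group$-invariant $\bar y\in\orb(\hat y)$ via \cref{theorem:day}, and the two-sided bound by unfolding the support function. The only cosmetic difference is that the paper argues $\orb(\hat y)\subset\argmin_F H(\argdot|E)$ directly from the minimality property of orbitopes among closed convex invariant sets, whereas you reach the same conclusion via the inequality in \eqref{eq:day} together with $\orb(\hat y)\subset F$; both are equivalent one-line observations.
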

\begin{proof}
  The orbitope $\orb(\hat{y})$ is compact, by \cref{lemma:alaoglu}. It contains a $\group$-invariant element $\bar{y}$, by \cref{theorem:day}.
  Since $H(\argdot|E)$ is convex lsc and $\group$-invariant, $\argmin H(\argdot|E)$ is a convex, closed, $\group$-invariant set.
  Since orbitopes are the smallest such sets, ${\hat{y}\in\argmin H(\argdot|E)}$ implies ${\bar{y}\in\orb(\hat{y})\subset\argmin H(\argdot|E)}$.
  To obtain the upper bound, observe that
  \begin{equation*}
    \sp{x,\bar{y}}
    \;\leq\;
    \sup\nolimits_{z\in\orb(\bar{y})}
    \sp{x,z}
    \;\leq\;
    \sup\nolimits_{z\in\orb(\hat{y})}
    \sp{x,z}
    \;=\;
    H(\argdot|\orb(\hat{y}))\;,
  \end{equation*}
  since ${\bar{y}\in\orb(\hat{y})}$ implies ${\orb(\bar{y})\subset\orb(\hat{y})}$.
  The lower bound follows analogously.
\end{proof}

\subsection{Orbitopes and annihilators}
\label{:::annihilators}

In the examples in \cref{fig:empavg,fig:day}, all orbitopes are orthogonal
to the subspace $\xspace_\group$ of invariant elements. The next result shows that this is no coincidence.
To say that $\orb(x)$ is orthogonal to $\xspace_\group$ is equivalent to saying that the shifted orbitope
${\orb(x)-x}$ is in the orthocomplement of $\xspace_\group$. In both figures, $\xspace$ is Euclidean and hence a
Hilbert space. In a general locally convex space, there is no notion of orthogonality, and the
orthocomplement in $\xspace$ generalizes to the annihilator in the dual space,
\begin{equation*}
    \xspace_\group^\perp\;:=\;\braces{y\in\yspace\,|\,\sp{x,y}=0\text{ for all }x\in\xspace_\group}\;.
\end{equation*}
Observe also that the shifted orbitope is the set
\begin{equation*}
  \orb(x)-x
  \;=\;
  \cch(\group(x)-x)
  \;=\;
  \cch\braces{\phi x-x|\phi\in\group}\;.
\end{equation*}
The map ${\phi\mapsto\phi(x)-x}$ is known as a \kword{coboundary} of $x$
\citep[e.g.][]{Bekka:delaHarpe:Valette:2008}.
The set $\orb(x)-x$ is hence the closed convex hull of the image $\group x-x$ of a coboundary.
\begin{lemma}
  \label{::annihilators}
  If $\group$ acts dually on $\xspace$ and its dual $\yspace$, the
  vectors ${\phi y-y}$, for ${\phi\in\group}$ and ${y\in\yspace}$, form a dense subset of
  the annihilator $\xspace_\group^\perp$,
  and the vectors ${\phi x-x}$ similarly lie dense in $\yspace_\group^\perp$. All orbitopes satisfy
  \begin{equation*}
    \orb(x)\;\subset\;\yspace_\group^\perp
    \qquad\text{ and }\qquad
    \orb(y)\;\subset\;\xspace_\group^\perp\;
    \qquad\text{ for }x\in\xspace\text{ and }y\in\yspace\;.
  \end{equation*}
\end{lemma}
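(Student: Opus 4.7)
The argument naturally splits into three steps. First I would check directly that every coboundary lies in the appropriate annihilator, so $\phi y - y \in \xspace_\group^\perp$ and $\phi x - x \in \yspace_\group^\perp$. Second, I would upgrade the former to a density statement via a Hahn-Banach separation argument. Third, the orbitope inclusions follow at once from the first step together with the fact that annihilators are closed and convex.

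For step one, I would unfold the defining identity of the dual action. From $\sp{\phi x, \phi y} = \sp{x,y}$ the substitution $x \mapsto \phi^{-1} x$ yields $\sp{x, \phi y} = \sp{\phi^{-1} x, y}$. For any $x \in \xspace_\group$ the invariance $\phi^{-1} x = x$ then gives $\sp{x, \phi y - y} = 0$, so $\phi y - y \in \xspace_\group^\perp$. Interchanging the roles of $\xspace$ and $\yspace$ gives $\phi x - x \in \yspace_\group^\perp$.

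For step two — the step I expect to require the most care — I would equip $\yspace$ with any topology consistent with the pairing $\sp{\xspace,\yspace}$, so that its continuous dual is exactly $\xspace$ (the Remark before this subsection is used here: all consistent topologies have the same closed subspaces and continuous functionals). Let $V$ be the closed linear span of $\braces{\phi y - y : \phi \in \group,\, y \in \yspace}$. Step one gives $V \subset \xspace_\group^\perp$. If the inclusion were strict, pick $y_0 \in \xspace_\group^\perp\setminus V$; by Hahn-Banach there is $x_0 \in \xspace$ with $\sp{x_0, v} = 0$ for all $v \in V$ and $\sp{x_0, y_0} \neq 0$. Rewriting
\begin{equation*}
  0 \;=\; \sp{x_0, \phi y - y} \;=\; \sp{\phi^{-1} x_0 - x_0,\, y}\qquad\text{ for every }\phi\in\group,\, y\in\yspace,
\end{equation*}
and letting $y$ range over $\yspace$ forces $\phi^{-1} x_0 = x_0$ for every $\phi$, so $x_0 \in \xspace_\group$. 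But then $y_0 \in \xspace_\group^\perp$ yields $\sp{x_0, y_0} = 0$, a contradiction. Hence $V = \xspace_\group^\perp$, which is the claimed density. The dual density of $\braces{\phi x - x}$ in $\yspace_\group^\perp$ is symmetric.

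For step three, the annihilator $\yspace_\group^\perp$ is an intersection of kernels of continuous linear functionals on $\xspace$ (namely $z \mapsto \sp{z, y}$ for $y \in \yspace_\group$), hence it is closed and convex in any consistent topology. Step one gives $\phi x - x \in \yspace_\group^\perp$ for every $\phi$, and the displayed identity $\orb(x) - x = \cch\braces{\phi x - x : \phi\in\group}$ from the paragraph preceding the lemma then forces $\orb(x) - x \subset \yspace_\group^\perp$, i.e.\ $\orb(x) \subset x + \yspace_\group^\perp$. The inclusion for $\orb(y)$ is analogous. The only substantive ingredient anywhere in the proof is the Hahn-Banach step, which uses nothing beyond linearity of the action and the duality pairing — amenability of $\group$ plays no role in this particular lemma.
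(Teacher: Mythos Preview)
Your argument is correct and essentially identical to the paper's: both show that the pre-annihilator (in $\xspace$) of the coboundary set $\{\phi y-y\}$ is exactly $\xspace_\group$ and then pass to the double annihilator---you via an explicit Hahn--Banach separation, the paper by invoking $V^{\perp\perp}=\overline{V}$ directly. Your step three is in fact more careful than the paper's own conclusion: you correctly obtain $\orb(x)-x\subset\yspace_\group^\perp$ (equivalently $\orb(x)\subset x+\yspace_\group^\perp$), which is what the preamble and the paper's proof actually support, whereas the unshifted inclusion $\orb(x)\subset\yspace_\group^\perp$ as literally stated would fail whenever $x\notin\yspace_\group^\perp$.
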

\begin{proof}
  See \cref{proof:sec:duality}.
\end{proof}

\subsection{Contraction under averages}

If $\empavg_n(z)$ is well-defined at each point $z$ in an orbitope $\orb$, we may consider the image set ${\empavg_n(\orb)=\braces{\empavg_n(z)|\,z\in\orb}}$.
If $\orb$ has an invariant element $\bar{x}$, one would expect this image to contract around $\bar{x}$ as $n$ grows.
If $\xspace$ has a norm, we can quantify contraction by the width
\begin{equation*}
  \|\empavg_n(\orb)\|\;:=\;\sup\braces{\|z-z'\|\,|\,z,z'\in\empavg_n(\orb)}\;=\;\sup\braces{\|\empavg_n(x-x')\|\,|\,{x,x'\in\orb}}\;.
\end{equation*}
For isometric actions of amenable groups, orbitopes indeed contract if they are compact.\nolinebreak
\begin{theorem}[Contraction of orbitopes]
  \label{result:contraction}
  Let an amenable group $\group$ act isometrically on a Banach space $X$.
  \\[.5em]
  (i) If an orbitope $\orb$ in $\xspace$ is weakly compact, the
  convex function ${x\mapsto\|\empavg_n(x)\|}$ is well-defined and weakly lsc on $\orb$, and satisfies ${\|\empavg_n(\orb)\|\to 0}$
  as ${n\to\infty}$.
  \\[.5em]
  (ii) The function ${y\mapsto\|\empavg_n(y)\|}$ on the dual space is convex and norm lsc everywhere on $\yspace$. Each point ${y\in\yspace}$,
  and each $z$ in the convex hull of $\group(y)$, satisfy
  \begin{equation}
    \label{eq:contraction:2}
    \|\empavg_n(z-y)\|\;\leq\;\mfrac{|\A_n\vartriangle\phi_z^{-1}\A_n|}{|\A_n|}\|y\|
    \quad\text{ for some ${\phi_z\in\group}$ and all }n\in\mathbb{N}\;.
  \end{equation}
\end{theorem}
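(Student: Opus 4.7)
I would prove (ii) first, since its contraction bound is the main input for (i). For well-definedness of $\empavg_n(y)$ on $\yspace$: the dual action is weak*-continuous and $\|\psi y\|=\|y\|$ by isometry, so $\psi\mapsto\psi y$ is a bounded continuous map on the compact set $\A_n$ and the Gelfand integral exists. Convexity of $y\mapsto\|\empavg_n(y)\|$ follows from linearity of $\empavg_n$ and convexity of the norm; the contraction $\|\empavg_n(y)\|\le\|y\|$ (Jensen applied to the norm) makes $\empavg_n$ norm-continuous, hence $y\mapsto\|\empavg_n(y)\|$ is norm-lsc.

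The inequality \eqref{eq:contraction:2} is the technical heart. For an orbit element $z=\phi y$, I would write
\begin{equation*}
\empavg_n(\phi y - y)\;=\;\frac{1}{|\A_n|}\Big(\int_{\A_n}\psi\phi y\,|d\psi|-\int_{\A_n}\psi y\,|d\psi|\Big),
\end{equation*}
and perform the change of variables $\tau=\psi\phi$ in the first integral. Using unimodularity of nice amenable groups (or passing to a symmetric two-sided Folner sequence so the relevant measures match), the first integral rewrites as $\int_{\phi^{-1}\A_n}\tau y\,|d\tau|$, so the difference reduces to an integral of $\tau y$ over the symmetric difference $\phi^{-1}\A_n\vartriangle\A_n$; bounding by $\|\tau y\|=\|y\|$ yields \eqref{eq:contraction:2} with $\phi_z=\phi$. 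For $z=\sum_i c_i\phi_i y$ in the convex hull, linearity and the triangle inequality give $\|\empavg_n(z-y)\|\leq\sum_i c_i|\A_n\vartriangle\phi_i^{-1}\A_n|/|\A_n|\cdot\|y\|$, bounded by a maximum over the finite set $\{\phi_i\}$; I would select $\phi_z$ from this finite set to dominate the weighted average uniformly in $n$.

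For (i), well-definedness, convexity, and weak lower-semicontinuity of $x\mapsto\|\empavg_n(x)\|$ on the weakly compact $\orb$ run in parallel, with a Pettis integral in place of the Gelfand one and the standard fact that the norm is weakly lsc on every Banach space. For $\|\empavg_n(\orb)\|\to 0$, I would shift by the invariant element $\bar{x}\in\orb$ supplied by \cref{fact:day}; uniqueness of $\bar{x}$ follows because the $\group$-invariance of $\sp{\argdot,y}$ on orbitopes for $y\in\yspace_\group$ forces $\orb-\orb\subset\yspace_\group^\perp$, while $\yspace$ separates $\xspace$ gives $\xspace_\group\cap\yspace_\group^\perp=\{0\}$ (both facts being consequences of \cref{::annihilators}). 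The difference set $\orb-\orb$ is weakly compact, convex and symmetric; by Bauer's maximum principle the convex weakly-lsc function $v\mapsto\|\empavg_n(v)\|$ attains its supremum on $\orb-\orb$ at an extreme point, which by the usual Minkowski-sum argument is a difference of extreme points of $\orb$, hence by \cref{lemma:orbitope:invariant}(iii) a limit of orbit differences. Applying (ii) to these orbit differences, together with the uniform convergence of the Folner ratios on compact subsets of $\group$ guaranteed by \eqref{eq:folner}, then gives $\|\empavg_n(\orb)\|\to 0$.

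The hard part will be bridging \emph{pointwise} contraction---the Folner ratio $|\A_n\vartriangle\phi^{-1}\A_n|/|\A_n|$ tending to zero one $\phi$ at a time---to \emph{uniform} contraction over the weakly compact orbitope, since those ratios do not decay uniformly over all of $\group$. The argument has to interleave extreme-point reduction on the orbitope side with the uniform convergence of the Folner ratios on compact subsets of $\group$. A secondary technical issue is bookkeeping the Haar change of variables on a possibly non-unimodular amenable group, which one circumvents by passing to a symmetric Folner sequence.
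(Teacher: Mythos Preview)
Your treatment of (ii) matches the paper's: compute $\empavg_n(\phi y-y)$ by changing variables in the Haar integral, bound by the F{\o}lner ratio times $\|y\|$, and extend to the convex hull by the triangle inequality. The paper likewise ends up with a maximum over the finitely many $\phi_i$'s appearing in a convex combination, so your worry about selecting a single $\phi_z$ valid for all $n$ is a real bookkeeping point in both treatments.

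For (i), however, the paper takes a considerably simpler route than you do, and your proposed route has gaps.

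\textbf{Lower semicontinuity.} The paper does not argue weak lsc of $x\mapsto\|\empavg_n(x)\|$ by composing weak lsc of the norm with continuity of $\empavg_n$. Instead it first proves the contraction $\|\empavg_n(x)\|\leq\|x\|$ via Jensen for barycenters (using that the norm is convex, weakly lsc, and $\group$-invariant). From this contraction it obtains \emph{norm} lsc directly by the estimate $\|\empavg_n(x)\|\leq\|\empavg_n(x-x_i)\|+\|\empavg_n(x_i)\|\leq\|x-x_i\|+\|\empavg_n(x_i)\|$, and then upgrades to weak lsc using the standard fact that a convex norm-lsc function is weakly lsc. Your ``runs in parallel'' is too quick: weak lsc of the norm alone does not give weak lsc of $\|\empavg_n(\cdot)\|$ without knowing $\empavg_n$ is weak--weak continuous.

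\textbf{Contraction.} The paper does not use Bauer's principle or extreme points at all. Because $\|\empavg_n(\cdot)\|$ is weakly lsc and $\ch\group(x)$ is dense in $\orb(x)$, the supremum over $\orb\times\orb$ equals the supremum over $\ch\group(x)\times\ch\group(x)$, and on this set the bound from (ii) applies verbatim since every element is a \emph{finite} convex combination of orbit points. Your route through extreme points runs into the problem you yourself almost name: extreme points of $\orb$ lie in the \emph{closure} $\overline{\group(x)}$, not in $\group(x)$, so \eqref{eq:contraction:2} does not apply to them directly. The density-plus-lsc reduction sidesteps this entirely.

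Two further issues with your sketch: the uniqueness of $\bar{x}$ via $\xspace_\group\cap\yspace_\group^\perp=\{0\}$ is not established by \cref{::annihilators} in a general Banach space (that argument works in Hilbert space, where annihilators are orthocomplements, but not in general), and in any case the paper's proof neither needs nor asserts uniqueness of the invariant element. And the claim that \eqref{eq:folner} gives \emph{uniform} convergence of the F{\o}lner ratios over compact subsets of $\group$ is not what \eqref{eq:folner} says; it bounds $|\A_n\cap K\A_n|/|\A_n|$ for the product set $K\A_n$, not $\sup_{\phi\in K}|\A_n\vartriangle\phi\A_n|/|\A_n|$.
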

\begin{proof}
  See \cref{proof:sec:duality}.
\end{proof}

\begin{remark*} We note how the behavior of $\empavg_n$ on $\xspace$ differs from that on the dual:\\[.5em]
(i) On $\xspace$, \cref{result:contraction} applies only to points with compact orbitope, but at these, we 
  we obtain a strong statement: The entire sequence ${(\empavg_n(z))}$ converges,
  rather than a subsequence as in \cref{theorem:day}. This holds for all ${z\in\orb}$
  uniformly over $\orb$, and the limits coincide for all $z$. This limit is an invariant element of $\orb$.
  \\[.5em]
  (ii) On $\yspace$, \eqref{eq:contraction:2} holds everywhere, but pairwise rather than uniformly, as $\phi_z$ depends on $z$.
  It also does not extend from ${\ch\group(y)}$ to the closure $\orb(y)$, because
    ${\|\empavg_n(\argdot)\|}$ is not generally weak* lsc.
\end{remark*}

\section{Orbitopes I: Hilbert and Lebesgue spaces}
\label{sec:hilbert:Lp}

This and the following section study properties of orbitopes in specific spaces. The simplest case are Hilbert spaces,
where orbitopes are orthogonal to the subspace of invariant elements, and \cref{result:contraction}
becomes the mean ergodic theorem. We also consider $\L_p$ spaces (which are not Hilbert unless ${p=2}$).
\cref{sec:hunt:stein} uses the Hunt-Stein theorem to illustrate results of \cref{sec:duality}.

\subsection{Orbitopes in Hilbert spaces}
\label{sec:hilbert}

If $\xspace$ is a Hilbert space, it can be identified with its dual, and the dual pairing
becomes an inner product. If $\group$ acts dually, it therefore leaves the inner
product invariant,
\begin{equation}
  \label{H:inner:product:invariant}
  \sp{\phi x,\phi y}=\sp{x,y}
  \quad\text{ and hence }\quad
  \|\phi x\|=\sqrt{\sp{\phi x,\phi x}}=\|x\|
  \quad\text{ for all }\phi\in\group\;.
\end{equation}
Linear actions that satisfy \eqref{H:inner:product:invariant} are called \kword{unitary},
since \eqref{H:inner:product:invariant} makes each map ${\phi:\xspace\to\xspace}$
a unitary linear operator \citep{Einsiedler:Ward:2011}. Since $\xspace$
has an inner product, the annihilator ${\xspace_\group^\perp}$ is the
orthocomplement of $\xspace_\group$, and each element $x$ of $\xspace$ has a unique
decomposition
\begin{equation*}
  x\;=\;\bar{x}\,+\,x^\perp
  \qquad\text{ where }
  \bar{x}\;:=\;\text{projection onto }\xspace_\group 
  \qquad\text{ and }\qquad
  x^{\perp}\;\in\;\xspace_\group^\perp\;.
\end{equation*}
Since the weak and weak* topology coincide in Hilbert spaces, the norm-, weak- and weak* topologies 
define the same orbitopes. 
Orbitopes of unitary actions are contained in closed discs orthogonal to the (possibly infinite-dimensional) axis $\xspace_\group$:
\begin{proposition}
  \label{result:orbitope:unitary:action}
  If $\xspace$ is a Hilbert space and \eqref{H:inner:product:invariant} holds, $\orb(x)$
  is weakly compact and orthogonal to $\xspace_\group$ for each ${x\in\xspace}$. All ${y\in\orb(x)}$ satisfy
  ${\bar{y}=\bar{x}}$ and ${\|y\|\leq\|x\|}$. If $\group$ is amenable, then
  ${\orb(x)\cap\xspace_\group=\braces{\bar{x}}}$.
\end{proposition}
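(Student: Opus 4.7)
The plan is to derive all four assertions directly from unitarity, invoking Lemma~\ref{lemma:alaoglu} for compactness and Theorem~\ref{theorem:day} for the fixed point, with $\xspace$ and $\yspace$ identified by the Riesz representation.

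First I would dispatch weak compactness together with the norm bound. Unitarity gives $\|\phi x\|=\|x\|$ for every $\phi\in\group$, so the orbit $\group(x)$ sits inside the closed ball $B$ of radius $\|x\|$; this ball is convex and weakly closed, hence $\orb(x)\subset B$, which already yields $\|y\|\leq\|x\|$ for every $y\in\orb(x)$. Reflexivity of Hilbert space---or equivalently Lemma~\ref{lemma:alaoglu}(iii) applied with $\yspace\cong\xspace$---makes $B$ weakly compact, and $\orb(x)$, being weakly closed in $B$, is weakly compact as well.

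Next I would establish $\bar y=\bar x$ and the orthogonality. Fix any invariant $v\in\xspace_\group$. Unitarity and $\phi v=v$ give
\[
  \sp{\phi x,v}\;=\;\sp{\phi x,\phi v}\;=\;\sp{x,v}\qquad\text{for every }\phi\in\group,
\]
so the weakly continuous linear functional $\sp{\argdot,v}$ is constant on $\group(x)$ with value $\sp{x,v}$, and hence constant with the same value on the weak closure of its convex hull, namely $\orb(x)$. Therefore every $y\in\orb(x)$ satisfies $\sp{y-x,v}=0$ for all $v\in\xspace_\group$, i.e.\ $y-x\in\xspace_\group^\perp$. Combined with $x-\bar x\in\xspace_\group^\perp$, this forces $y-\bar x\in\xspace_\group^\perp$, equivalently $\bar y=\bar x$. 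In particular $\orb(x)\subset\bar x+\xspace_\group^\perp$, which is the sense in which $\orb(x)$ is ``orthogonal to $\xspace_\group$'' (the shifted orbitope lies in the orthocomplement).

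Finally, when $\group$ is amenable, I would invoke Theorem~\ref{theorem:day}(i): endowed with its weak topology, $\xspace$ is a locally convex Hausdorff space on which the action is still linear and continuous, and $\orb(x)$ is compact. The theorem then produces an invariant element $z\in\orb(x)\cap\xspace_\group$. Any such $z$ satisfies $z=\bar z$ by invariance and $\bar z=\bar x$ by the previous paragraph, whence $z=\bar x$; this gives both $\bar x\in\orb(x)$ and $\orb(x)\cap\xspace_\group=\{\bar x\}$. I do not anticipate any genuine obstacle; the only point to keep in mind is that amenability is used only in this final step, whereas weak compactness and the orthogonality to $\xspace_\group$ rest solely on unitarity and weak continuity of the inner product.
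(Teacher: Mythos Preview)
Your proof is correct and follows essentially the same route as the paper's: Lemma~\ref{lemma:alaoglu} for weak compactness, the observation that $\sp{\argdot,v}$ is constant on the orbit for invariant $v$ (the paper phrases this via the decomposition $\phi(x)=\bar{x}+\phi(x)^\perp$, but it is the same computation), and Theorem~\ref{theorem:day} for the fixed point in the amenable case. Your norm-bound argument---observing that the closed ball of radius $\|x\|$ is convex and weakly closed, hence contains $\orb(x)$---is if anything slightly slicker than the paper's, which verifies the bound on two-point convex combinations via the triangle inequality and then passes to the closure.
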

\begin{proof}
  See \cref{proofs:sec:hilbert:Lp}.
\end{proof}
\begin{fact}[Mean ergodic theorem {\citep[e.g.][]{Einsiedler:Ward:2011,Weiss:2003:1}}]
  \label{fact:mean:ergodic:theorem}
  Let a nice amenable $\group$ acts unitarily on a Hilbert space $\xspace$. Then
  \begin{equation*}
    \Big\|\,\mfrac{1}{|\A_n|}\mint_{\A_n^{-1}}\phi x\,|d\phi|\,-\,\bar{x}\,\Big\|\;\xrightarrow{n\rightarrow\infty}\;0\qquad\text{ for each }x\in\xspace\;,
  \end{equation*}
  where $\bar{x}$ is the projection of $x$ onto $\xspace_\group$.
\end{fact}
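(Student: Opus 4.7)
The plan is to derive the mean ergodic theorem as a direct consequence of the machinery already in place, namely \cref{result:orbitope:unitary:action} on orbitopes in Hilbert spaces and \cref{result:contraction} on contraction of orbitopes under \Folner averages. A unitary action satisfies \eqref{H:inner:product:invariant}, hence is isometric, and by \cref{result:orbitope:unitary:action} every orbitope $\orb(x)$ is weakly compact. Part (i) of \cref{result:contraction} therefore applies and yields
\[
  \|\empavg_n(\orb(x))\|\;\xrightarrow{n\to\infty}\;0.
\]

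Next I would argue that both $\empavg_n(x)$ and the projection $\bar{x}$ belong to the image set $\empavg_n(\orb(x))$. The first inclusion is immediate from $x\in\orb(x)$. For the second, \cref{result:orbitope:unitary:action} guarantees that $\bar{x}$ lies in $\orb(x)$ (as the unique invariant point of the orbitope, by the last assertion of that proposition), and invariance of $\bar{x}$ gives $\phi\bar{x}=\bar{x}$ for all $\phi\in\group$, so $\empavg_n(\bar{x})=\bar{x}$. Subtracting inside the image set then yields
\[
  \|\empavg_n(x)-\bar{x}\|\;\leq\;\|\empavg_n(\orb(x))\|\;\xrightarrow{n\to\infty}\;0,
\]
which is exactly the stated convergence, and pinpoints the limit as the orthogonal projection onto $\xspace_\group$.

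The only genuine subtlety is that the integral in the statement is taken over $\A_n^{-1}$ rather than $\A_n$. If $\group$ is unimodular—which covers the countable, compact, and abelian cases, and indeed all of the amenable groups one typically meets here—then $|\A_n^{-1}|=|\A_n|$ and $(\A_n^{-1})$ is again a \Folner sequence, so the argument goes through verbatim. In the general amenable case one may either pass to a symmetric \Folner sequence $\A_n'=\A_n\cup\A_n^{-1}$ (amenable groups always admit such sequences) or, equivalently, re-derive \cref{result:contraction} for the inverted sequence and absorb the modular function into the \Folner defect $|\A_n\vartriangle\phi\A_n|/|\A_n|$ appearing in \eqref{eq:contraction:2}. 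I expect this bookkeeping around inversion and unimodularity to be the only nontrivial step; once it is dispatched, the theorem is a one-line corollary of the contraction and Hilbert-space orthogonality results, with no further ergodic-theoretic input required.
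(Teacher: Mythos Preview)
Your proposal is correct and follows essentially the same route as the paper: weak compactness of $\orb(x)$ from \cref{result:orbitope:unitary:action}, then $\|\empavg_n(x)-\bar{x}\|=\|\empavg_n(x-\bar{x})\|\leq\|\empavg_n(\orb(x))\|\to 0$ from \cref{result:contraction}(i), using that $\bar{x}\in\orb(x)$ is the unique invariant element. The paper's proof is even terser and does not comment on the $\A_n$ versus $\A_n^{-1}$ discrepancy you flag; your remark that one can pass to the inverted (or a symmetric) \Folner sequence is the right way to make this honest, and is consistent with how the paper itself silently writes $\int_{\A_n^{-1}}$ inside the proof of \cref{result:contraction}.
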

By \cref{result:contraction}, we can interpret this result in terms of orbitopes contracting around $\xspace_\group$:
\begin{proof}
  The orbitope of $x$ is weakly compact and intersects $\xspace_\group$ in $\bar{x}$. \cref{result:contraction} shows
  that ${\|\empavg_n(x-\bar{x})\|\leq\|\empavg_n(\orb(x))\|\to 0}$.
\end{proof}

\subsection{Orbitopes in $\L_p$ spaces}
\label{sec:L1}

For a $\sigma$-finite measure $\mu$ on $\Omega$ and ${p\in[1,\infty]}$, we
denote by $\L_p(\mu)$ the space of (equivalence classes of) functions $f$ on $\Omega$ with
finite norm ${\|f\|_p:=(\int|f|^pd\mu)^{1/p}}$.
Recall that $\L_p$ is a Banach space with dual ${\L_p(\mu)'=\L_q(\mu)}$ for ${1\leq p\leq q\leq\infty}$ and ${1/p+1/q=1}$.
If $\mu$ is $\group$-invariant, each map ${\phi:\Omega\rightarrow\Omega}$ takes null sets to null sets, so we can define
\begin{equation}
  \label{Lp:action}
  (\mu\text{-equivalence class of }x)\circ\phi\;:=\;\mu\text{-equivalence class of }(x\circ\phi)
  \quad\text{ for }\phi\in\group\;.
\end{equation}
Since $\phi$-invariance of $\mu$ implies ${\mu(x\circ\phi)=\mu(x)}$ for every integrable function $x$, we have
\begin{equation*}
  \|x\circ\phi\|_p\;=\;\|x\|_p
  \quad\text{ and }\quad
  \sp{x\circ\phi,y\circ\phi}\;=\;\sp{x,y}\;.
\end{equation*}
In particular, the orbit of every ${x\in\L_p(\mu)}$ is again in $\L_p(\mu)$. 
It is straightforward to verify that \eqref{Lp:action} is a linear action on $\L_p(\mu)$, which is continuous since the norm is invariant.
Let ${\orb_p(x)}$ be the $\L_p$-norm orbitope of $x$. Since the $q$-norm dominates the $p$-norm for ${p\leq q}$, we have ${\group(x)\subset\orb_q(x)\subset\orb_p(x)}$,
and taking closures shows
\begin{equation*}
  \orb_q(x)\;=\;\text{norm-dense subset of }\orb_p(x)\qquad\text{ whenever }1\leq p\leq q\leq\infty\;.
\end{equation*}
Invariance of the norm also implies the orbits are bounded, so \cref{lemma:alaoglu} applies.
In summary, we have shown:
\begin{lemma}
  If $\group$ acts measurable on $\Omega$ and leaves $\mu$ invariant, \eqref{Lp:action} defines linear continuous actions
  on $\L_p(\mu)$ and $\L_q(\mu)$. If ${1/p+1/q=1}$, the actions are dual, and every weak* orbitope in $\L_q(\mu)$ is compact.
\end{lemma}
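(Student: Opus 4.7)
The plan is to unpack the three assertions, most of which follow directly from the calculations already presented in the paragraph immediately preceding the statement, together with \cref{lemma:alaoglu}.

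The first claim --- that \eqref{Lp:action} defines linear continuous actions on $\L_p(\mu)$ and $\L_q(\mu)$ --- is essentially a restatement of the preceding discussion, which already established well-definedness of $x \mapsto x \circ \phi^{-1}$ on equivalence classes, linearity of each map, and the isometry $\|\phi x\|_p = \|x\|_p$; the action axioms transfer from $\Omega$ via the identity $x \circ (\phi\psi)^{-1} = (x \circ \psi^{-1}) \circ \phi^{-1}$. What remains is joint continuity of $(\phi, x) \mapsto \phi x$, which for $p < \infty$ follows by approximating $x$ in norm by simple functions and invoking measurability of the action on $\Omega$; the isometry bound then promotes pointwise-in-$\phi$ continuity to joint continuity by a standard triangle-inequality argument. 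The identical reasoning applies verbatim to $\L_q(\mu)$.

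For the duality claim under $1/p + 1/q = 1$: the canonical dual pairing is $\sp{x, y} = \int xy \, d\mu$, and $\phi$-invariance of $\mu$ (via \eqref{eq:image:measure:int}, applied with $\phi$ replaced by $\phi^{-1}$) gives
\begin{equation*}
\sp{\phi x, \phi y} \;=\; \mint (x \circ \phi^{-1})(y \circ \phi^{-1}) \, d\mu \;=\; \mint (xy) \circ \phi^{-1} \, d\mu \;=\; \mint xy \, d\mu \;=\; \sp{x, y}\;,
\end{equation*}
which is precisely the defining property \eqref{dual:action} of dual actions.

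Finally, weak* compactness of orbitopes in $\L_q(\mu)$ is obtained by applying \cref{lemma:alaoglu}(iii) to the action on $\L_p(\mu)$ and its dual action on $\L_q(\mu)$: since the action on $\L_p$ is isometric it certainly has bounded orbits, so every consistent orbitope in the dual $\L_q(\mu)$ --- in particular, every weak* orbitope --- is weak* compact. The only mildly delicate point in the whole argument is the joint-continuity clause in the first step, but this is a classical functional-analytic fact that the preceding paragraph effectively sweeps into the phrase ``continuous since the norm is invariant''; everything else is a direct invocation of prior material.
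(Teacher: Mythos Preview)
Your proposal is correct and follows essentially the same route as the paper: the lemma is stated after the phrase ``In summary, we have shown,'' so its proof \emph{is} the preceding paragraph, which establishes well-definedness on equivalence classes, the isometry $\|x\circ\phi\|_p=\|x\|_p$, the duality identity $\sp{x\circ\phi,y\circ\phi}=\sp{x,y}$, and then invokes \cref{lemma:alaoglu} via bounded orbits---exactly the three pieces you identify. Your additional remarks on joint continuity go slightly beyond what the paper spells out (it simply asserts ``continuous since the norm is invariant''), but this is a clarification rather than a different approach.
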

Let ${\Sigma_\group}$ denote the $\sigma$-algebra
of $\group$-invariant Borel sets. A measurable function ${f:\Omega\to\mathbb{R}}$ is $\group$-invariant if and only if
it is $\Sigma_\group$-measurable.
The $\L_p$ of equivalence classes of $\Sigma_\group$-measurable functions is hence
\begin{equation*}
  \L_p(\Sigma_\group,\mu)\;=\;\braces{\text{equivalence class of }f\,|\,f\text{ $\group$-invariant and }\|f\|_p<\infty}\;,
\end{equation*}
which is the set of elements of $\L_p(\mu)$ invariant under the action \eqref{Lp:action}. In other words,
if we choose ${\xspace}$ as ${\L_p(\mu)}$, then $\L_p(\Sigma_\group,\mu)$ is the closed linear subspace $\xspace_\group$.

\subsection{Illustration: The Hunt-Stein theorem}
\label{sec:hunt:stein}

Consider a testing problem, in which a set $M$ of probability measures on $\Omega$ is partitioned into two sets
$H$ (the hypothesis) and $A$ (the alternative).
Recall that a set $M$ of probability measures is
\kword{dominated} if there is a $\sigma$-finite measure $\mu$ such that ${M\ll\mu}$.
If so, we denote the corresponding set of densities
\begin{equation*}
  dM/d\mu:=\braces{dP/d\mu\,|\,P\in M}\subset\L_1(\mu)\;.
\end{equation*}
Given an observation ${x\in\Omega}$, one must decide
whether $x$ was generated by an element of $H$ (``accept the
hypothesis'') or of $A$ (``reject''). The decision procedure
may be randomized, and is represented by a \kword{critical function}
$w$. The value $w(x)$ is read as the probability (under the randomization of the test)
that the test rejects when $x$ is observed.
A measurable function $w$ is a valid critical function
if it takes values in $[0,1]$ $P$-almost surely, for all ${P\in M}$.
If $M$ is dominated by a $\sigma$-finite measure $\mu$, 
the set of such functions is
\begin{equation*}
  W=\braces{w\in\L_\infty(\mu)\,|\,w\geq 0\;\mu\text{-a.e.\ and }\|w\|_{\infty}\leq 1}\;.
\end{equation*}
The objective is to find a critical function $\hat{w}$
that maximizes the probability of rejection on $A$, while
upper-bounding it on $H$ by some fixed ${\alpha\in[0,1]}$,
\begin{align}
  \label{maximin:test}
  \hat{w}\,\in\,\argmax_{w\in W}\,\inf\braces{P(w)\,|\,P\in A}
  \qquad
  \text{subject to }Q(w)\,\leq\,\alpha\quad\text{for all }Q\in H\;.
\end{align}
The Hunt-Stein theorem {\citep[e.g.][]{Bondar:Milnes:1981:1,LeCam:1986,Eaton:George:2021}} states the following:
\begin{fact}
  \label{fact:HS}
  Let $\group$ be amenable, and let both $H$ and $A$ be a $\group$-invariant and dominated by a
  $\group$-invariant, $\sigma$-finite measure $\mu$.
  If there is a critical function $\hat{w}$ that satisfies
  \eqref{maximin:test},
  there is an $\mu$-almost $\group$-invariant critical function $\overline{w}$
  that also satisfies \eqref{maximin:test}, and
  ${\inf_{\phi\in\group}\,\phi P(\hat{w})\;\leq\;P(\overline{w})\;\leq\;\sup_{\phi\in\group}\,\phi P(\hat{w})}$
  holds for all ${P\in M}$.
\end{fact}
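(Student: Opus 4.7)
The plan is to cast \eqref{maximin:test} as an instance of \cref{HS:general} on the dual pair $\xspace=\L_1(\mu)$, $\yspace=\L_\infty(\mu)$ with the action \eqref{Lp:action}. This action is dual and isometric, and every weak* orbitope in $\yspace$ is compact by the lemma preceding this section. Because $\mu$ is $\group$-invariant, a change of variables gives $d(\phi P)/d\mu=(dP/d\mu)\circ\phi^{-1}$, so the $\L_1$-action on densities matches the push-forward of measures, and both $dH/d\mu$ and $dA/d\mu$ are $\group$-invariant subsets of $\L_1(\mu)$.

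Set
\begin{equation*}
  E\;:=\;\bigl\{-dP/d\mu\,\bigm|\,P\in A\bigr\}\subset\L_1(\mu)
  \quad\text{and}\quad
  F\;:=\;\bigl\{w\in W\,\bigm|\,Q(w)\leq\alpha\text{ for all }Q\in H\bigr\}\subset\L_\infty(\mu).
\end{equation*}
Then $E$ is $\group$-invariant. The set $W$ is weak* closed, convex, and $\group$-invariant (by isometry of the action), and each functional $w\mapsto Q(w)$ is weak* continuous, so $F$ is weak* closed, convex, and $\group$-invariant. For any $w\in W$,
\begin{equation*}
  H(w\,|\,E)\;=\;\sup_{P\in A}\sp{-dP/d\mu,w}\;=\;-\inf_{P\in A}P(w),
\end{equation*}
so minimizing $H(\argdot\,|\,E)$ over $F$ is precisely the maximin problem \eqref{maximin:test}. \cref{HS:general} now produces a $\group$-invariant $\bar{w}\in\orb(\hat{w})$ solving the same problem. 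Since orbitopes are the smallest closed convex invariant sets (\cref{lemma:orbitope:invariant}), $\orb(\hat{w})\subset F$, so $\bar{w}$ is automatically a valid critical function respecting the level constraint; $\group$-invariance in $\L_\infty(\mu)$ is by definition $\mu$-almost $\group$-invariance.

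The sandwich inequality then follows without further appeal to the proposition: $\bar{w}$ lies in the weak*-closed convex hull of $\group(\hat{w})$, and $w\mapsto P(w)$ is a weak*-continuous linear functional on $\L_\infty(\mu)$ for every $P\in M$ (not only $P\in A$). Hence $P(\bar{w})$ lies in the closed convex hull in $\mathbb{R}$ of $\{P(\phi\hat{w}):\phi\in\group\}$, and thus in the interval $[\inf_\phi P(\phi\hat{w}),\sup_\phi P(\phi\hat{w})]$. By \eqref{eq:image:measure:int}, $P(\phi\hat{w})=P(\hat{w}\circ\phi^{-1})=(\phi^{-1}P)(\hat{w})$; reindexing $\phi\leftrightarrow\phi^{-1}$ converts these bounds into the claimed two-sided bound for every $P\in M$.

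The main challenge is pure bookkeeping: keeping the duality sign convention straight so that the minus sign in $E$ correctly turns the maximin \eqref{maximin:test} into a support-function minimization, and tracking the push-forward identity $P(\phi\hat{w})=(\phi^{-1}P)(\hat{w})$ when translating between the action on critical functions and the action on measures. The topological concern — weak* compactness of orbitopes, needed to invoke \cref{HS:general} — is handled directly by the preceding lemma together with \cref{lemma:alaoglu}, and so is not itself an obstacle.
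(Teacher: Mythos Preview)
Your proof is correct and follows essentially the same route as the paper: cast \eqref{maximin:test} as minimizing the support function $H(\argdot\,|\,-dA/d\mu)$ over the closed convex invariant set $F$ in the dual pair $\sp{\L_1(\mu),\L_\infty(\mu)}$, then invoke \cref{HS:general}. The only cosmetic difference is in the sandwich bound: the paper reads it off from the support-function inequality in \cref{HS:general} combined with \eqref{eq:H:equals:sup}, whereas you argue directly that $P(\bar w)$ lies in the closed convex hull of $\{P(\phi\hat w):\phi\in\group\}$ by weak*-continuity of $w\mapsto P(w)$---the same fact in slightly different clothing.
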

In terms of our results, this is the case because the hypothesis implies that \eqref{maximin:test}
minimizes a $\group$-invariant convex lsc function $f$ (or rather, maximizes $-f$), and the orbitope
of $\hat{w}$ is compact since it is a weak* orbitope in $\L_\infty$.
In more detail, we can obtain the result from \cref{HS:general}:
Since $dM/d\mu$ is a subset of $\L_1(\mu)$, and $W$ is in the dual ${\L_1(\mu)'=\L_\infty(\mu)}$, we have
${P(w)=\sp{dP/d\mu,w}}$. Maximizing \eqref{maximin:test}
is then equivalent to minimizing a support function, namely
\begin{equation*}
  \sup_{P\in A}{\sp{-dP/d\mu,w}}\;=\;H(w|-dA/d\mu)\;,
  \quad\text{ over }\quad F:=\medcap\nolimits_{Q\in H}\braces{w\in W\,|\,Q(w)\leq\alpha}\;.
\end{equation*}
It is easy to check the assumptions of \cref{fact:HS} make both $f$ and $F$ convex and $\group$-invariant, and that
$F$ is closed. The existence of $\bar{w}$ hence follows from \cref{HS:general}, and applying \eqref{eq:H:equals:sup} to the bound in \cref{HS:general}
shows
\begin{align*}
  P(\bar{w})
  \;\leq\;
  \sp{dP/d\mu,\bar{w}}
  \;\leq\;
  H(dP/d\mu|\orb(\hat{w}))
  \;=\;
  \sup\nolimits_{\phi}\phi_*P(\hat{w})\;,
\end{align*}
which is the upper bound in \cref{fact:HS}. The lower bound follows analogously.

\begin{remark*}
  Observe that, by \cref{theorem:day}, the invariant solution $\overline{w}$ is the limit of a subsequence of \Folner averages
  ${\empavg_n(\hat{w})}$. In a limiting sense, the Hunt-Stein theorem is therefore an instance of the summation trick \eqref{summation:trick},
  although the relevant subsequence is in general not determined. Under suitable additional conditions---if \cref{result:contraction} applies---the
  solution becomes an actual limit.
\end{remark*}

\section{Orbitopes II: Probability measures}
\label{sec:probatopes}

The next class of orbitopes we consider are those of probability measures. These are closed convex subsets of the
set ${\P=\P(\Omega)}$ of probability measures on a Polish space $\Omega$.
To make sense of linearity, we consider the smallest vector space containing $\P$, which is the space
${\ca:=\text{span}\,\P}$ of finite signed measures on $\Omega$.
We let $\group$ act measurably on $\Omega$, and consider the induced action \eqref{transforming:functions} on
(signed) measures, i.e.~the linear action ${\mu\mapsto\phi_*\mu}$ for ${\phi\in\group}$.
Since $\phi_*\P\subset\P$, its restriction to $\P$ is again an action.
There are two natural topologies on $\P$,
one defined by the total variation norm ${\|\mu\|_{\TV}:=|\mu(\Omega)|}$ on $\ca$, and one by convergence in distribution on $\P$.
We denote the corresponding orbitopes 
\begin{align*}
  \orb_{\TV}(P)\;&:=\;\text{closure of $\ch\group(P)$ in total variation}
  \\
  \text{ and }
  \qquad
  \orb_{\cid}(P)\;\;&:=\;\text{closure of $\ch\group(P)$ in convergence in distribution}\;.
\end{align*}
We always have ${\orb_{\TV}(P)\subset\orb_{\cid}(P)}$, but as we show below, these orbitopes can differ significantly.

\subsection{Compactness}

If $\Omega$ is compact, then $\P$ is compact in convergence in distribution, so $\orb_{\cid}(P)$ is automatically
compact. If $\Omega$ is not compact, there is still a sharp criterion for compactness of orbitopes:
Say that $P$ is \kword{$\group$-tight} if
\begin{equation*}
  P(\phi^{-1}K_\varepsilon)\;>\;1-\varepsilon
  \qquad\text{ for each }\varepsilon>0\,,\text{ some compact }K_\varepsilon\subset\xspace\,,\text{ and
    all }\phi\in\group\;.
\end{equation*}
For illustration, let $P_z$ be a Gaussian with mean $z$ on $\mathbb{R}^2$.
Suppose $\group$ consists of rotations around the origin.
If $z$ is the origin, $P_z$ is $\group$-invariant. Otherwise,
$P_z$ is not $\group$-invariant, but it is $\group$-tight (choose
$K_\varepsilon$ as a sufficiently large closed ball around the origin).
If $\group$ is instead the group of shifts of $\mathbb{R}$, 
no Gaussian (and in fact no probability measure on $\mathbb{R}^2$) is
$\group$-tight.
\begin{proposition}
  \label{lemma:G:tight}
  If $\group$ acts continuously on $\Omega$, the orbitope $\Pi_{\cid}(P)$ is compact if and only if $P$ is $\group$-tight.
\end{proposition}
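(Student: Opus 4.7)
The plan is to reduce the statement to Prokhorov's theorem, which on a Polish space $\Omega$ says that a subset of $\P$ is relatively compact in convergence in distribution if and only if it is uniformly tight. Since $\orb_{\cid}(P)$ is by construction closed in $\P$, compactness is equivalent to relative compactness, hence (by Prokhorov) to uniform tightness of $\orb_{\cid}(P)$ itself.

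Next I would show that uniform tightness of $\orb_{\cid}(P)$ is equivalent to uniform tightness of the orbit $\group(P)$. One direction is trivial since $\group(P)\subset\orb_{\cid}(P)$. For the other, I would first check that tightness is preserved under convex combinations: if each $\phi_*P$ places mass $>1-\varepsilon$ on a common compact set $K_\varepsilon$, then by linearity any $\msum c_i\phi_{i,*}P$ does too. This gives tightness of $\ch\group(P)$ with the same $K_\varepsilon$. Then I would pass to the closure: by the portmanteau theorem, for any $Q$ in the convergence-in-distribution closure of a uniformly tight family and any closed set $F$, $Q(F)\geq\limsup Q_n(F)$. Applying this with $F=K_\varepsilon$ shows that $\orb_{\cid}(P)$ inherits the same tightness constant.

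Finally, the definition of image measure gives $\phi_*P(K_\varepsilon)=P(\phi^{-1}K_\varepsilon)$, so uniform tightness of $\group(P)$ is literally the definition of $\group$-tightness of $P$. Chaining the equivalences yields the proposition.

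The only point requiring a little care is the passage from $\ch\group(P)$ to its closure, since one must be sure that the single compact set $K_\varepsilon$ chosen for the convex hull still works for the closure; this is exactly what the upper-semicontinuity on closed sets in the portmanteau theorem provides, and it is the reason the proof is honest rather than a tautology. The continuity hypothesis on the action enters only to guarantee $\phi_*P\in\P$ is well defined (so that $\group(P)\subset\P$ makes sense) and that $\phi^{-1}K_\varepsilon$ is Borel; no deeper use is made of it here.
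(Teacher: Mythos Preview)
Your proposal is correct and follows essentially the same route as the paper: both reduce the question to Prokhorov's theorem via the observation that $\group$-tightness of $P$ is precisely uniform tightness of $\group(P)$, which is preserved under convex combinations. The only difference is that you add a portmanteau step to pass tightness from $\ch\group(P)$ to its closure, whereas the paper applies Prokhorov directly to $\ch\group(P)$ (tightness of a family is equivalent to compactness of its closure), making your extra step correct but unnecessary.
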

\begin{proof}
  See \cref{proofs:sec:probatopes}.
\end{proof}
Now consider the total variation topology. If we can find
a $\group$-invariant measure $\mu$ that dominates $P$, we can reduce the problem
of compactness of $\orb_{\TV}(P)$ to compactness in $\L_1(\mu)$.
\begin{proposition}
  \label{result:TV:orbitopes}
  Let $\group$ act measurably on $\Omega$, and fix any ${P\in\P}$.
  \\[.5em]
  (i) If ${P\ll\mu}$ holds for a $\group$-invariant $\sigma$-finite measure $\mu$, then
  \begin{equation*}
    \orb_{\TV}(P)\ll\mu
    \qquad\text{ and }\qquad
    d\Pi_{\TV}(P)/d\mu\;=\;\orb_1(dP/d\mu)\;,
  \end{equation*}
  where $\Pi_1$ denotes the norm orbitope in $\L_1(\mu)$.
  The orbitope ${\orb_{\TV}(P)}$ is compact if and only if $\orb_1(dP/d\mu)$ is norm compact in $\L_1(\mu)$.
  \\[.5em]
  (ii)
  If $\group$ is amenable and $\orb_{\TV}(P)$ is compact, every random element $\xi$ with law $P$
  can be coupled to a random element $\Psi$ of $\group$ such that $\,\smash{\phi(\Psi\xi)\equdist\Psi\xi}\;$ for all ${\phi\in\group}$.
\end{proposition}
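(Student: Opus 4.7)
I would treat the two parts separately: (i) reduces to an isometric identification with $\L_1(\mu)$, and (ii) combines Day's theorem with a coupling construction.

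\emph{Part (i).} Define $\Phi\colon\braces{Q\in\ca\colon Q\ll\mu}\to \L_1(\mu)$ by $\Phi(Q):=dQ/d\mu$. For signed measures absolutely continuous with respect to $\mu$, $\|Q\|_{\TV}=\int|dQ/d\mu|\,d\mu$, so $\Phi$ is a linear isometry onto $\L_1(\mu)$. Invariance of $\mu$ gives, by change of variables, the Radon-Nikodym identity $d(\phi_*Q)/d\mu=(dQ/d\mu)\circ\phi^{-1}$ for every $\phi\in\group$, which is precisely the action \eqref{Lp:action} on $\L_1(\mu)$. Being a linear equivariant isometry, $\Phi$ sends the orbit of $P$ to the orbit of $dP/d\mu$, preserves convex hulls, and carries TV-closure to $\L_1$-norm closure; hence $\Phi(\orb_{\TV}(P))=\orb_1(dP/d\mu)$ and compactness transfers in either direction.

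\emph{Part (ii).} I would first apply \cref{fact:day} to the TV-compact, convex, $\group$-invariant set $\orb_{\TV}(P)$, extracting a $\group$-invariant $\bar P\in\orb_{\TV}(P)$; it remains to realize $\bar P$ as the law of $\Psi\xi$ for a coupling with $\xi\sim P$. The crucial observation is that $P$ and $\bar P$ coincide on the $\sigma$-algebra $\mathcal{I}$ of $\group$-invariant Borel sets: for $A\in\mathcal{I}$, every convex combination $\sum c_i\phi_{i*}P$ equals $P(A)$ since $\phi_i^{-1}A=A$, and this identity survives the TV-limit defining $\bar P$. Rokhlin-disintegrate both measures over $\mathcal{I}$ (which is possible since $\Omega$ is Polish, after completing $\mathcal{I}$ with respect to $P$ if needed); they share the $\mathcal{I}$-marginal, and on each atom the $\bar P$-conditional is $\group$-invariant whereas the $P$-conditional is arbitrary. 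A measurable selection then produces a stochastic kernel $\omega\mapsto\kappa_\omega\in\P(\group)$ satisfying $\iint f(\phi\omega)\,d\kappa_\omega(\phi)\,dP(\omega)=\int f\,d\bar P$ for all bounded measurable $f$; declaring $\Psi\mid\xi\sim\kappa_\xi$ completes the coupling, and invariance of $\bar P$ then yields $\phi(\Psi\xi)\equdist\Psi\xi$ for every $\phi\in\group$.

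\emph{Main obstacle.} The delicate step is the measurable construction of $\kappa$ in Part (ii). The natural approximating couplings $\nu_n\otimes P$, built from finitely supported $\nu_n\in\P(\group)$ with $\nu_n*P\to\bar P$ in TV, need not be tight on $\group\times\Omega$ when $\group$ is non-compact, since the $\group$-marginals $\nu_n$ can escape to infinity; one therefore cannot simply pass to a weak limit on the product space. Working orbit-wise via the $\mathcal{I}$-disintegration bypasses this by localizing to invariant ergodic components, reducing the problem on each component to transporting a conditional of $P$ to the unique invariant conditional of $\bar P$ through the group action, after which these kernels are pasted into a global measurable $\kappa$ by a standard Polish selection argument.
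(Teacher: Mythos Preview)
Your Part (i) is correct and essentially identical to the paper's argument: both use that the Radon--Nikodym map $Q\mapsto dQ/d\mu$ is a linear, equivariant isometry from $(\P_\mu,\|\cdot\|_{\TV})$ into $(\L_1(\mu),\|\cdot\|_1)$, which immediately transports the orbitope and its compactness.

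For Part (ii) you and the paper agree up to the key observation that $P$ and the invariant $\bar P\in\orb_{\TV}(P)$ coincide on the invariant $\sigma$-algebra $\Sigma_\group$. At that point, however, the paper does not build the coupling by hand: it simply invokes Thorisson's orbit coupling theorem (\cref{fact:thorisson}), which says precisely that two probability measures on a Polish space agreeing on $\Sigma_\group$ can be coupled by a random group element. What you sketch---disintegrating over $\Sigma_\group$, matching conditionals orbit-by-orbit, and assembling a measurable kernel $\omega\mapsto\kappa_\omega\in\P(\group)$ via selection---is essentially an attempt to reprove Thorisson's theorem, and your ``main obstacle'' paragraph correctly identifies that this is where the real work lies. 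The claims that ``on each atom the $\bar P$-conditional is $\group$-invariant'' and that ``a standard Polish selection argument'' produces $\kappa$ are not routine: the first is close to the ergodic decomposition, and the second needs a genuine argument about how to transport one measure to another through the group action measurably in the base point. None of this is wrong in spirit, but as written it is a sketch of a non-trivial theorem rather than a proof; the paper sidesteps all of it by citing the result.
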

\begin{proof}
  See \cref{proofs:sec:probatopes}.
\end{proof}

\subsection{Point masses and empirical measures}

To understand the geometry of orbitopes in $\P$ we consider the simple case of points masses.
\begin{proposition}[Orbitopes of point masses]
  \label{lemma:cid:orbitope:pointmass}
  Let ${\omega}$ be an element of ${\Omega}$.
  The total variation orbitope of $\delta_{\omega}$ is the set 
  \begin{equation*}
    \Pi_{\TV}(\delta_{\omega})
    \;=\;
    \braces{P=\tsum_{i\in\mathbb{N}}p_i\delta_{\phi_i {\omega}}\,|\,p_i\geq 0,\tsum p_i=1, \phi_i\in\group({\omega})}\,
  \end{equation*}
  of all purely atomic probability measures on the orbit $\group({\omega})$.
  The weak* orbitope and its extreme points are
  \begin{equation*}
    \Pi_{\cid}(\delta_{\omega})\;=\;\P(\overline{\group({\omega})})
    \quad\text{ and }\quad
    \ex\Pi_{\cid}(\delta_{\omega})\;=\;\braces{\delta_z\,|\,z\in\overline{\group({\omega})}}\;,
  \end{equation*}
  and ${\Pi_{\cid}(\delta_{\omega})}$ is a closed face of the convex set ${\P(\Omega)}$.
\end{proposition}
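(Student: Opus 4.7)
I would prove the four assertions in the order (2), (1), (3), (4), since the face and extreme-point claims follow most cleanly once the orbitopes themselves have been identified.

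For the weak* orbitope in (2), the inclusion $\Pi_\cid(\delta_\omega) \subseteq \P(\overline{\group(\omega)})$ follows from the portmanteau theorem: $\P(\overline{\group(\omega)})$ is convex and weak* closed because $\overline{\group(\omega)}$ is closed, and it evidently contains every $\delta_{\phi\omega}$. For the reverse inclusion I would invoke the standard approximation on a Polish space: every probability measure is a weak* limit of finitely supported probability measures whose atoms lie in any prescribed dense subset of its support. Applied to $P \in \P(\overline{\group(\omega)})$ with the dense subset $\group(\omega) \subseteq \overline{\group(\omega)}$, this produces convex combinations of Dirac masses at orbit points converging weak* to $P$.

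For the total variation orbitope in (1), the starting point is that each finite convex combination $P_n = \sum_i c_i^n \delta_{\phi_i^n \omega}$ is concentrated on a finite set $S_n \subseteq \group(\omega)$. The countable union $S := \bigcup_n S_n$ is still contained in $\group(\omega)$ and is Borel (countable subsets of a Polish space are Borel). If $P_n \to P$ in total variation, then $|P_n(S) - P(S)| \leq \|P_n - P\|_\TV$, so $P(S) = 1$ and $P$ is purely atomic with atoms in $\group(\omega)$. Conversely, for $P = \sum_{i \in \mathbb{N}} p_i \delta_{\phi_i \omega}$ I would approximate by the truncations $P_N := \sum_{i \leq N} p_i \delta_{\phi_i \omega} + \bigl(1 - \sum_{i \leq N} p_i\bigr) \delta_{\phi_1 \omega}$, which are finite convex combinations of orbit point masses and satisfy $\|P - P_N\|_\TV \leq 2 \sum_{i > N} p_i \to 0$.

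For (3), each $\delta_z$ is extreme in $\P(\Omega)$ — any decomposition $\delta_z = t Q_1 + (1-t) Q_2$ with $0 < t < 1$ forces $Q_1 = Q_2 = \delta_z$ — so these Dirac masses remain extreme in the smaller set $\Pi_\cid(\delta_\omega)$. The converse, that an extreme element must be a Dirac, follows by splitting any non-Dirac $P$ using its restrictions to a nontrivial measurable partition of its support. For the face property in (4), if $P = tQ_1 + (1-t) Q_2$ with $P \in \P(\overline{\group(\omega)})$ and $0 < t < 1$, then $t Q_1(\overline{\group(\omega)}) + (1-t) Q_2(\overline{\group(\omega)}) = 1$ combined with $Q_i(\overline{\group(\omega)}) \leq 1$ forces both $Q_i \in \P(\overline{\group(\omega)})$; closedness was already observed in step (2).

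The main obstacle I anticipate is the approximation step in (2): one needs atoms in the orbit itself, not merely in its closure. I would handle this in two moves — first approximate $P$ weak* by an empirical measure $\hat{P}_N = N^{-1} \sum_{i \leq N} \delta_{y_i}$ with $y_i \in \overline{\group(\omega)}$, then replace each $y_i$ by a nearby $\phi_i\omega \in \group(\omega)$, using a metric for the Polish topology together with uniform continuity on the finite support $\{y_i\}$ to control the weak* distortion on a countable determining family of bounded continuous test functions.
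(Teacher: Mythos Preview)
Your proposal is correct, and for (1), (3), (4) it essentially matches the paper --- the paper simply cites Aliprantis--Border for the extreme-point and face facts, while you give direct arguments, and its treatment of (1) is a terse one-liner (``TV convergence implies setwise convergence, so $\P(\group(\omega))$ is TV-closed''), where your explicit countable-support argument is actually more careful.

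The genuine difference is in (2). You approximate $P\in\P(\overline{\group(\omega)})$ by an empirical measure with atoms in $\overline{\group(\omega)}$ and then perturb each atom into the orbit, which is where you locate your ``main obstacle''. The paper sidesteps this obstacle entirely by reversing the order of closure and convex hull: it first observes that $\delta_{x_n}\xrightarrow{\text{d}}\delta_x$ iff $x_n\to x$ in $\Omega$, so the weak* closure of the orbit $\group(\delta_\omega)$ is precisely $\{\delta_z:z\in\overline{\group(\omega)}\}$. Since $\overline{\group(\omega)}$ is a closed subset of a Polish space and hence itself Polish (in particular metrizable), the finitely supported probabilities on it are weak*-dense in $\P(\overline{\group(\omega)})$, and one concludes $\Pi_\cid(\delta_\omega)=\cch\{\delta_z:z\in\overline{\group(\omega)}\}=\P(\overline{\group(\omega)})$ directly. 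This buys a cleaner argument with no perturbation step; your route is more hands-on but has the virtue of staying with atoms in the orbit throughout.
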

\begin{proof}
  See \cref{proofs:sec:probatopes}.
\end{proof}
\cref{lemma:orbitope:invariant}(iv) extends the result to probability measures with finite support, that is, to mixtures of point masses. For example:
\begin{corollary}
  Let ${\hat{P}_n:=n^{-1}\sum_{i\leq n}\delta_{\omega_i}}$ be the empirical measure of points ${\omega_1,\ldots,\omega_n}$
  in $\Omega$.
  Then ${\orb_{\cid}(\hat{P}_n)=\braces{n^{-1}\sum_{i\leq n}Q_i\,|\,Q_i\in\P(\overline{\group(\omega_i)})}}$.
\end{corollary}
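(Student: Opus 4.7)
The plan is to combine two results already established in the excerpt: the decomposition of orbitopes of convex combinations from \cref{lemma:orbitope:invariant}(iv), and the explicit description of the weak* orbitope of a point mass from \cref{lemma:cid:orbitope:pointmass}.

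First I would write the empirical measure as the convex combination
\[
\hat{P}_n\;=\;\tsum_{i\leq n}\mfrac{1}{n}\delta_{\omega_i}\;,
\]
with coefficients $c_i=1/n$ and atoms $x_i=\delta_{\omega_i}$ viewed as elements of the locally convex space $\ca$ equipped with the topology of convergence in distribution. Since this topology makes $\group$ act linearly and continuously on $\ca$ (as needed for \cref{lemma:orbitope:invariant}), item (iv) of that lemma applies and yields
\[
\orb_{\cid}(\hat{P}_n)\;=\;\Bigbraces{\tsum_{i\leq n}\mfrac{1}{n}Q_i\,\Big|\,Q_i\in\orb_{\cid}(\delta_{\omega_i})}\;.
\]

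Second, I would substitute the explicit form of the orbitope of a point mass from \cref{lemma:cid:orbitope:pointmass}, namely $\orb_{\cid}(\delta_{\omega_i})=\P(\overline{\group(\omega_i)})$, into the previous display. This immediately yields the claimed identity.

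There is no real obstacle here, as the corollary is a direct substitution. The only small point worth noting is that \cref{lemma:orbitope:invariant}(iv) is stated for arbitrary vectors in $\xspace$, and it should be applied with $\xspace=\ca$ under the topology of convergence in distribution; the resulting convex combinations $n^{-1}\sum Q_i$ of probability measures are again probability measures, so the right-hand side lies in $\P(\Omega)$ as desired.
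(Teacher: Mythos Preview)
Your proposal is correct and matches the paper's own argument exactly: the paper presents this corollary as an immediate consequence of \cref{lemma:orbitope:invariant}(iv) applied to the convex combination $\hat{P}_n=n^{-1}\sum_{i\leq n}\delta_{\omega_i}$, followed by substituting $\orb_{\cid}(\delta_{\omega_i})=\P(\overline{\group(\omega_i)})$ from \cref{lemma:cid:orbitope:pointmass}. No separate proof is given beyond the sentence preceding the corollary, which is precisely the two-step argument you wrote out.
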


  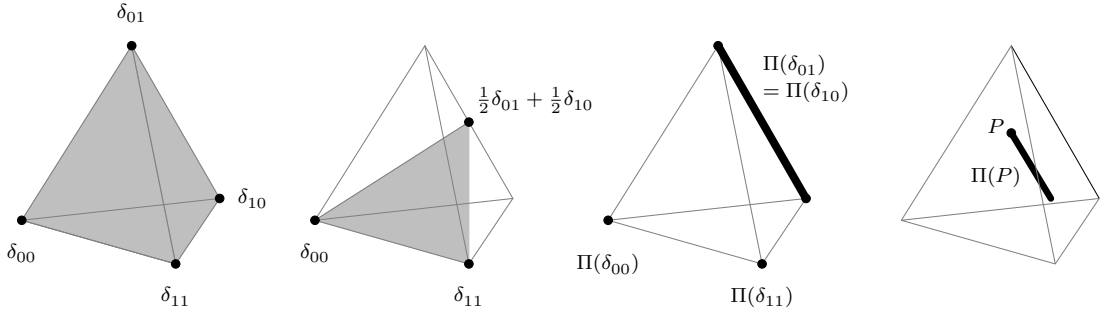
\begin{figure}
    \begin{center}
      \resizebox{\textwidth}{!}{
  \begin{tikzpicture}
    \begin{scope}[scale=3]
    \node (a) at (.5,.8) {};
    \node (b) at (0,0) {};
    \node (c) at (.9,0.1) {};
    \node (d) at (.7,-.2) {};
    \draw[gray] (b.center)--(c.center)--(d.center)--(b.center);
    \draw[gray,fill,opacity=.5] (b.center)--(a.center)--(d.center)--cycle;
    \draw[gray,fill,opacity=.5] (c.center)--(a.center)--(d.center)--cycle;
    \draw[gray] (b.center)--(a.center)--(d.center)--cycle;
    \draw[gray] (a.center)--(c.center);
    \node[circle,fill,draw,scale=.3] at (a) {};
    \node[circle,fill,draw,scale=.3] at (b) {};
    \node[circle,fill,draw,scale=.3] at (c) {};
    \node[circle,fill,draw,scale=.3] at (d) {};
    \node at ($(a)+(0,.15)$) {\scriptsize $\delta_{01}$};
    \node at ($(b)+(0,-.15)$) {\scriptsize $\delta_{00}$};
    \node at ($(c)+(.15,0)$) {\scriptsize $\delta_{10}$};
    \node at ($(d)+(0,-.15)$) {\scriptsize $\delta_{11}$};
    \end{scope}
    \begin{scope}[xshift=4cm,scale=3]
    \node (a) at (.5,.8) {};
    \node (b) at (0,0) {};
    \node (c) at (.9,0.1) {};
    \node (d) at (.7,-.2) {};
    \draw[gray] (b.center)--(c.center)--(d.center)--(b.center);
    \draw[gray] (b.center)--(a.center)--(d.center);
    \draw[gray] (a.center)--(c.center) node[pos=.5] (e) {};
    \draw[gray] (b.center)--(e.center);
    \draw[fill,gray,opacity=.5] (b.center)--(e.center)--(d.center)--(b.center);
    \node[circle,fill,draw,scale=.3] at (b) {};
    \node[circle,fill,draw,scale=.3] at (d) {};
    \node[circle,fill,draw,scale=.3] at (e) {};
    \node at ($(b)+(0,-.15)$) {\scriptsize $\delta_{00}$};
    \node at ($(d)+(0,-.15)$) {\scriptsize $\delta_{11}$};
    \node at ($(e)+(.3,0.1)$) {\scriptsize $\tfrac{1}{2}\delta_{01}+\tfrac{1}{2}\delta_{10}$};
    \end{scope}
    \begin{scope}[xshift=8cm,scale=3]
    \node (a) at (.5,.8) {};
    \node (b) at (0,0) {};
    \node (c) at (.9,0.1) {};
    \node (d) at (.7,-.2) {};
    \node (f) at ($(a)+(0,-.4)$) {};
    \draw (a.center)--(c.center) node[pos=.5] (e) {};
    \draw[gray] (b.center)--(c.center)--(d.center)--(b.center);
    \draw[gray] (b.center)--(a.center)--(d.center);
    \draw[line width=1.1mm, black] (a.center)--(c.center);

    \node[circle,fill,draw,scale=.3] at (a) {};
    \node[circle,fill,draw,scale=.3] at (b) {};
    \node[circle,fill,draw,scale=.3] at (c) {};
    \node[circle,fill,draw,scale=.3] at (d) {};
    \node at ($(b)+(0,-.2)$) {\scriptsize $\Pi(\delta_{00})$};
    \node at ($(d)+(0,-.15)$) {\scriptsize $\Pi(\delta_{11})$};
    \node at ($(e)+(.2,0.2)$) {\scriptsize
      \begin{tabular}{l}
        $\Pi(\delta_{01})$
        \\[.1em]
        $=\Pi(\delta_{10})$
    \end{tabular}};
    \end{scope}
    \begin{scope}[xshift=12cm,scale=3]
    \node (a) at (.5,.8) {};
    \node (b) at (0,0) {};
    \node (c) at (.9,0.1) {};
    \node (d) at (.7,-.2) {};
    \node (f) at ($(a)+(0,-.4)$) {};
    \draw (a.center)--(c.center) node[pos=.5] (e) {};
    \draw[line width=.9mm, black,line cap=round] (f.center)--($(c)+(-.22,-.0)$);
    \draw[gray] (b.center)--(c.center)--(d.center)--(b.center);
    \draw[gray] (b.center)--(a.center)--(d.center);
    \node[circle,fill,draw,scale=.3] at (f) {};
    \node at ($(f)+(-.07,.03)$) {\scriptsize $P$};
    \node at ($(f)+(-.07,-.2)$) {\scriptsize $\Pi(P)$};
    \end{scope}
  \end{tikzpicture}
  }
    \end{center}
  \caption{Orbitopes of coin flip pairs.
    \emph{Left}: The set $\P$ of distributions on $\braces{0,1}^2$ is the convex hull of its four point masses,
    and can be identified with a subset of ${\smash{\mathbb{R}^3}}$.
    \emph{Middle left}: The set $\P_\group$ of permutation-invariant distributions is a convex subset of $\P$.
    The orbits of $\group$ in $\smash{\braces{0,1}^2}$ are the sets $\braces{00}$, $\braces{11}$, and $\braces{01,10}$,
    and the extreme points of $\P_\group$ are the uniform distributions on these orbits.
    \emph{Middle right}: Orbitopes of point masses are faces of $\P$ (cf.~\cref{lemma:cid:orbitope:pointmass}), here
    two singletons and an edge.
    \emph{Middle right}: The orbitope of a measure $P$ in the interior.
  }
  \label{fig:bernoulli:orbitope}
  \end{figure}

  \begin{example}
    \label{example:point:masses}
    To illustrate how norm and weak* orbitopes may differ,
    consider the product space ${\Omega=\braces{0,1}^\mathbb{N}}$ of infinite binary sequences.
    We let the group $\mathbb{S}$ of finitely supported permutations of $\mathbb{N}$ act on $\Omega$ by permuting sequence indices.
    This is the setting of de Finetti's theorem: A random element of $\Omega$ whose law is $\mathbb{S}$-invariant
    is an exchangeable sequence. Denote by ${\Omega(m,n)\subset\Omega}$ the set of sequences containing $m$ zeros $n$ and ones.
    Thus, each sequence is either in ${\Omega(\infty,\infty)}$, or in ${\Omega(n,\infty)}$ or ${\Omega(\infty,n)}$ for some finite $n$.
    \par\vspace{.5em}
    {\hangindent=.75em\hangafter=0
    \noindent{\bf Lemma}.
    The point mass at a sequence $\omega$ has TV orbitope ${\Pi_{\TV}(\delta_{\omega})=\P(\mathbb{S}({\omega}))}$,
    whereas ${\Pi_{\cid}(\delta_{\omega})}$ is ${\P(\mathbb{S}({\omega}))}$ if ${\omega\not\in\Omega(\infty,\infty)}$, and
    ${\P(\Omega(\infty,\infty))}$ for ${\omega\in\Omega(\infty,\infty)}$.
    \par
    \hangindent=0em}
      \vspace{.5em}
      \noindent (See \cref{proofs:sec:probatopes} for the proof.)
      Since $\mathbb{S}$ is countable, each orbit $\mathbb{S}(\omega)$ is countable,
      but $\Omega(\infty,\infty)$
    is not. Thus, weak* orbitopes for sequences in $\Omega(\infty,\infty)$ are much larger than the
    total variation orbitopes of the same sequences, and also than the weak* orbitopes of sequences
    with finitely many 0s or 1s.
    \end{example}
    \begin{example}
 The set of probability measures on the smaller set ${\Omega=\braces{0,1}^2}$ is illustrated in 
\cref{fig:bernoulli:orbitope}. That all orbitopes are orthogonal to the subset $\P_\group$ of invariant measures
is due to the fact that ${\Omega}$ is  finite and we have identified $\P(\Omega)$ with a subset of the Hilbert space $\smash{\mathbb{R}^3}$.
There is no similar notion of orthogonality if $\Omega$ is infinite.
Orbitopes of point masses are closed faces of $\P$, as explained in \cref{lemma:cid:orbitope:pointmass}.   
  \end{example}

\subsection{Duality}

We briefly discuss a somewhat technical matter, namely
how orbitopes in $\P$ relate to our previous results on duality.
The answer is straightforward if the Polish space $\Omega$ is also compact---in this case,
$\ca$ is the dual space of the space ${(\C_b,\|\argdot\|_{\sup})}$ of bounded continuous functions
on $\Omega$. For a general Polish space, the answer is somewhat more involved. 
The dual of $\C_b$ is a larger space than $\ca$, namely
\begin{equation*}
  \C_b'\;=\;\ba_n\;:=\;
  \braces{\mu:\mathcal{A}\to\mathbb{R}\,|\,\mu\text{ finitely additive and inner regular}}
\end{equation*}
where inner regular means ${\mu(A)=\sup\braces{\mu(F)|F\subset A\text{ closed}}}$ for all ${A\in\mathcal{A}}$.
We denote the dual norm by ${\|\mu\|':=\sup_f|\sp{f,\mu}|/\|f\|_{\sup}}$. For details on the dual pair
${\sp{\C_b,\ba_n}}$, see \citep{Aliprantis:Border:2006}. Those relevant for our purposes are:
\begin{itemize}
\item The set $\ca$ is the linear subspace ${\ca=\braces{\mu\in\ba_n|\mu\text{ countably additive}}}$ of $\ba_n$.
\item The sets ${\ca}$ and ${\P}$ are not weak* closed in $\ba_n$ (unless $\Omega$ is compact).
  The weak* closure of a set ${M\subset\P}$ may therefore contain elements of $\ba_n$ that are not
  countably additive.
\item The restriction of the weak* topology to $\P$ is the topology of convergence in distribution. In other
  words, a set ${M\subset\P}$ is closed iff there is a closed set ${M'\in\ba_n}$ such that ${M=M'\cap\P}$.
\item In contrast, $\ca$ and $\P$ are closed in the dual norm, and the restriction of $\|\argdot\|'$ to $\ca$
  is precisely the total variation norm.
\item Every continuous linear functional on $\C_b$ is of the form ${f\mapsto\sp{f,\mu}}$ for some ${\mu\in\ba_n}$. For ${\mu\in\ca}$,
  we have ${\sp{f,\mu}=\int fd\mu}$, by a version of Riesz' theorem.
\end{itemize}
Now assume again that $\group$ acts continuously on $\Omega$, and consider the induced actions ${f\mapsto f\circ\phi^{-1}}$ and
${\mu\mapsto\mu\circ\phi^{-1}}$ on functions and finitely additive measures.
By \eqref{eq:image:measure:int},
\begin{equation*}
  \sp{\phi(f),\phi(\mu)}\;=\;\mint (f\circ\phi^{-1})d(\phi_*\mu)\;=\;\mint fd\mu\;=\;\sp{f,\mu}
  \qquad\text{ for each }\mu\in\ca\;,
\end{equation*}
so the restriction of the action to the subspace $\ca$ behaves like a dual action, and we show as part of the next result
that duality extends to the entire space $\ba_n$. 
For a probability measure $P$, we can now define orbitopes by taking the closure $\orb'(P)$ of $\ch\group(P)$ in the dual norm topology of $\ba_n$,
or weak* closure $\orb_*(P)$. All our results on orbitopes in dual spaces now apply to $\orb_*(P)$ (though not to $\orb'(P)$, since the weak* topology
is consistent with the dual pair, whereas the norm topology is not).\nolinebreak

\begin{proposition}
  \label{lemma:cid:duality}
  Let $\group$ act continuously on a Polish space $\Omega$, and let $P$ be a probability measure. Then $\orb_*(P)$ is weak* compact, and 
  \begin{equation*}
    \orb_{\TV}(P)\;=\;\orb'(P)
    \qquad\text{ and }\qquad
    \orb_{\cid}(P)\;=\;\orb_*(P)\cap\P\qquad\text{ for each }P\in\P\;.
  \end{equation*}
  If $\orb_*(P)$ and $\orb_{\cid}(P)$ differ, this difference consists entirely of set functions that are not countably additive.
  Moreover, $\orb_*(P)$ and $\orb_{\cid}(P)$ have the same support function, and
  \begin{equation*}
    H(f|\orb_{\cid}(P))\;=\;H(f|\orb_*(P))\;=\;H(P|\orb(f))
    \qquad\text{ for each }P\in\P\text{ and }f\in\C_b\;.
  \end{equation*}
\end{proposition}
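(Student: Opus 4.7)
\emph{Plan.} The plan is to derive each assertion from the properties of the dual pair $\sp{\C_b,\ba_n}$ listed in the bullet points immediately preceding the proposition, combined with the duality machinery of \cref{sec:duality}. The key preliminary is that the change-of-variables formula \eqref{eq:image:measure:int} extends from $\ca$ to all of $\ba_n$ by the standard approximation of $f\in\C_b$ by simple functions, making $\mu\mapsto\phi_*\mu$ on $\ba_n$ the adjoint of the isometric action $f\mapsto f\circ\phi^{-1}$ on $\C_b$, i.e.\ the relation \eqref{dual:action} for the entire pair $\sp{\C_b,\ba_n}$. Since the action on $\C_b$ is isometric, \cref{lemma:alaoglu}(iii) then yields weak* compactness of $\orb_*(P)$; equivalently, every element of $\ch\group(P)$ has $\|\argdot\|'$-norm $1$ (by the bullet identifying $\|\argdot\|'$ on $\ca$ with $\|\argdot\|_{\TV}$), and $\orb_*(P)$ is weak* closed inside the Banach--Alaoglu compact dual unit ball.

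For $\orb_{\TV}(P)=\orb'(P)$ I would invoke the bullets that $\ca$ is $\|\argdot\|'$-closed in $\ba_n$ and that $\|\argdot\|'|_{\ca}=\|\argdot\|_{\TV}$: since $\ch\group(P)\subset\ca$, its $\|\argdot\|'$-closure in $\ba_n$ already lies in $\ca$ and coincides with its TV-closure. For $\orb_{\cid}(P)=\orb_*(P)\cap\P$, the bullet identifying the relative weak* topology on $\P$ with convergence in distribution reduces the claim to the purely topological fact that the relative closure of $\ch\group(P)$ inside $\P$ equals its ambient weak* closure in $\ba_n$ intersected with $\P$. The statement on the set-theoretic difference then follows because every $\mu\in\orb_*(P)$ is non-negative on $\{f\in\C_b\,|\,f\geq 0\}$ (as a weak* limit of such functionals), has $\mu(1)=1$ by weak* continuity at the constant function, and is inner regular by membership in $\ba_n$; if such a $\mu$ were also countably additive it would lie in $\ca\cap\{\mu\geq 0,\,\mu(\Omega)=1\}=\P$, and hence already in $\orb_*(P)\cap\P=\orb_{\cid}(P)$.

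The support-function identities then fall out directly. Each $f\in\C_b$ defines a functional $\mu\mapsto\sp{f,\mu}$ that is weak* continuous on $\ba_n$ and continuous for convergence in distribution on $\P$, so \eqref{eq:H:equals:sup} forces both $H(f|\orb_*(P))$ and $H(f|\orb_{\cid}(P))$ to equal $\sup_{\phi\in\group}\sp{f,\phi_*P}$, and the two thus coincide. The symmetry $H(f|\orb_*(P))=H(P|\orb(f))$ is then a direct application of \eqref{eq:support:function:duality} to the dual pair $\sp{\C_b,\ba_n}$ with the dual action established in the first paragraph.

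The main obstacle is justifying the use of the duality framework of \cref{sec:duality} over the enlarged space $\ba_n$ rather than the natural space $\ca$ in which the probability measures live. Because $\P$ is not weak* closed in $\ba_n$, the weak* closure of a subset of $\P$ may pick up non-countably-additive set functions, which is precisely the slack that can make $\orb_*(P)$ properly larger than $\orb_{\cid}(P)$. Once the change of variables is confirmed for all $\mu\in\ba_n$---so that $\sp{\C_b,\ba_n}$ really is a dual pair carrying dual actions---everything else amounts to bookkeeping against the bullet points and the identities \eqref{eq:H:equals:sup}, \eqref{eq:support:function:duality}.
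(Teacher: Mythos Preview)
Your proposal is correct and follows essentially the same route as the paper: isometry of the action on $\C_b$ plus \cref{lemma:alaoglu} for weak* compactness; norm-closedness of $\ca$ (the paper uses norm-closedness of $\P$, which is the same idea) for $\orb_{\TV}=\orb'$; the subspace-topology identity for $\orb_{\cid}=\orb_*\cap\P$; and \eqref{eq:H:equals:sup}--\eqref{eq:support:function:duality} for the support functions. The only stylistic difference is your argument for the set-theoretic difference: you verify directly that any $\mu\in\orb_*(P)$ is positive with $\mu(1)=1$, so countable additivity forces $\mu\in\P$, whereas the paper deduces $\orb_*(P)\cap\ca=\orb_*(P)\cap\P$ from the fact that $\P$ is closed in $\ca$ for convergence in distribution---both reach the same conclusion.
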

The compactness of weak* orbitopes is an instance of \cref{lemma:alaoglu}. 
Since compactness of $\orb_*(P)$ implies that ${\orb_*(P)\cap\P=\orb_{\cid}(P)}$ is closed, but not that it is compact,
this does not contradict \cref{lemma:G:tight}.

\section{Application I: Kernel mean embeddings}
\label{sec:mmd}

Mean embeddings of probability measures are used in machine learning to represent distributions
\citep[e.g.][]{Sejdinovic:Sriperumbudur:Gretton:Fukumizu:2013,Simon-Gabriel:Barp:Schoelkopf:Mackey:2023}.
Like a probability density, a mean embedding represents a probability measure $P$ as
a function. Whereas a density is defined relative to a reference measure $\mu$ and lives in $\L_1(\mu)$,
a mean embedding is defined relative to a reproducing kernel, and lives in an RKHS.

\subsection{Invariant optimal embeddings}

A Hilbert space $\mathbf{H}$ of functions ${x:\Omega\rightarrow\mathbb{R}}$ 
is a reproducing kernel Hilbert space, or \kword{RKHS}, if there is a
positive definite function ${\kappa:\Omega\times\Omega\rightarrow\mathbb{R}}$ such that
\begin{equation*}
  x(\omega)\;=\;\sp{x,\kappa(\omega,\argdot)}\qquad\text{ for all }x\in\mathbf{H}\text{ and }\omega\in\Omega\;.
\end{equation*}
If so, $\kappa$ is a \kword{reproducing kernel} for $\mathbf{H}$. For each ${\omega\in\Omega}$, the function
${\kappa(\omega,\argdot)}$ is an element of $\mathbf{H}$, and the map
${\Delta:\Omega\rightarrow\mathbf{H}}$ defined by ${\omega\mapsto\kappa(\omega,\argdot)}$ is
called the \kword{feature map} \citep{Steinwart:Christmann:2008}.
Let $P$ be a probability measure on $\Omega$. A function ${m(P)\in\mathbf{H}}$ is the
\kword{mean embedding} of $P$ if it satisfies either of the equivalent properties
\begin{equation*}
  \mint f dP\;=\;\sp{f,m(P)}\;\text{ for all }f\in\mathbf{H}
  \quad\text{ or }\quad
  m(P)\;=\;\mint\Delta(\omega)P(d\omega)\;,
\end{equation*}
where the integral on the right is again a Bochner integral \citep{Sejdinovic:Sriperumbudur:Gretton:Fukumizu:2013}.
We define
\begin{equation*}
\mathcal{M}\;:=\;
\braces{x\in\mathbf{H}\,|\,x\text{ is mean embedding of a probability measure}}\;.
\end{equation*}
A kernel $\kappa$ is \kword{bounded} if ${\sup_{\omega}\kappa(\omega,\omega)<\infty}$, and
\kword{characteristic} if the map ${m}$ is injective on its domain in ${\P=\P(\Omega)}$.
\\[.2em]
{\textbf{Convention}.}
Call a reproducing kernel \kword{nice} if it is continuous, bounded, characteristic, 
and the function ${\kappa(\argdot,\omega)}$ vanishes at infinity for each ${\omega\in\Omega}$.
\\[.2em]
Now suppose a group $\group$ acts on $\Omega$.
We consider the induced action ${x\mapsto x\circ\phi}$ on functions ${x:\Omega\to\mathbb{R}}$.
Our next result makes the following assumptions.
\begin{equation}
  \label{mmd:conditions}
  \begin{split}
    \text{$\Omega$ is locally }&\text{compact Polish, $\group$ is nice and acts continuously on $\Omega$,}
    \\[-.2em]
    &\text{and $\kappa$ is nice and diagonally $\group$-invariant.}
  \end{split}
\end{equation}
Nice kernels on locally compact spaces are a standard assumption in the mean embedding literature
that ensures $m(P)$ exists and is well-behaved
\citep{Sejdinovic:Sriperumbudur:Gretton:Fukumizu:2013,Simon-Gabriel:Barp:Schoelkopf:Mackey:2023}.
As we will see below, diagonal invariance ensures the natural action ${x\mapsto x\circ\phi}$ on
functions also defines an action on the RKHS, and that this action is unitary.
For mean embeddings, our general results can be assembled into the following result; the proof
is given at the end of this section.
\begin{theorem}
  \label{::mmd:day}
  If \eqref{mmd:conditions} holds, the maps ${x\mapsto x\circ\phi}$, for each ${\phi\in\group}$,
  define a unitary action of $\group$ on $\mathbf{H}$. If $\group$ is amenable, the following holds:\\[.5em]
  (i) If a  convex, weakly lsc and $\group$-invariant function ${f:\mathcal{M}\to\mathbb{R}\cup\braces{\infty}}$
  has a minimizer $\hat{x}$, it is also minimized by the
  embedding $m(P)$ of a $\group$-invariant probability measure $\bar{P}$ on $\Omega$.
  \\[.5em]
  (ii) Let ${E,F\subset\mathcal{M}}$ be $\group$-invariant sets, where $F$ is closed and convex. If
  \begin{equation*}
    \text{minimize }\quad H(x|E)\qquad\text{ subject to }y\in F
  \end{equation*}
  has a solution $\hat{x}$, it also has a solution $m(\bar{P})$ where $\bar{P}$ is $\group$-invariant.
  \\[.5em]
  In either case, ${\empavg_n(\hat{x})\xrightarrow{n\to\infty}m(\bar{P})}$ holds pointwise and in the norm of
  $\mathbf{H}$, and if $\hat{P}$ is a probability measure with ${m(\hat{P})=\hat{x}}$, then
  ${\empavg_n(\hat{P})\to\bar{P}}$ in distribution.
\end{theorem}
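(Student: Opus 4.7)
The plan is to promote the action on $\Omega$ to a unitary action on $\H$, then apply the machinery of \cref{sec:hilbert,sec:duality} to produce a $\group$-invariant element $\bar x$ of the orbitope $\orb(\hat x)$, and finally to lift $\bar x$ back into the set $\mathcal{M}$ of mean embeddings.

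For the first step I define $U_\phi x:=x\circ\phi^{-1}$ as in \eqref{transforming:functions}. Diagonal invariance of $\kappa$ gives the identity $\kappa(\phi\omega,c)=\kappa(\omega,\phi^{-1}c)$, which translates into $U_\phi\Delta(\omega)=\Delta(\phi\omega)$, i.e.\ the feature map is $\group$-equivariant. Then $\sp{U_\phi\Delta(\omega),U_\phi\Delta(\omega')}=\kappa(\phi\omega,\phi\omega')=\kappa(\omega,\omega')$, so $U_\phi$ is isometric on the dense span of feature maps and extends to a unitary on $\H$. Linearity and continuity of $U_\phi$ then yield $m(\phi_*P)=\int\Delta(\phi\omega)P(d\omega)=U_\phi m(P)$, so $P\in\P$ is $\group$-invariant if and only if $m(P)\in\H_\group$. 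With the action in place, \cref{result:orbitope:unitary:action} says $\orb(\hat x)$ is weakly compact and contains a unique invariant element $\bar x$; \cref{result:contraction}(i) gives $\empavg_n(\hat x)\to\bar x$ in the norm of $\H$, which via the reproducing bound $|y(\omega)-\bar x(\omega)|\le\|y-\bar x\|\sqrt{\kappa(\omega,\omega)}$ upgrades to pointwise convergence. \cref{theorem:day}(i) then gives $f(\bar x)\le\sup_{\phi}f(\phi\hat x)=f(\hat x)$ for part (i), while part (ii) follows by applying \cref{HS:general} to the self-dual pair $(\H,\H)$, using the same weak compactness to furnish a $\group$-invariant solution inside $\orb(\hat x)$.

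The main obstacle is realizing $\bar x$ as $m(\bar P)$ for a $\group$-invariant $\bar P\in\P$. Pick any $\hat P\in\P$ with $m(\hat P)=\hat x$ and set $Q_n:=\empavg_n(\hat P)$; since each $\phi_*\hat P$ is a probability measure and the Pettis integral of this $\P$-valued function against the uniform distribution $|\A_n|^{-1}|d\phi|$ on the compact set $\A_n$ remains in $\P$, we have $Q_n\in\P$, and by linearity and continuity of $m$, $m(Q_n)=\empavg_n(\hat x)\to\bar x$ in $\H$-norm. The hard step is tightness of $(Q_n)$: under \eqref{mmd:conditions}, $\kappa$ is a bounded characteristic kernel on a locally compact Polish space whose sections vanish at infinity, and it is a standard fact (e.g.\ the metrization results in \citep{Sejdinovic:Sriperumbudur:Gretton:Fukumizu:2013,Simon-Gabriel:Barp:Schoelkopf:Mackey:2023}) that norm convergence of mean embeddings forces the underlying measures to be tight. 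Combined with Prokhorov's theorem, $(Q_n)$ has a weak limit $\bar P\in\P$; continuity of $m$ under convergence in distribution gives $m(\bar P)=\bar x$, and the characteristic property forces uniqueness of the limit, so $Q_n\to\bar P$ in distribution along the full sequence. Finally, $\bar P$ inherits $\group$-invariance from $\bar x$ through injectivity and equivariance of $m$, completing the proof.
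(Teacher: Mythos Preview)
Your argument is correct and follows the same architecture as the paper: lift the action to a unitary action on $\H$ via diagonal invariance of $\kappa$, use weak compactness of orbitopes (\cref{result:orbitope:unitary:action}) together with \cref{theorem:day} and \cref{HS:general} to obtain an invariant $\bar x\in\orb(\hat x)$, and invoke the mean-ergodic/contraction machinery for norm and pointwise convergence of $\empavg_n(\hat x)$.

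The one place where you and the paper diverge is the passage from $\bar x$ to a $\group$-invariant probability measure $\bar P$. You reconstruct $\bar P$ from below: set $Q_n=\empavg_n(\hat P)$, argue tightness of $(Q_n)$ from MMD-Cauchyness, extract a weak limit via Prokhorov, and identify it with $\bar x$ by continuity and injectivity of $m$. The paper instead argues from above: $\mathcal M$ is closed, convex and $\group$-invariant (\cref{result:mean:embeddings:geometry}), so the orbitope $\orb(\hat x)$ is already contained in $\mathcal M$, whence $\bar x\in\mathcal M\cap\H_\group=\mathcal M_\group$ directly. Both routes ultimately rest on the same analytic input---that for nice kernels MMD metrizes weak convergence, so $\mathcal M$ is norm-closed---but the paper's orbitope-containment argument bypasses the explicit tightness discussion and makes the invariance of $\bar P$ immediate. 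Your version has the mild advantage of constructing $\bar P$ concretely as the weak limit of $\empavg_n(\hat P)$, which you need anyway for the final convergence statement; the paper recovers that convergence separately via $m\circ\empavg_n=\empavg_n\circ m$ and \cref{fact:mmd}.
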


\subsection{Tools: Results on invariant embeddings}
\label{sec:mmd:tools}

We now establish a few properties of invariance in reproducing spaces, which we then use to prove \cref{::mmd:day}.
The first two results do not require assumption \eqref{mmd:conditions}.
If a group $\group$ acts on $\Omega$, it always induces an action ${x\mapsto x\circ\phi}$ on functions ${x:\Omega\to\mathbb{R}}$,
but the restriction of this action to $\mathbf{H}$ is an action. A sufficient condition for this to be true is a diagonally
invariant kernel, which even makes the action unitary:
\begin{lemma}[Invariant kernels]
  \label{lemma:invariant:kernel}
  Let a $\group$ act on a set $\Omega$, and let $\kappa$ be a kernel on $\Omega$.
  If $\kappa$ is diagonally invariant, the maps ${x\mapsto x\circ\phi}$ define
  an action of $\group$ on $\mathbf{H}$. This action is unitary and makes the feature map equivariant,
  \begin{align*}
    \sp{x\circ\phi,y\circ\phi}
    \;&=\;
    \sp{x,y}
    \quad\text{ and }\qquad
    \Delta\circ\phi\;=\;\phi^{-1}\circ\Delta
    &&\text{ for all }\phi\in\group\;.
  \end{align*}
  If $\kappa$ is even separately invariant, so is the inner product,
  i.e.\ ${\sp{x\circ\phi,y\circ\psi}=\sp{x,y}}$ for all ${\phi,\psi\in\group}$,
  and $\Delta$ is $\group$-invariant.
\end{lemma}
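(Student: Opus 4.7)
The plan is to build the action on $\mathbf{H}$ from the bottom up, starting on the dense linear span of feature vectors $\mathbf{H}_0 := \text{span}\{\Delta(\omega) : \omega \in \Omega\}$ and extending by isometry to all of $\mathbf{H}$. The whole argument is driven by a single identity on feature vectors that uses diagonal invariance.

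\emph{Step 1: transformation of feature vectors.} Diagonal invariance says $\kappa(\phi\omega,\phi\omega')=\kappa(\omega,\omega')$ for every $\phi\in\group$. Reparametrizing, $\kappa(\omega,\phi\omega')=\kappa(\phi^{-1}\omega,\omega')$, which reads
\begin{equation*}
\Delta(\omega)\circ\phi \;=\; \Delta(\phi^{-1}\omega)\qquad\text{for each }\omega\in\Omega,\phi\in\group.
\end{equation*}
In particular the composition $\Delta(\omega)\circ\phi$ lives in $\mathbf{H}_0\subset\mathbf{H}$. Extending linearly, any $x=\sum_i c_i\Delta(\omega_i)\in\mathbf{H}_0$ satisfies $x\circ\phi=\sum_i c_i\Delta(\phi^{-1}\omega_i)\in\mathbf{H}_0$, so the map $x\mapsto x\circ\phi$ is a well-defined linear endomorphism of $\mathbf{H}_0$.

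\emph{Step 2: isometry on $\mathbf{H}_0$ and extension.} For $x=\sum_i a_i\Delta(\omega_i)$ and $y=\sum_j b_j\Delta(\omega'_j)$ in $\mathbf{H}_0$, the reproducing property and Step 1 give
\begin{equation*}
\sp{x\circ\phi,y\circ\phi}=\msum_{i,j}a_ib_j\kappa(\phi^{-1}\omega_i,\phi^{-1}\omega'_j)=\msum_{i,j}a_ib_j\kappa(\omega_i,\omega'_j)=\sp{x,y},
\end{equation*}
by diagonal invariance. So $x\mapsto x\circ\phi$ is an isometry on the dense subspace $\mathbf{H}_0$, and extends uniquely to an isometric linear map $U_\phi$ on $\mathbf{H}$. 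I must then verify that this abstract extension $U_\phi(x)$ agrees with the concrete pointwise composition $x\circ\phi$ for every $x\in\mathbf{H}$: pick $x_n\in\mathbf{H}_0$ with $x_n\to x$ in norm; since evaluation at any point is the continuous functional $\sp{\argdot,\Delta(\omega)}$, we get $x_n(\phi\omega)\to x(\phi\omega)$ and $x_n\circ\phi\to U_\phi(x)$ in $\mathbf{H}$, hence pointwise, forcing $U_\phi(x)(\omega)=x(\phi\omega)$.

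\emph{Step 3: group action and equivariance.} Pointwise, $((x\circ\phi)\circ\psi)(\omega)=x(\phi\psi\omega)$, so the operators $U_\phi$ satisfy the expected composition law and define a (right) action of $\group$; the identity element acts trivially because $\Delta(e\omega)=\Delta(\omega)$. Isometry from Step 2 plus polarization yield $\sp{x\circ\phi,y\circ\phi}=\sp{x,y}$ on all of $\mathbf{H}$, so the action is unitary. The equivariance $\Delta\circ\phi=\phi^{-1}\circ\Delta$ is exactly Step 1 rewritten in operator form.

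\emph{Step 4: separate invariance.} If $\kappa(\phi\omega_1,\psi\omega_2)=\kappa(\omega_1,\omega_2)$ for all $\phi,\psi$, then choosing $\phi=e$ and varying $\psi$ shows $\kappa(\omega_1,\psi\omega_2)=\kappa(\omega_1,\omega_2)$, i.e.\ $\Delta(\omega)\circ\psi=\Delta(\omega)$, which rewrites as $\Delta(\psi^{-1}\omega)=\Delta(\omega)$; so $\Delta$ is $\group$-invariant. The mixed-inner-product identity then follows by the same dense-span computation as in Step 2, now with $\kappa(\phi^{-1}\omega_i,\psi^{-1}\omega'_j)=\kappa(\omega_i,\omega'_j)$.

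The only genuinely delicate step is Step 2, where one must confirm that the abstract isometric extension coincides with the natural pointwise composition; the key fact making this work is that evaluation at a point is continuous on $\mathbf{H}$, so norm convergence implies pointwise convergence and pins the extension down. Everything else is algebraic manipulation of feature vectors using diagonal (resp.\ separate) invariance.
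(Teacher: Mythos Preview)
Your proof is correct and follows essentially the same route as the paper's: work on the dense span $\mathbf{H}_0$ of feature vectors, use diagonal invariance to show $\Delta(\omega)\circ\phi=\Delta(\phi^{-1}\omega)$, establish inner-product invariance there, and extend by continuity. Your Step~2 is in fact slightly more careful than the paper's version, which simply says the identity ``extends to $\mathbf{H}$ by continuity'' without explicitly checking that the abstract isometric extension agrees with the concrete pointwise map $x\mapsto x\circ\phi$; your use of continuity of point evaluation to pin this down is a worthwhile addition.
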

\begin{proof}
  See \cref{proofs:sec:mmd}.
\end{proof}
We already know, by \cref{fact:mean:ergodic:theorem} that \Folner averages on Hilbert
space are well-behaved. If the space is reproducing, convergence holds even pointwise, and we can obtain
a reproducing kernel for the subspace $\mathbf{H}_\group$ of invariant elements as a limit:
\begin{lemma}[\Folner averages in reproducing spaces]
  \label{RKHS:ergodic:theorem}
  Let an amenable group $\group$ act measurably on a measurable space $\Omega$, and let $\kappa$ be a measurable and diagonally invariant kernel
 on $\Omega$. For each ${x\in\mathbf{H}}$, 
    \begin{align*}
      \empavg_n(x)\;=\;\mfrac{1}{|\A_n|}\mint_{\A_n}x\circ\phi|d\phi|
      \quad\xrightarrow{n\rightarrow\infty}\quad
      \bar{x}\qquad\text{ pointwise and in the norm of }\mathbf{H}\;,
    \end{align*}
    where $\bar{x}$ is the projection of $x$ onto the closed linear subspace $\mathbf{H}_\group$. The limit
    \begin{equation*}
      \bar{\kappa}(\omega,\upsilon)\;:=\;
      \lim_n\,\mfrac{1}{|\A_n|}\mint_{\A_n}\!\!\kappa(\omega,\psi\upsilon)|d\psi|
      \;=\;
      \empavg_n(\kappa(\omega,\argdot))(\upsilon)
      \qquad\text{ for }\omega,\upsilon\in\Omega
    \end{equation*}
    is a reproducing kernel for $\mathbf{H}_\group$. It is separately $\group$-invariant and measurable, and is continuous
    if $\kappa$ is continuous.
\end{lemma}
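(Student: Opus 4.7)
The plan is to deduce everything from the mean ergodic theorem on the RKHS combined with the reproducing property, which lets pointwise information about $\bar\kappa$ be recovered from Hilbert-space convergence.

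First I would establish the convergence statement. By \cref{lemma:invariant:kernel}, diagonal invariance of $\kappa$ makes $x\mapsto x\circ\phi$ a unitary action on $\mathbf{H}$, so the mean ergodic theorem (\cref{fact:mean:ergodic:theorem}) yields norm convergence $\empavg_n(x)\to\bar{x}$, where $\bar{x}=Px$ is the orthogonal projection of $x$ onto the closed invariant subspace $\mathbf{H}_\group$. Pointwise convergence follows immediately from the reproducing property and Cauchy--Schwarz,
\[
  |\empavg_n(x)(\omega)-\bar{x}(\omega)|
  \;=\;|\sp{\empavg_n(x)-\bar{x},\kappa(\omega,\argdot)}|
  \;\leq\;\|\empavg_n(x)-\bar{x}\|\sqrt{\kappa(\omega,\omega)}\;.
\]
Applying this to $x=\kappa(\omega,\argdot)\in\mathbf{H}$, and pulling the pointwise evaluation at $\upsilon$ inside the Pettis integral via $\sp{\empavg_n(y),\kappa(\upsilon,\argdot)}=\empavg_n(y)(\upsilon)$, identifies the two expressions for $\bar\kappa$ in the statement and shows the limit exists.

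Next I would verify that $\bar\kappa$ is the reproducing kernel of $\mathbf{H}_\group$. Since $\bar\kappa(\omega,\argdot)=P\kappa(\omega,\argdot)\in\mathbf{H}_\group$, self-adjointness of the projection gives, for any $x\in\mathbf{H}_\group$,
\[
  \sp{x,\bar\kappa(\omega,\argdot)}
  \;=\;\sp{Px,\kappa(\omega,\argdot)}
  \;=\;\sp{x,\kappa(\omega,\argdot)}
  \;=\;x(\omega)\;.
\]
Separate $\group$-invariance of $\bar\kappa$ then needs checking. Invariance in the second argument is automatic because $\bar\kappa(\omega,\argdot)\in\mathbf{H}_\group$. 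For the first argument, $\mathbf{H}_\group$ and its orthocomplement are both invariant under the unitary action, so $P$ commutes with every $\phi$; combined with the feature map equivariance $\Delta\circ\phi=\phi^{-1}\circ\Delta$ from \cref{lemma:invariant:kernel}, this yields $P\kappa(\phi\omega,\argdot)=\phi^{-1}P\kappa(\omega,\argdot)=\bar\kappa(\omega,\argdot)$, using that elements of $\mathbf{H}_\group$ are fixed by $\phi^{-1}$.

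Finally I would settle measurability and continuity. Measurability is immediate from Fubini applied to $(\phi,\upsilon)\mapsto\kappa(\omega,\phi\upsilon)$ together with the pointwise convergence in the first step. For continuity, note that continuity of $\kappa$ implies norm-continuity of the feature map $\omega\mapsto\kappa(\omega,\argdot)$ via $\|\kappa(\omega,\argdot)-\kappa(\omega',\argdot)\|^2=\kappa(\omega,\omega)-2\kappa(\omega,\omega')+\kappa(\omega',\omega')$. Boundedness of $P$ then makes $\omega\mapsto\bar\kappa(\omega,\argdot)=P\kappa(\omega,\argdot)$ norm-continuous into $\mathbf{H}$, and the identity $\bar\kappa(\omega,\upsilon)=\sp{\bar\kappa(\omega,\argdot),\kappa(\upsilon,\argdot)}$ (reproducing property of $\kappa$ applied to the element $\bar\kappa(\omega,\argdot)\in\mathbf{H}$) yields joint continuity by continuity of the inner product. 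The main obstacle is precisely this continuity claim: a pointwise limit of continuous partial averages is not generally continuous, so one must bypass the approximating sequence entirely and exploit the Hilbert-space factorization $\bar\kappa(\omega,\argdot)=P\kappa(\omega,\argdot)$ to transport continuity from $\kappa$ to $\bar\kappa$ through the bounded operator $P$.
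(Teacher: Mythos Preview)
Your proof is correct and follows the same backbone as the paper: invoke the mean ergodic theorem via \cref{lemma:invariant:kernel} for norm convergence, upgrade to pointwise via continuity of evaluation, identify $\bar\kappa(\omega,\argdot)$ as the projection $P\kappa(\omega,\argdot)$, and deduce the reproducing property from that. The differences lie in two subsidiary verifications. For separate invariance, you argue that $P$ commutes with the unitary action and then use feature-map equivariance, whereas the paper computes directly from the limit definition using $\lim_n\empavg_n(x\circ\phi)=\lim_n\empavg_n(x)$; both work, and yours is arguably cleaner. For measurability and continuity, the paper simply cites a general fact from \citep[4.24, 4.28]{Steinwart:Christmann:2008}---a kernel is measurable (resp.\ continuous) iff every function in its RKHS is, and this is inherited by closed subspaces---while you give explicit arguments via Fubini and the factorization $\bar\kappa(\omega,\upsilon)=\sp{P\kappa(\omega,\argdot),\kappa(\upsilon,\argdot)}$. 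Your direct route is self-contained and makes visible why the pointwise-limit obstacle you flag is not actually an issue; the paper's route is shorter but relies on an external reference.
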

\begin{proof}
  See \cref{proofs:sec:mmd}.
\end{proof}
Nice kernels are used in the mean embedding literature because they guarantee good properties of embeddings. Here
is a summary of such guarantees:
\begin{fact}[{\citep{Sejdinovic:Sriperumbudur:Gretton:Fukumizu:2013,Simon-Gabriel:Barp:Schoelkopf:Mackey:2023}}]
  \label{fact:mmd}
  Let $\Omega$ be Polish and locally compact, and let $\kappa$ be a nice kernel.
  Then every probability measure on $\Omega$ has a unique mean embedding, the map ${P\mapsto m(P)}$ is one-to-one, and
  the function ${\text{\rm MMD}(P,Q):=\|m(P)-m(Q)\|}$, called the maximum mean discrepancy,
  is a metric on $\P$ that metrizes convergence in distribution.
\end{fact}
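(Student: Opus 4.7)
The statement bundles four claims: existence and uniqueness of $m(P)$; that $m$ is one-to-one; that $\mathrm{MMD}$ is a metric; and that it metrizes convergence in distribution. The first three are short and will be handled together; the metrization claim, and especially its harder direction, is where the work lies.

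\textbf{Step 1 (existence, uniqueness, metric).} For any $P \in \P$, the plan is to construct $m(P) = \int \Delta(\omega)P(d\omega)$ as a Bochner integral. Boundedness of $\kappa$ gives $\|\Delta(\omega)\|^2 = \kappa(\omega,\omega) \leq C$, so $\Delta$ is norm-bounded. Continuity of $\kappa$ and the reproducing property imply $\Delta$ is norm-continuous, hence strongly measurable on the Polish space $\Omega$. These two facts put $\Delta$ in the Bochner class $L^1(P;\mathbf{H})$, so the integral exists and defines $m(P) \in \mathbf{H}$; testing against $f \in \mathbf{H}$ via the reproducing property gives $\sp{f,m(P)} = P(f)$, which characterizes $m(P)$ uniquely. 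Injectivity of $m : \P \to \mathbf{H}$ is exactly the characteristic property in the definition of a nice kernel. Consequently $\mathrm{MMD}(P,Q) = \|m(P) - m(Q)\|$ inherits the axioms of a pseudo-metric from the Hilbert norm, and injectivity upgrades the separation axiom to a metric.

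\textbf{Step 2 (convergence in distribution $\Rightarrow$ $\mathrm{MMD}\to 0$).} The plan is to realize $\mathrm{MMD}(P_n,P) = \sup_{\|f\|\leq 1} |P_n(f) - P(f)|$ and control this supremum uniformly. Suppose $P_n \Rightarrow P$. Prokhorov's theorem supplies, for each $\varepsilon > 0$, a compact $K \subset \Omega$ with $\sup_n P_n(K^c), P(K^c) < \varepsilon$. On $K$, the unit ball $B_1 \subset \mathbf{H}$ is uniformly bounded (by $\sqrt C$) and equicontinuous (continuity of $\kappa$ on $K \times K$ together with the reproducing identity $|f(\omega) - f(\omega')|^2 \leq \|f\|^2\cdot(\kappa(\omega,\omega) - 2\kappa(\omega,\omega') + \kappa(\omega',\omega'))$), so Arzelà–Ascoli makes $\{f|_K : f \in B_1\}$ totally bounded in $C(K)$. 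Reducing to a finite $\varepsilon$-net of test functions, pointwise convergence of integrals gives uniform smallness; the tails contribute at most a constant times $\varepsilon$ using $\|f\|_{\sup} \leq \sqrt C$.

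\textbf{Step 3 (the hard direction: $\mathrm{MMD}\to 0$ $\Rightarrow$ convergence in distribution).} This is the main obstacle and it splits naturally into two substeps. First, density: the ``nice kernel'' assumption (bounded, continuous, characteristic, and $\kappa(\argdot,\omega)$ vanishing at infinity) forces $\mathbf{H} \subseteq C_0(\Omega)$ and, crucially, $\mathbf{H}$ is sup-norm dense in $C_0(\Omega)$; this is the standard ``$c_0$-universal'' upgrade of ``characteristic'' in the locally compact Polish setting and is the technical content extracted from the cited references. Given this, $\mathrm{MMD}(P_n,P) \to 0$ yields $P_n(f) \to P(f)$ uniformly over the unit ball of $\mathbf{H}$, and sup-density extends this to $P_n(f) \to P(f)$ for every $f \in C_0(\Omega)$—vague convergence. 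Second, tightness: the implication from vague to weak convergence requires $\{P_n\}$ to be tight. To establish this, I would choose a partition-of-unity-style $\varphi_K \in C_c(\Omega)$ with $1_K \leq \varphi_K \leq 1$, approximate $\varphi_K$ in sup norm by $g_K \in \mathbf{H}$, and combine $P_n(g_K) \to P(g_K)$ with $P(\varphi_K) \to 1$ (using inner regularity on the Polish space $\Omega$) to conclude $\liminf_n P_n(K) \geq 1 - \varepsilon$ for a suitable compact $K$. Tightness plus vague convergence of $\{P_n\}$ to a probability limit yields $P_n \Rightarrow P$ by Prokhorov, completing the equivalence.

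\textbf{Main obstacle.} The genuinely nontrivial step is the combination of $c_0$-universality (sup-density of $\mathbf{H}$ in $C_0(\Omega)$) with the tightness argument in Step 3: characteristic alone gives injectivity but not enough functions to test against, and vanishing at infinity alone gives $\mathbf{H} \subset C_0$ but not density. It is the conjunction of both, built into the definition of ``nice kernel,'' that makes the proof go through; everything else is Hilbert-space bookkeeping plus Prokhorov.
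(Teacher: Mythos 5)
\cref{fact:mmd} is imported by the paper as a citation: there is no in-paper proof to compare against, so your reconstruction has to be judged on its own terms (and against the unlabeled density lemma in \cref{proofs:sec:mmd}, which is where the paper itself uses the same machinery). Steps 1 and 2 are fine. Existence/uniqueness of the Bochner integral from boundedness and norm-continuity of $\Delta$, injectivity from the characteristic property, and the metric axioms are all correct. Your Arzel\`a--Ascoli argument for the direction ``convergence in distribution $\Rightarrow$ $\mathrm{MMD}\to 0$'' works, though it is heavier than necessary: since
\begin{equation*}
  \mathrm{MMD}(P_n,P)^2=\mint\kappa\,d(P_n\otimes P_n)-2\mint\kappa\,d(P_n\otimes P)+\mint\kappa\,d(P\otimes P)
\end{equation*}
and $\kappa$ is bounded and jointly continuous, convergence of the product measures in distribution already gives the conclusion without any net or tightness argument.

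The genuine gap is exactly where you locate it, but it is larger than you allow: the step ``characteristic $\Rightarrow$ $\mathbf{H}$ is sup-norm dense in $\C_0(\Omega)$'' is not a ``standard upgrade.'' Characteristic, as defined in this paper, means the embedding is injective on $\P$; density of $\mathbf{H}$ in $\C_0(\Omega)$ is equivalent (via Riesz and Hahn--Banach) to injectivity of the extended embedding on \emph{all} finite signed Radon measures, i.e.\ to $c_0$-universality. Injectivity on the convex set $\P$ does not imply injectivity on its span when $\mathbf{H}$ does not contain the constants (which it cannot if all its elements vanish at infinity): from $\alpha m(\mu_+)=\beta m(\mu_-)$ one cannot conclude $\alpha=\beta$ without testing against $\mathbf{1}$. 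The cited reference of Simon-Gabriel, Barp, Sch\"olkopf and Mackey is devoted precisely to this distinction and shows the two notions genuinely differ for bounded continuous kernels. So as a self-contained proof, Step 3 has a hole at its first substep; it closes only if ``characteristic'' is read in the stronger sense (injective on $\ca$), which is evidently what the paper's convention intends, since the same leap is made in the density lemma of \cref{proofs:sec:mmd}. Your tightness argument in the second substep (Urysohn function on a compact set with $P$-mass $1-\varepsilon$, approximated in sup norm from $\mathbf{H}$, then Prokhorov) is correct as written, granted the density.
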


We note two consequences that will prove useful in the next section:
\begin{lemma}[Properties of the embedding map]
  \label{lemma:mmd:isometry}
  If $\Omega$ is locally compact Polish and $\kappa$ is nice,
  the map ${m:\P\rightarrow\mathcal{M}}$ is a linear isometry if $\P$ is metrized by MMD. For each ${P\in\P}$, the
  image measure under the feature map has barycenter ${\sp{\Delta_*P}=m(P)}$. The embedding map is equviariant,
  ${m(P)\circ\tau^{-1}=m(\tau_*P)}$, under every 
  measurable bijection ${\tau:\Omega\to\Omega}$ that leaves $\kappa$ diagonally invariant.
\end{lemma}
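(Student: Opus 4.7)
The plan is to verify each assertion—isometry, linearity, the barycenter identity, and equivariance—as a direct consequence of the Bochner integral representation $m(P)=\int \Delta\,dP$ together with the reproducing property of $\kappa$.

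First, the isometry claim is immediate from \cref{fact:mmd}: by definition $\text{\rm MMD}(P,Q)=\|m(P)-m(Q)\|$, so $m$ is distance preserving between $(\P,\text{\rm MMD})$ and $(\mathcal{M},\|\argdot\|)$. Linearity on the convex set $\P$ means preservation of convex combinations, which follows immediately from linearity of the Bochner integral: $m(tP+(1-t)Q)=\int \Delta\,d(tP+(1-t)Q)=t\,m(P)+(1-t)\,m(Q)$.

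For the barycenter identity $\sp{\Delta_{*}P}=m(P)$, I would test against an arbitrary continuous linear $\ell:\mathbf{H}\to\mathbb{R}$. By the Riesz representation theorem, $\ell=\sp{\argdot,f}$ for some $f\in\mathbf{H}$. The change-of-variables formula \eqref{eq:image:measure:int} together with the reproducing property then gives
\[
  \int \ell\,d(\Delta_{*}P)
  \;=\;\int \sp{\Delta(\omega),f}\,P(d\omega)
  \;=\;\int f(\omega)\,P(d\omega)
  \;=\;\sp{f,m(P)}
  \;=\;\ell(m(P)),
\]
where the penultimate equality uses the defining property of the mean embedding. Since this holds for every continuous linear $\ell$, the defining property of the barycenter yields $\sp{\Delta_{*}P}=m(P)$.

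Finally, equivariance under $\tau$ is a one-line change-of-variables calculation. For each $\omega\in\Omega$, \eqref{eq:image:measure:int} gives $m(\tau_{*}P)(\omega)=\int \kappa(\omega,\tau\upsilon)\,P(d\upsilon)$. Diagonal invariance of $\kappa$ (applied with $\phi=\tau^{-1}$) together with the symmetry of $\kappa$ yields $\kappa(\omega,\tau\upsilon)=\kappa(\tau^{-1}\omega,\upsilon)$, so $m(\tau_{*}P)(\omega)=m(P)(\tau^{-1}\omega)$, as required. None of these steps is a serious obstacle; the only mild subtleties are keeping track of where $\tau^{-1}$ acts and using that the barycenter is defined through a weak (Pettis) integral, so that testing against Riesz representers suffices.
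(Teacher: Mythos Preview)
Your proof is correct and, for linearity, isometry, and the barycenter identity, it matches the paper's argument essentially line for line (the paper phrases the barycenter step as ``$h$ commutes with the integral'' rather than invoking Riesz explicitly, but this is the same computation).

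For equivariance you take a slightly different route than the paper. The paper first invokes \cref{lemma:invariant:kernel} to conclude that $x\mapsto x\circ\tau^{-1}$ is unitary on $\mathbf{H}$, and then tests the identity $m(P)\circ\tau^{-1}=m(\tau_*P)$ against all $x\in\mathbf{H}$ via $\sp{m(P)\circ\tau^{-1},x}=\sp{m(P),x\circ\tau}=\int x\circ\tau\,dP=\int x\,d(\tau_*P)$. Your argument instead computes $m(\tau_*P)(\omega)$ pointwise using the kernel formula and diagonal invariance directly. Your version is more elementary---it avoids the detour through \cref{lemma:invariant:kernel}---while the paper's version has the virtue of making explicit that equivariance is really a consequence of the unitary structure. (Incidentally, you do not need symmetry of $\kappa$: diagonal invariance with $\phi=\tau^{-1}$ applied to the pair $(\omega,\tau\upsilon)$ already gives $\kappa(\omega,\tau\upsilon)=\kappa(\tau^{-1}\omega,\upsilon)$.)
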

\begin{proof}
  See \cref{proofs:sec:mmd}.
\end{proof}

\subsection{The set of invariant mean embeddings}

We now consider mean embeddings of $\group$-invariant measures, and in particular the geometry of the set
\begin{equation*}
\mathcal{M}_\group\;=\;m(\P_\group)\qquad\text{ where }\P_\group\;=\;\braces{P\in\P(\Omega)\,|\,P\text{ is $\group$-invariant}}\;.
\end{equation*}
This set may be empty, since some actions do not have invariant probability measures.
If \eqref{mmd:conditions} holds, $m$ is equivariant by \cref{lemma:mmd:isometry}, and
we have ${\mathcal{M}_\group=\mathcal{M}\cap\mathbf{H}_\group}$. In other words,
$\mathcal{M}_\group$ can equivalently be defined
as the set of mean embeddings that are $\group$-invariant as functions on $\Omega$.

Before we describe the geometry of $\mathcal{M}_\group$, we consider that of the larger set
$\mathcal{M}$. Since $\mathcal{M}$ is an isometric image of $\P$, it inherits geometric properties of 
$\P$ (which can be found in \citep{Aliprantis:Border:2006}):
\begin{corollary}
  \label{result:mean:embeddings:geometry}
  If $\Omega$ is locally compact Polish and $\kappa$ nice, $\mathcal{M}$ is a closed convex subset of $\mathbf{H}$,
  and its extreme points are precisely the mean embeddings of point masses.
  If ${F\subset\Omega}$ is closed, the set
  ${\mathcal{M}(F):=\braces{m(P)\,|\,P(F)=1}}$ is a closed face of the convex set $\mathcal{M}$.
  The map ${\omega\mapsto m(\delta_{\omega})}$ is an isomorphism of $\Omega$ and $\ex\,\mathcal{M}$.
\end{corollary}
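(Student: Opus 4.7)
The plan is to transfer the convex-geometric structure of $\P$ to $\mathcal{M}$ through the linear isometry $m:(\P,\text{MMD})\to(\mathcal{M},\|\argdot\|_{\mathbf{H}})$ supplied by \cref{lemma:mmd:isometry}, using that MMD metrizes convergence in distribution (\cref{fact:mmd}) and invoking standard facts about $\P$ on a Polish space from \citep{Aliprantis:Border:2006}. Convexity of $\mathcal{M}=m(\P)$ is immediate from linearity of $m$ and convexity of $\P$. For closedness, suppose $(m(P_n))\subset\mathcal{M}$ converges in norm to some $y\in\mathbf{H}$; then $(P_n)$ is Cauchy in MMD, and the niceness of $\kappa$ (in particular, boundedness and the $C_0$ decay of $\kappa(\argdot,\omega)$) forces $(P_n)$ to be tight, so by Prokhorov a subsequence converges in distribution to some $P\in\P$; continuity of $m$ (isometry) gives $m(P)=y$, hence $y\in\mathcal{M}$.

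For the extreme points, since $m$ is an affine bijection between the convex sets $\P$ and $\mathcal{M}$, it sends extreme points to extreme points, so $\ex\mathcal{M}=m(\ex\P)$. On a Polish space the extreme points of $\P$ are exactly the Dirac measures, hence $\ex\mathcal{M}=\braces{m(\delta_{\omega})\,|\,\omega\in\Omega}$.

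For the face statement, I use the standard fact that $\P(F):=\braces{P\in\P\,|\,P(F)=1}$ is a face of $\P$ for every Borel $F$: if a nontrivial convex combination ${\lambda P_1+(1-\lambda)P_2}$ assigns full mass to $F$, both components must do so. For closed $F$, $P\mapsto P(F)$ is upper semicontinuous under convergence in distribution, so $\P(F)$ is closed. Because $m$ is an affine homeomorphism onto $\mathcal{M}$ and preserves faces, $\mathcal{M}(F)=m(\P(F))$ is a closed face of $\mathcal{M}$.

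Finally, $\omega\mapsto\delta_{\omega}$ is a classical homeomorphism from a Polish space $\Omega$ onto $\ex\P$ (with the subspace topology from convergence in distribution). Composing with $m$, which by \cref{lemma:mmd:isometry} is a homeomorphism $\P\to\mathcal{M}$ carrying $\ex\P$ onto $\ex\mathcal{M}$, delivers the desired isomorphism $\omega\mapsto m(\delta_{\omega})$ between $\Omega$ and $\ex\mathcal{M}$. The main obstacle is the closedness of $\mathcal{M}$ in $\mathbf{H}$: being an isometric image of a Polish metric space does not automatically make $\mathcal{M}$ closed, and one genuinely needs to upgrade a norm-Cauchy sequence of embeddings to a tight (hence weakly convergent) sequence of measures, which is precisely where the $C_0$ and boundedness clauses of ``nice'' enter.
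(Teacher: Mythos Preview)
Your proposal is correct and follows exactly the paper's approach: the corollary is stated as an immediate transfer of the convex geometry of $\P$ (extreme points are point masses, $\P(F)$ is a closed face for closed $F$, $\omega\mapsto\delta_\omega$ is a homeomorphism) through the linear isometry $m$ of \cref{lemma:mmd:isometry}, with the relevant facts about $\P$ sourced from \citep{Aliprantis:Border:2006}. Your write-up is more explicit than the paper's one-line justification, in particular on the closedness of $\mathcal{M}$, where you correctly note that an isometric image of a Polish space need not be closed and that the tightness of an MMD-Cauchy sequence genuinely uses the $C_0$ clause of ``nice'' (this is exactly what the reference \citep{Simon-Gabriel:Barp:Schoelkopf:Mackey:2023} supplies).
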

The smaller ${\mathcal{M}_\group\subset\mathcal{M}}$ is again convex, and can also be characterized by its extreme points:
\begin{theorem}
  \label{result:mmd:extremal}
  If \eqref{mmd:conditions} holds, ${\mathcal{M}_\group}$ is a closed convex subset of ${\mathbf{H}}$,
  and its set of extreme points is measurable.
  For each $\group$-invariant probability measure $P$ on $\Omega$, the following are equivalent:\\[.5em]
  (i) $m(P)$ is an extreme point of $\mathcal{M}_\group$.\\[.5em]
  (ii) For each $\group$-invariant ${x\in\mathbf{H}}$,
  \begin{equation*}
    \sp{m(P),x}\;=\;x(\omega)\qquad\text{ holds for }P\text{-almost all }\omega\in\Omega\;.
  \end{equation*}
  (iii) $P$ is an extreme point of $\P_\group$.
  \\[.5em]
  A function ${x:\Omega\to\mathbb{R}}$ is in $\mathcal{M}_\group$ if and only if ${x=\int_{\ex\mathcal{M}_\group}z\mu_x(dz)}$
  for some probability measure $\mu_x$ on ${\ex\,\mathcal{M}_\group}$. If so, $\mu_x$ uniquely determined by $x$.
\end{theorem}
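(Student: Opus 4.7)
The strategy is to transport everything across the isometric isomorphism $m:\P\to\mathcal{M}$ of \cref{lemma:mmd:isometry} and then invoke the ergodic decomposition theorem for the integral representation.

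The first observation is that $\mathcal{M}_\group = \mathcal{M}\cap\mathbf{H}_\group$: by the equivariance in \cref{lemma:mmd:isometry}, $m(P)$ is $\group$-invariant whenever $P$ is, and conversely $m(P)\in\mathbf{H}_\group$ gives $m(\phi_*P)=m(P)\circ\phi^{-1}=m(P)$ for every $\phi\in\group$, so $\phi_*P=P$ by injectivity of $m$ (the kernel is characteristic). Since $\mathcal{M}$ is closed convex in $\mathbf{H}$ by \cref{result:mean:embeddings:geometry} and $\mathbf{H}_\group$ is a closed subspace by \cref{theorem:day}(iii), the set $\mathcal{M}_\group$ is closed and convex. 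The restriction $m:\P_\group\to\mathcal{M}_\group$ is then a linear bijection between convex sets, which maps extreme points to extreme points; this already proves the equivalence (i)$\Leftrightarrow$(iii).

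For (iii)$\Rightarrow$(ii), every $\group$-invariant $x\in\mathbf{H}$ is measurable, $\group$-invariant, and bounded, since $|x(\omega)|=|\sp{x,\kappa(\omega,\argdot)}|\leq\|x\|\sqrt{\kappa(\omega,\omega)}$ and $\kappa$ is bounded. The classical characterization of extreme points of $\P_\group$ (ergodicity: trivial invariant $\sigma$-algebra) forces every such $x$ to be $P$-a.s.\ equal to the constant $P(x)=\sp{m(P),x}$. For (ii)$\Rightarrow$(iii), suppose $P=tP_1+(1-t)P_2$ with $P_1,P_2\in\P_\group$ and $t\in(0,1)$. By (ii), each invariant $x\in\mathbf{H}$ is $P$-a.s.\ equal to $P(x)$, hence also $P_i$-a.s.\ equal to $P(x)$, so $\sp{m(P_i),x}=P_i(x)=P(x)=\sp{m(P),x}$ for every $x\in\mathbf{H}_\group$. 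Since $m(P_i)-m(P)$ itself lies in $\mathbf{H}_\group$, plugging $x=m(P_i)-m(P)$ yields $\|m(P_i)-m(P)\|^2=0$; then injectivity of $m$ gives $P_1=P_2=P$.

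For measurability of $\ex\mathcal{M}_\group$, note that $m$ is a homeomorphism when $\P$ is topologized by MMD (equivalently, convergence in distribution, by \cref{fact:mmd}). It therefore suffices to show that $\ex\P_\group$ is Borel in the Polish space $\P_\group$, which is the standard fact that ergodic invariant measures form a Borel set (they can be cut out by countably many conditions from a countable generator of the Borel $\sigma$-algebra on $\Omega$). The integral representation then follows from the ergodic decomposition theorem: every $P\in\P_\group$ admits a unique probability measure $\nu_P$ on $\ex\P_\group$ with $P=\int Q\,\nu_P(dQ)$. Setting $\mu_x:=m_*\nu_P$ for $x=m(P)$ gives a probability measure on $\ex\mathcal{M}_\group$, and pairing with any $y\in\mathbf{H}$ verifies the barycentric identity
\begin{equation*}
  \sp{x,y}\;=\;P(y)\;=\;\mint Q(y)\,\nu_P(dQ)\;=\;\mint\sp{z,y}\,\mu_x(dz)\;,
\end{equation*}
so $x=\int z\,\mu_x(dz)$ in the weak (Pettis) sense; uniqueness of $\mu_x$ follows from uniqueness of $\nu_P$ and injectivity of $m$. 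The main obstacle is the ergodic decomposition step, which requires the appropriate version for a nice amenable group action on a Polish space, together with checking that the induced barycenter in $\mathbf{H}$ is indeed $m(P)$; everything else is a routine translation through the isomorphism $m$.
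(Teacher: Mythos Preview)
Your overall architecture coincides with the paper's: establish $\mathcal{M}_\group=\mathcal{M}\cap\mathbf{H}_\group$, use that $m$ is an affine isometry to transfer extremality and the ergodic decomposition from $\P_\group$ to $\mathcal{M}_\group$, and read off measurability of $\ex\mathcal{M}_\group$ and the unique integral representation by pushing forward along $m$.

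The one substantive difference is in the implication (ii)$\Rightarrow$(iii). The paper proves this via an auxiliary density lemma: it first shows that $\mathbf{H}$ is dense in $\C_b(\Omega)$ and hence in $\L_1(P)$, deduces that $\group$-invariant elements of $\mathbf{H}$ are $\L_1$-dense in $\group$-invariant elements of $\L_1(P)$, and concludes that if all invariant $x\in\mathbf{H}$ are $P$-a.s.\ constant then so are all invariant $\L_1$ functions, i.e.\ $P$ is ergodic. Your argument bypasses this entirely: from $P=tP_1+(1-t)P_2$ you get $\sp{m(P_i)-m(P),x}=0$ for all $x\in\mathbf{H}_\group$, then take $x=m(P_i)-m(P)\in\mathbf{H}_\group$ to force $m(P_i)=m(P)$. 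This is cleaner and uses only the Hilbert structure plus injectivity of $m$; the paper's route, in return, yields the stronger intermediate fact that $\mathbf{H}$ is dense in $\L_1(P)$, which may be of independent interest but is not needed for the theorem itself.
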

\begin{proof}
  See \cref{proofs:sec:mmd}.
\end{proof}
The final statement of the result says that $m(P)$ is the barycenter $\sp{\eta_P}$. Since $\mathcal{M}_\group$ is closed, convex and metrizable,
this is almost a special case of Choquet's theorem, except for the fact that $\mathcal{M}_\group$ need not be compact. The extreme points of $\mathcal{P}_\group$ in
(iii) are characterized by the ergodic decomposition theorem (see e.g.\ A1.3 in \citep{Kallenberg:2005}, 8.20 in \citep{Einsiedler:Ward:2011}),
which can be summarized as follows:
\begin{fact}
  \label{::ergodic:decomposition}
  If a nice group $\group$ acts measurably on a Polish space $\Omega$, the set ${\ex\,\P_\group}$ is measurable.
  A measure ${P\in\P}$ is $\group$-invariant if and only if ${P=\int_{\ex\P_\group}Q\mu_P(dQ)}$ for
  some probability measure $\mu_P$ on ${\ex\,\P_\group}$.
  This measure is uniquely determined by $P$. A measure ${Q\in\P}$ is in ${\ex\,\P_\group}$ if and only if it is $\group$-ergodic.
\end{fact}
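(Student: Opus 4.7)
The plan is to argue in three main steps: first, characterize ergodic measures as the extreme points of $\P_\group$; second, construct the decomposition by taking a regular version of the conditional expectation on the invariant $\sigma$-algebra $\Sigma_\group$; third, derive measurability of $\ex\,\P_\group$ and uniqueness of $\mu_P$. For step one, if $Q \in \P_\group$ is not ergodic, some $A \in \Sigma_\group$ satisfies $0 < Q(A) < 1$, and invariance of $A$ makes $Q(\argdot|A)$ and $Q(\argdot|A^c)$ both $\group$-invariant, so $Q$ decomposes nontrivially in $\P_\group$. Conversely, if $Q$ is ergodic and $Q = \alpha Q_1 + (1-\alpha) Q_2$ with $Q_i \in \P_\group$, then $Q_i \ll Q$ and $dQ_1/dQ$ has a $\Sigma_\group$-measurable version (since both $Q_1$ and $Q$ are invariant), which must be $Q$-a.s.\ constant by ergodicity, forcing $Q_1 = Q$.

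For step two, fix $P \in \P_\group$; since $\Omega$ is Polish, $P(\argdot \mid \Sigma_\group)$ admits a regular version, yielding a measurable map $\omega \mapsto P_\omega$ from $\Omega$ into $\P$. To show $P_\omega$ is $\group$-invariant for $P$-a.e.\ $\omega$, fix $\phi \in \group$ and Borel $A$; the function $f_{\phi,A}(\omega) := P_\omega(\phi^{-1}A) - P_\omega(A)$ is $\Sigma_\group$-measurable, and for any invariant $B$,
\[
\mint_B f_{\phi,A}\, dP \;=\; P(B \cap \phi^{-1}A) - P(B \cap A) \;=\; P(\phi^{-1}(B \cap A)) - P(B \cap A) \;=\; 0
\]
by $\group$-invariance of $P$ and $B$, so $f_{\phi,A} = 0$ $P$-a.s. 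Running over a countable dense subgroup of the second-countable $\group$ and a countable generating algebra of Borel subsets of $\Omega$, aggregating countably many null sets and promoting from the subgroup to all of $\group$ via joint measurability of the action, yields $P_\omega \in \P_\group$ for $P$-a.e.\ $\omega$. For ergodicity, pick a countable generating $\pi$-system $B_1,B_2,\ldots$ of $\Sigma_\group$; invariance of $B_n$ gives $P_\omega(B_n) = \mathbf{1}_{B_n}(\omega) \in \braces{0,1}$ for $P$-a.e.\ $\omega$, and a Dynkin-class argument extends this to all of $\Sigma_\group$, so $P_\omega$ is ergodic $P$-a.s. Defining $\mu_P := (\omega \mapsto P_\omega)_* P$, the tower property of conditional expectation gives $P(A) = \mint Q(A)\, \mu_P(dQ)$ for every Borel $A$, which is the required decomposition.

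For step three, with $\braces{B_n}$ as above,
\[
\ex\,\P_\group \;=\; \P_\group \cap \medcap\nolimits_n \braces{Q \in \P \,|\, Q(B_n) \in \braces{0,1}}
\]
is a countable intersection of Borel subsets of the Polish space $\P$ and hence Borel-measurable. Uniqueness follows from mutual singularity of distinct ergodic measures: for any decomposition $P = \mint Q\, \nu(dQ)$ with $\nu$ concentrated on $\ex\,\P_\group$, the canonical $P_\omega$ from step two must agree $P$-a.s.\ with the ergodic measure in the mixture that generated $\omega$, since on the invariant "typical set" $S_Q := \braces{\omega : Q_\omega = Q}$ of each ergodic $Q$, distinct ergodics contribute no mass and the conditional probability $P(\argdot \mid \Sigma_\group)$ restricted to $S_Q$ coincides with $Q$ itself. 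Thus $\nu = (\omega \mapsto P_\omega)_* P = \mu_P$.

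The main obstacle I anticipate is the countability/measurability bookkeeping in step two: specifically, showing that $\Sigma_\group$ is countably generated (up to $P$-null sets) and that $P$-a.s.\ invariance under a countable dense subgroup $\group_0 \subset \group$ promotes to $P$-a.s.\ invariance under all of $\group$. Both rely crucially on the niceness of $\group$ (second-countable and locally compact) and on joint measurability of the action on Polish $\Omega$; without these, one cannot aggregate countably many exceptional null sets into a single one, and the entire conditional-probability construction breaks down.
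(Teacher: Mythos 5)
The paper does not actually prove this statement: it is quoted as a classical fact, with the proof deferred to the cited references (A1.3 in Kallenberg and 8.20 in Einsiedler--Ward), so there is no in-paper argument to compare against. Your sketch follows the standard textbook route --- identify $\ex\,\P_\group$ with the ergodic measures, disintegrate $P$ via a regular version of $P(\argdot\,|\,\Sigma_\group)$, push forward along $\omega\mapsto P_\omega$ to obtain $\mu_P$, and derive uniqueness from mutual singularity of distinct ergodic measures --- and the steps you spell out in detail (non-ergodic implies non-extreme; the computation showing $\mint_B f_{\phi,A}\,dP=0$ for invariant $B$) are correct.

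Two points are genuine gaps rather than bookkeeping. First, ``promoting from the subgroup to all of $\group$ via joint measurability'' does not work as stated: for a merely measurable action, invariance of $P_\omega$ under a countable dense subgroup says nothing about the remaining group elements, since there is no continuity of $\phi\mapsto\phi_*P_\omega$ to exploit. You need either to retopologize so that the action becomes continuous (the Becker--Kechris theorem, which the paper itself invokes as \cref{fact:becker:kechris}, after which density does suffice), or a Fubini argument over Haar measure showing that for $P$-a.e.\ $\omega$ the stabilizer $\braces{\phi\,|\,\phi_*P_\omega=P_\omega}$ is a conull measurable subgroup and hence equals $\group$; the same issue affects your claim that $dQ_1/dQ$ admits a genuinely $\Sigma_\group$-measurable version in the ``ergodic implies extreme'' direction. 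Second, $\Sigma_\group$ is in general not countably generated; it is countably generated only modulo $P$-null sets. That suffices for assertions holding $P$-a.s., but it breaks the two places where you test the family $\braces{B_n}$ against a measure other than $P$: concluding that $P_\omega$ itself is ergodic requires $P_\omega(A)\in\braces{0,1}$ for \emph{every} invariant $A$, and $P_\omega$-null sets are not $P$-null sets; and the displayed formula for $\ex\,\P_\group$ uses a family chosen relative to one fixed $P$ to certify ergodicity of every $Q\in\P_\group$, which it cannot do. You correctly flag this as the main obstacle, but it is precisely where the classical proofs do real work (a measure-independent countable test family, or a separability argument applied to each $P_\omega$), so the proof is not complete as it stands.
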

de Finetti's theorem is an example of \cref{::ergodic:decomposition} where ${\Omega=I^{\mathbb{N}}}$ is a space of
sequences, say on ${I=[0,1]}$, and $\group$ is the group of finitely supported permutations acting on the sequence
indices. By de Finetti's theorem, $P$ is exchangeable---that is, $\group$-invariant---if and only if
${P=\int_{\P(I)}R^{\infty}\nu_P(dR)}$ for some probability measure $\nu_P$ on $\P(I)$.
In other words, the ergodic measures are precisely the distributions ${Q=R^{\otimes\infty}}$
of i.i.d.~sequences. The proof of \cref{result:mmd:extremal} shows that $m$ maps ${\ex\,\P_\group}$ isometrically to
${\ex\,\mathcal{M}_\group}$, and $\eta_P$ is the image measure ${m_*\mu_P}$. We therefore have
\begin{equation*}
  P\text{ is $\group$-invariant }
  \quad\Leftrightarrow\quad
  m(P)\;=\;\mint_{\ex\mathcal{M}_\group}z\eta_P(dz)\;=\;\sp{m_*\mu_P}\;.
\end{equation*}
With the properties of mean embeddings established above, \cref{::mmd:day} becomes an example of our general results:
\begin{proof}[Proof of {\cref{::mmd:day}}]
  We collect a few facts we have already established:
  \begin{itemize}
  \item By \eqref{lemma:invariant:kernel}, the action is well-defined an unitary. In particular,
    it has bounded orbits, and all orbitopes are weakly compact, by \cref{result:orbitope:unitary:action}.
  \item The invariant element of each orbitope $\orb(x)$ is the projection $\bar{x}$
    of $x$ onto $\mathbf{H}_\group$, again by \cref{result:orbitope:unitary:action}.
  \item The set $\mathcal{M}$ is norm-closed and convex by \eqref{result:mean:embeddings:geometry},
    and therefore weakly closed, since the norm and weak topology have the same closed convex sets.
    It is $\group$-invariant, since $\P$ is $\group$-invariant.
  \end{itemize}
  If $\hat{x}$ is the minimizer of $f$ in (i), $\orb(\hat{x})$ contains a $\group$-invariant
  minimizer $\bar{x}$ by \cref{theorem:day}. (ii) has a $\group$-invariant minimizer ${\bar{x}\in\orb(\hat{x})}$
  by \cref{HS:general}.
  By \cref{RKHS:ergodic:theorem}, ${\empavg_n(\hat{x})\to\bar{x}}$ holds pointwise and in norm.
  Since $\mathcal{M}$ is closed convex and $\group$-invariant, and since
  ${\bar{x}}$ is the projection of $\hat{x}$, we have
  \begin{equation*}
    \bar{x}\in\orb(\hat{x})\subset\mathcal{M}
    \qquad\text{ and therefore }\qquad
    \bar{x}\in\mathcal{M}\cap\mathbf{H}_\group\;=\;\mathcal{M}_\group\;,
  \end{equation*}
  so $\bar{x}$ is the embedding of a $\group$-invariant distribution $\bar{P}$.
  Since $m$ is an isometry, it commutes with the Bochner integral, which shows
  \begin{equation*}
    m\Bigl(\mfrac{1}{|\A_n|}\mint_{\A_n}\phi_*P|d\phi|\Bigr)\;=\;\mfrac{1}{|\A_n|}\mint_{\A_n}m(P)\circ\phi|d\phi|
    \quad\text{ or in short }\quad
    m\circ\empavg_n\;=\;\empavg_n\circ m\;.
  \end{equation*}
  By \cref{fact:mmd}, convergence in norm implies ${\empavg_n(\hat{P})\to\bar{P}}$ in distribution.
\end{proof}
\begin{remark*}
  Although the vector space $\text{span}\,\P$ has not inner product structure, and therefore no notion
  of orthogonal projection, the identity ${m\circ\empavg_n=\empavg_n\circ m}$ in the proof above shows that
  \begin{equation*}
    \lim\empavg_n(P)\;=\;\bigl(m^{-1}\circ\text{(projection onto $\mathbf{H}_\group$)}\circ m\bigr)\,(P)\;.
  \end{equation*}
  We can hence read the limit on the left as a form of projection onto $\P_\group$.
\end{remark*}

\section{Application II: Invariant couplings}
\label{sec:mk}

Let $P_1$ and $P_2$ be probability measures on Polish spaces $\Omega_1$ and
$\Omega_2$. A \kword{coupling} of these measures is a joint
distributions $P$ on ${\Omega:=\XtimesY}$ with marginals $P_1$ and $P_2$.
Let ${\Lambda=\Lambda(P_1,P_2)}$ be the set of all such couplings.
A \kword{cost} is a lsc function ${c:\XtimesY\rightarrow[0,\infty]}$, 
and the \kword{risk} of a coupling $P$ is the expectation $P(c)$. Regarded as a functional
${P\mapsto P(c)}$, the risk is linear and lsc on $\Lambda$.

\subsection{Invariance}

Suppose a group $\group$ acts measurably on $\Omega_1$ and on $\Omega_2$,
and leaves $P_1$ and $P_2$ invariant. The actions define an action on the product space,
\begin{equation*}
  \phi(\omega_1,\omega_2)\;:=\;(\phi\otimes\phi)(\omega_1,\omega_2)\;=\;(\phi \omega_1,\phi \omega_2)
  \qquad\text{ for }(\omega_1,\omega_2)\in\Omega\text{ and }\phi\in\group\;.
\end{equation*}
The set of couplings of $P_1$ and $P_2$ invariant under this action is then
\begin{equation*}
  \Lambda_\group\;:=\;\Lambda\cap\P_\group
  \quad\text{ where }\quad
  \P_\group\;:=\;\braces{P\in\P(\Omega)\,|\,(\phi\otimes\phi)_*P=P\text{ for all }\phi\in\group}
\end{equation*}
Since invariance of the marginals does not imply couplings are invariant (see \cref{example:transport}(ii)),
the set $\Lambda_\group$ is in general a proper subset of $\Lambda$. It is non-emtpy, since the product measure
${P_1\otimes P_2}$ is always invariant. Elements of $\Lambda_\group$ are known as
\kword{joinings} in ergodic theory, and various aspects of $\Lambda_\group$ are well-studied \citep{Glasner:2003}.

\subsection{The set of invariant couplings}

In light of \cref{theorem:day}, we are interested in how a linear lsc functionals, such as the risk,
behave on the extreme points of $\Lambda_\group$. Denote by
$\Sigma=\Sigma_\group$ the $\sigma$-algebra of $\group$-invariant Borel sets in $\Omega$, and
by $\Sigma_i$ the $\group$-invariant $\sigma$-algebra on $\Omega_i$. Recall from \cref{sec:L1} that $\L_p(\Sigma,P)$
is the subspace of $\group$-invariant elements of $\L_p(P)$.
\begin{theorem}[Extremal invariant couplings]
  \label{result:extremal:couplings}
  Let a nice group $\group$ act measurably on $\Omega_1$ and on $\Omega_2$,
  let $P_1$ and $P_2$ be $\group$-invariant, and let ${P\in\Lambda_\group}$. Then the following are equivalent:
  \\[.2em]
  (i) $P$ is an extreme point of $\Lambda_\group$.
  \\[.2em]
  (ii) The set 
  $\braces{g_1+g_2\,|\,g_i\in\L_1(\Sigma_i,P_i)}$ is norm-dense in $\L_1(\Sigma,P)$.
  \\[.2em]
  (iii) There is no non-zero ${f\in\L_\infty(\Sigma,P)}$ such that
  \begin{equation*}
    \mint(g_1+g_2)fdP\;=\;0\qquad\text{ for all }g_i\in\L_1(\Sigma_i,P_i)\;.
  \end{equation*}
  (iv) There is no non-zero ${f\in\L_\infty(\Sigma,P)}$ such that, for ${(\xi_1,\xi_2)\sim P}$,
  \begin{equation*}
    \label{eq:parthasarathy:condition}
    \mean[f(\xi_1,\xi_2)|\xi_1]=0=\mean[f(\xi_1,\xi_2)|\xi_2]
    \qquad\text{almost surely.}
  \end{equation*}
\end{theorem}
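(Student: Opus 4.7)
The plan is to prove the four-way equivalence by splitting it along three natural bridges: (ii)$\Leftrightarrow$(iii) by Hahn--Banach, (iii)$\Leftrightarrow$(iv) by a conditional-expectation argument on the $\group$-invariant $\sigma$-algebras, and (i)$\Leftrightarrow$(iii) by the classical Douglas--Lindenstrauss--Parthasarathy perturbation argument adapted to the invariant setting.

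For (ii)$\Leftrightarrow$(iii): First I would observe that the subspace $V := \{g_1+g_2 \mid g_i \in \L_1(\Sigma_i,P_i)\}$ embeds contractively in $\L_1(\Sigma,P)$ via the marginal bound $\|g_1+g_2\|_{\L_1(P)} \leq \|g_1\|_{\L_1(P_1)}+\|g_2\|_{\L_1(P_2)}$. By Hahn--Banach, $V$ is norm-dense iff the only continuous functional annihilating $V$ is zero. Since $\L_1(\Sigma,P)^* = \L_\infty(\Sigma,P)$, these functionals are integration against invariant bounded $f$, and the annihilation condition is exactly the one in (iii).

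For (iii)$\Leftrightarrow$(iv): For invariant $f \in \L_\infty(\Sigma,P)$ I would introduce $h_i(\omega_i) := \mean[f(\xi_1,\xi_2) \mid \xi_i=\omega_i]$. A short change of variables using invariance of $P$, $P_i$ and $f$ shows $h_i$ is itself $\Sigma_i$-measurable (i.e.\ $\group$-invariant on $\Omega_i$). By the tower property, $\int g_i f\,dP = \int g_i h_i\,dP_i$ for every $g_i \in \L_1(\Sigma_i,P_i)$; testing against $g_i := \mathrm{sign}(h_i)$, which is bounded and invariant, shows this integral vanishes for all invariant $g_i$ iff $h_i=0$ $P_i$-a.s., that is, $\mean[f\mid\xi_i]=0$. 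Decoupling (iii)'s condition by setting $g_2=0$ and then $g_1=0$ yields the equivalence.

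For (i)$\Leftrightarrow$(iii): If $P=(P^++P^-)/2$ with distinct $P^\pm \in \Lambda_\group$, the densities $\rho^\pm := dP^\pm/dP$ lie in $[0,2]$ and are $\Sigma$-measurable because $P$ and $P^\pm$ are invariant; then $f:=\rho^+-1$ is nonzero, invariant and bounded, and matching marginals of $P^\pm$ and $P$ gives $\int(g_1+g_2)f\,dP=0$ for all bounded $g_i$, violating (iii). Conversely, given $f$ as in (iii) (equivalently (iv)), I would rescale so that $\|f\|_\infty \leq 1$ and set $P^\pm := (1\pm f)P$. These are nonnegative, invariant (product of invariants), of total mass $1$ (since $\int f\,dP = \int \mean[f\mid\xi_1]\,dP_1 = 0$ by (iv)), and have the prescribed marginals (since $\int g_i f\,dP = \int g_i \mean[f\mid\xi_i]\,dP_i = 0$ by (iv)). Thus $P^\pm$ are distinct members of $\Lambda_\group$ with $P=(P^++P^-)/2$, so $P$ is not extreme.

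The main delicate point is invariance-tracking: the density $\rho^\pm$ must inherit $\group$-invariance from $P$ and $P^\pm$, and the conditional expectations $h_i$ must inherit invariance from $f$, $P$ and $P_i$. Without these two small lemmas the perturbation $P^\pm$ could leak outside $\Lambda_\group$, and the passage from invariant-$g_i$ orthogonality to vanishing conditional expectation would fail. This is precisely what distinguishes the invariant refinement above from the classical Douglas--Lindenstrauss--Parthasarathy statement.
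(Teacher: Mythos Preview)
Your proposal is correct, and for (ii)$\Leftrightarrow$(iii) and (iii)$\Leftrightarrow$(iv) it matches the paper's argument (annihilator duality and the tower property, respectively). The substantive difference is in (i)$\Leftrightarrow$(iii). The paper first invokes the Becker--Kechris theorem to upgrade the measurable action to a continuous one, so that $\Lambda_\group$ becomes compact metrizable, and then applies Choquet's theorem to get a representing measure $\mu_P$ on $\ex\Lambda_\group$; splitting $\mu_P$ over a set of intermediate mass produces the perturbing signed measure $\nu$. Your direct Douglas-type construction $P^\pm=(1\pm f)P$ bypasses both Becker--Kechris and Choquet: it uses only Radon--Nikodym and the definition of extreme point, so the measurable-action case is handled immediately rather than by reduction. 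This is a genuine simplification. One point you correctly isolate, and which the paper glosses over, is that in (iii)$\Leftrightarrow$(iv) the passage from ``$\int g_i f\,dP=0$ for all \emph{invariant} $g_i$'' to ``$\mean[f\mid\xi_i]=0$'' requires first checking that $h_i=\mean[f\mid\xi_i]$ is itself $\group$-invariant on $\Omega_i$; you need this same fact again to verify that $P^\pm$ has the correct marginals on \emph{all} Borel sets, not just invariant ones, which is why your route through (iv) before (i)$\Leftrightarrow$(iii) is the right order.
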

\begin{proof}
  See \cref{proofs:sec:mk}. The result generalizes results of J. Lindenstrauss and
  K.~R. Parthasarthy; see \cref{sec:related} for a comparison and references.
\end{proof}

\subsection{Invariant optimal couplings}

As an application of \cref{theorem:day,result:extremal:couplings}, we recall the Monge-Kantorovich theorem
\citep[e.g.][]{Rachev:Rueschendorf:1998}: There is a coupling $P^*$ that minimizes the risk over $\Lambda$ and satisfies
\begin{equation}
  \label{eq:MK}
  \inf\braces{P(c)\,|\,P\in\Lambda}
  \;=\;
  P^*(c)
  \;=\;
  \sup\braces{P_1(f_1)+P_2(f_2)\,|\,(f_1,f_2)\in\Gamma(c)}\;,
\end{equation}
where the supremum is taken over the set of minorants
\begin{equation*}
  \Gamma(c)\,:=\,\braces{f_1+f_2\leq c\text{ holds }P_1\otimes P_2\text{--a.s.}\,|\,f_i\in\L_1(P_i)\text{ for }i=1,2}\;.
\end{equation*}
Suppose we restrict $\Lambda$ to the subset $\Lambda_\group$, and $\Gamma(c)$ similarly to the subset
of $\group$-invariant minorants
\begin{equation*}
  \Gamma_\group(c)\;:=\;\braces{(f_1,f_2)\in\Gamma(c)\,|\,f_i\in\L_1(\Sigma_i,P_i)}\;.
\end{equation*}
This may turn the equality
\eqref{eq:MK} into an inequality ${\inf_{\Lambda_\group}>\sup_{\Gamma_\group}}$.
For risks invariant under an amenable group, there is no such duality gap:
\begin{corollary}[Invariant optimal couplings]
  \label{result:kantorovich}
  Let a nice amenable group $\group$ act
  continuously on $\Omega_1$ and $\Omega_2$. If $P_1$ and $P_2$ are
  $\group$-invariant and a cost $c$ satisfies
  \begin{equation}
    \label{eq:risk:kantorovich}
    (\phi\otimes\phi_*P)(c)\;=\;P(c)
    \qquad
    \text{ for all }\phi\in\group\text{ and }P\in\Lambda\;,
  \end{equation}
  the risk ${P\mapsto P(c)}$ is minimized over $\Lambda$ 
  by a $\group$-invariant coupling $\bar{P}$ that satisfies 
  \begin{equation*}
    \inf\braces{P(c)\,|\,P\in\Lambda_\group}
    \;=\;
    \bar{P}(c)\;=\;
    \sup\braces{P_1(f_1)+P_2(f_2)\,|\,(f_1,f_2)\in\Gamma_{\group}(c)}\;.
  \end{equation*}
  This coupling can always be chosen as an extreme point of $\Lambda_{\group}$.
\end{corollary}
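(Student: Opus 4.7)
The plan is to reduce the statement to \cref{theorem:day}(iii) on the primal side and the classical Monge--Kantorovich equality \eqref{eq:MK} on the dual side. The set $\Lambda$ of couplings is convex in $\P(\Omega)$, and $\group$-invariant since $P_1$ and $P_2$ are. Tightness of each marginal on the Polish space $\Omega_i$ forces tightness of $\Lambda$ (a standard union-bound argument on product compacta), so $\Lambda$ is compact for convergence in distribution. The risk $P\mapsto P(c)$ is linear, lower semi-continuous (because $c$ is lsc and nonnegative), and $\group$-invariant by assumption \eqref{eq:risk:kantorovich}.

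Applying \cref{theorem:day}(iii) with $K=\Lambda$ and $\ell(P)=P(c)$ yields
\[
\inf_{P\in\Lambda}P(c)\;=\;\inf_{P\in\Lambda_\group}P(c)\;=\;\bar P(c),
\]
with the common value attained at an extreme point $\bar P$ of $\Lambda_\group$. Combining with the Monge--Kantorovich identity \eqref{eq:MK} gives $\bar P(c)=\sup_{\Gamma(c)}[P_1(f_1)+P_2(f_2)]$; this handles the primal side of the corollary and the extremality of $\bar P$.

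The remaining task is the dual-side refinement $\sup_{\Gamma(c)}=\sup_{\Gamma_\group(c)}$. The inequality $\geq$ is trivial. For $\leq$, my strategy is to invariantize a near-optimal $(f_1,f_2)\in\Gamma(c)$ by \Folner averaging: every translate $(f_1\circ\phi,f_2\circ\phi)$ has the same value of $L(f_1,f_2):=P_1(f_1)+P_2(f_2)$ because $P_i$ is $\group$-invariant, so each partial average
\[
\tfrac{1}{|\A_n|}\!\!\mint_{\A_n}(f_1\circ\phi,f_2\circ\phi)|d\phi|
\]
retains that value. A subsequence converges weakly in $\L_1(P_1)\oplus\L_1(P_2)$ to a pair with $\group$-invariant components, by a \cref{theorem:day}(i)-style argument applied to the product action, whose orbitope is weakly compact because the norm is invariant and the orbit is $\L_1$-bounded.

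The main obstacle is preserving the pointwise constraint $g_1+g_2\leq c$ $\,P_1\otimes P_2$-a.s.\ along averaging, since \eqref{eq:risk:kantorovich} asserts invariance of $c$ only as a functional on $\Lambda$, not as a function on $\Omega$. I would circumvent this by replacing $c$ at the outset with a $\group$-invariant lsc representative $\bar c$ satisfying $P(\bar c)=P(c)$ for every $P\in\Lambda$, obtained as a subsequential \Folner limit of the pointwise translates $c\circ(\phi\otimes\phi)$ in $\L_1(\nu)$ for a jointly $\group$-invariant dominating measure $\nu$ on $\Omega_1\!\times\!\Omega_2$. With $\bar c$ in place of $c$, the set $\Gamma(\bar c)$ is genuinely $\group$-invariant and stable under convex averaging, so the argument above produces an invariant limit inside it. Finally, the extreme-point characterisation \cref{result:extremal:couplings}(ii) of $\bar P$ furnishes the density of $\group$-invariant minorants needed to transfer the equality from $\Gamma_\group(\bar c)$ back to $\Gamma_\group(c)$ and close the chain of equalities.
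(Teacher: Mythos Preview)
Your primal-side argument is exactly the paper's: $\Lambda$ is compact, convex, $\group$-invariant; the risk is linear, lsc, $\group$-invariant; apply \cref{theorem:day}(iii) to get an extreme point $\bar P\in\Lambda_\group$ with $\bar P(c)=\inf_\Lambda P(c)$, then invoke \eqref{eq:MK}.

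The dual-side detour, however, has real problems. First, the assertion that the orbitope of $(f_1,f_2)$ in $\L_1(P_1)\oplus\L_1(P_2)$ is weakly compact ``because the norm is invariant and the orbit is $\L_1$-bounded'' is false: $\L_1$ is not reflexive, so norm-bounded sets are not weakly relatively compact without uniform integrability, which you have not established. \cref{lemma:alaoglu}(iii) gives weak* compactness in the \emph{dual} space, not weak compactness in $\L_1$. Second, the construction of an invariant lsc surrogate $\bar c$ relies on a jointly $\group$-invariant $\sigma$-finite dominating measure $\nu$ on $\Omega_1\times\Omega_2$, whose existence is nowhere guaranteed by the hypotheses, and on preserving lower semicontinuity under an $\L_1(\nu)$ subsequential limit, which it generally does not.

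The paper avoids all of this. Once $\bar P$ is known to be an extreme point of $\Lambda_\group$, it invokes \cref{result:extremal:couplings}(ii) directly: the density of sums $g_1+g_2$ with $g_i\in\L_1(\Sigma_i,P_i)$ in $\L_1(\Sigma,\bar P)$ is what yields $\sup_{\Gamma(c)}=\sup_{\Gamma_\group(c)}$, without any averaging of minorants or modification of $c$. You do reach for \cref{result:extremal:couplings}(ii) in your final sentence, but only as a patch after the detour; in fact that density statement is the whole argument, and the \Folner-averaging and $\bar c$ construction should be dropped entirely.
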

\begin{proof}
  The convex set $\Lambda$ is compact in $\P(\XtimesY)$
  \citep[][2.2.1]{Rachev:Rueschendorf:1998}. It is also $\group$-invariant, since $P_1$ and $P_2$ are.
  The linear lsc functional ${g(P):=P(c)}$ on $\Lambda$ is $\group$-invariant by \eqref{eq:risk:kantorovich}.
  By \cref{theorem:day}, the subset $\Lambda_\group$ is compact and
  convex, and ${\min_{\ex\Lambda}g=\min_{\ex\Lambda_\group}}$.
  If $\bar{P}$ is an extreme point of $\Lambda_\group$ at which the minimum is attained, then
  \begin{equation*}
    \bar{P}(c)=\sup\nolimits_{\,\Gamma(c)}P_1\otimes P_2=\sup\nolimits_{\,\Gamma_\group(c)}P_1\otimes P_2
  \end{equation*}
  where the first identity holds by the Monge-Kantorovich theorem, and the second
  because $\Gamma_\group(c)$ is dense in $\Gamma(c)$ by \cref{result:extremal:couplings}.
\end{proof}

We may also trade off invariance of the marginals against invariance of $c$. The next
result considers marginals that are not invariant, but close enought to being so that
their total variation orbitopes contain invariant elements.
\begin{corollary}
  \label{result:approximate:tv:coupling}
  Let $Q_1$ and $Q_2$ be two probability measures that are not invariant, but have compact total variation orbitopes.
  Suppose the risk is even separately invariant,
  \begin{equation*}
    Q(c\circ\phi\otimes\psi)\;=\;Q(c)\qquad\text{ for all }\phi,\psi\in\group\text{ and }Q\in\Lambda(Q_1,Q_2)\;.
  \end{equation*}
  If $\group$ is amenable, restricting $\Gamma(c)$ to $\Gamma_\group(c)$ does not introduce a duality gap,
  \begin{equation*}
  \inf\braces{Q(c)\,|\,Q\in\Lambda(Q_1,Q_2)}\;=\;\sup\braces{Q_1(f_1)+Q_2(f_2)\,|\,(f_1,f_2)\in\Gamma_{\group}(c)}\;.
\end{equation*}
\end{corollary}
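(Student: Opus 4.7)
The plan is to produce invariant surrogates $\bar Q_1,\bar Q_2$ for the marginals, reduce the two-sided Monge--Kantorovich identity to these surrogates via \cref{result:kantorovich}, and then transfer the supremum back to $Q_1,Q_2$ using that invariant test functions pair identically against $Q_i$ and $\bar Q_i$. First, compactness of $\orb_{\TV}(Q_i)$ together with \cref{theorem:day} supplies a $\group$-invariant $\bar Q_i\in\orb_{\TV}(Q_i)$, realized as the TV-limit of a subsequence of F\o lner averages $\empavg_{n_k}(Q_i)$. Since F\o lner averaging preserves the integral of any $\group$-invariant integrand, $Q_i$ and $\bar Q_i$ coincide on the invariant $\sigma$-algebra $\Sigma_i$, so $Q_i(f_i)=\bar Q_i(f_i)$ for every invariant $f_i\in\L_1(\Sigma_i,Q_i)$.

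The central step is to prove $V(Q_1,Q_2):=\inf\{Q(c)\,|\,Q\in\Lambda(Q_1,Q_2)\}=V(\bar Q_1,\bar Q_2)$. For $V(\bar Q_1,\bar Q_2)\le V(Q_1,Q_2)$, I would take any $Q\in\Lambda(Q_1,Q_2)$ and form the double F\o lner average
\[
\tilde Q_n\;:=\;\mfrac{1}{|\A_n|^2}\mint_{\A_n\times\A_n}(\phi\otimes\psi)_*Q\,|d\phi||d\psi|,
\]
whose marginals are $\empavg_n(Q_1)$ and $\empavg_n(Q_2)$, and observe that separate invariance of $c$ gives $\tilde Q_n(c)=Q(c)$. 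Tightness is inherited from the (TV-)convergent marginals, and any subsequential weak limit $\tilde Q$ lies in $\Lambda(\bar Q_1,\bar Q_2)$ with $\tilde Q(c)\le\liminf_n\tilde Q_n(c)=Q(c)$ by lower semicontinuity of $c$. For the reverse inequality I would exploit that $c$ is separately invariant and hence $\Sigma_1\otimes\Sigma_2$-measurable: given $\tilde Q\in\Lambda(\bar Q_1,\bar Q_2)$, lift it to a coupling of $Q_1,Q_2$ by
\[
Q(d\omega_1,d\omega_2)\;:=\;\mint Q_1(d\omega_1\,|\,\Sigma_1)(\eta_1)\,Q_2(d\omega_2\,|\,\Sigma_2)(\eta_2)\,\tilde Q(d\eta_1,d\eta_2),
\]
with regular conditional probabilities guaranteed by the Polish hypothesis. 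Because $Q_i|_{\Sigma_i}=\bar Q_i|_{\Sigma_i}$, the marginals of $Q$ come out as $Q_1,Q_2$; because $c$ is $\Sigma_1\otimes\Sigma_2$-measurable, $Q(c)=\tilde Q(c)$.

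Applying \cref{result:kantorovich} to $(\bar Q_1,\bar Q_2,c)$ then gives $V(\bar Q_1,\bar Q_2)=\sup\{\bar Q_1(f_1)+\bar Q_2(f_2)\,|\,(f_1,f_2)\in\Gamma_\group(c)\}$. The ``$Q_1\otimes Q_2$-a.s.'' and ``$\bar Q_1\otimes\bar Q_2$-a.s.'' versions of $\Gamma_\group(c)$ coincide, because the event $\{f_1+f_2\le c\}$ is $\Sigma_1\otimes\Sigma_2$-measurable (both $c$ and $f_1+f_2$ are) and the two product measures agree on $\Sigma_1\otimes\Sigma_2$. Combined with $\bar Q_i(f_i)=Q_i(f_i)$ for invariant $f_i$, the right-hand side rewrites as $\sup\{Q_1(f_1)+Q_2(f_2)\,|\,(f_1,f_2)\in\Gamma_\group(c)\}$, which is the claim. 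The main obstacle I anticipate is the lifting step: one must justify existence and measurability of the disintegrations $Q_i(\cdot\,|\,\Sigma_i)$ and verify that the integrated formula produces a genuine Borel probability measure on $\XtimesY$; this is standard under the Polish assumption but deserves care. A possible shortcut for $V(\bar Q_1,\bar Q_2)\le V(Q_1,Q_2)$ would be joint convexity and weak lower semicontinuity of the transportation cost in its marginals combined with $V(\phi Q_1,\psi Q_2)=V(Q_1,Q_2)$, avoiding $\tilde Q_n$; but the lifting argument still seems unavoidable for the reverse direction.
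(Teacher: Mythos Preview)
Your overall plan is the paper's: produce invariant surrogates $\bar Q_i\in\orb_{\TV}(Q_i)$, transfer both the infimum and the supremum from $(Q_1,Q_2)$ to $(\bar Q_1,\bar Q_2)$, and invoke \cref{result:kantorovich}. The observation that $Q_i$ and $\bar Q_i$ agree on $\Sigma_i$, and hence pair identically against invariant $f_i$, is exactly what handles the supremum side.

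The divergence---and the gap---is in how you transfer the infimum. The paper does not use F{\o}lner averages or disintegrations here; it simply applies \cref{result:TV:orbitopes}(ii), i.e.\ Thorisson's orbit-coupling theorem. Since $Q_i$ and $\bar Q_i$ agree on $\Sigma_i$, Thorisson furnishes random group elements $\Phi_i$ with $\Phi_i\xi_i\sim\bar Q_i$ for $\xi_i\sim Q_i$, and symmetrically $\Psi_i$ with $\Psi_i\zeta_i\sim Q_i$ for $\zeta_i\sim\bar Q_i$. Given any $Q\in\Lambda(Q_1,Q_2)$ with $(\xi_1,\xi_2)\sim Q$, the law of $(\Phi_1\xi_1,\Phi_2\xi_2)$ lies in $\Lambda(\bar Q_1,\bar Q_2)$, and pointwise separate invariance gives $c(\Phi_1\xi_1,\Phi_2\xi_2)=c(\xi_1,\xi_2)$ almost surely, so the cost is preserved exactly. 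The reverse direction is identical. This makes $V(Q_1,Q_2)=V(\bar Q_1,\bar Q_2)$ a two-line symmetric argument.

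Your lifting step instead asserts that separate invariance makes $c$ measurable with respect to $\Sigma_1\otimes\Sigma_2$, and uses this to conclude $Q(c)=\tilde Q(c)$. That measurability does not follow: separate invariance only says $c$ is measurable for the $(\group\times\group)$-invariant $\sigma$-algebra on $\Omega_1\times\Omega_2$, which can strictly contain $\Sigma_1\otimes\Sigma_2$ when the orbit equivalence relations are not smooth. Equivalently, the regular conditional probabilities $Q_i(\,\cdot\mid\Sigma_i)(\eta_i)$ need not be concentrated on the orbit $\group\eta_i$ (for an ergodic $Q_i$ they collapse to $Q_i$ itself, for instance), so there is no reason the inner integral recovers $c(\eta_1,\eta_2)$. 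The Thorisson route sidesteps all of this because it only ever uses the pointwise identity $c(\phi\omega_1,\psi\omega_2)=c(\omega_1,\omega_2)$, applied at random $\phi,\psi$. Your F{\o}lner-average argument for the inequality $V(\bar Q_1,\bar Q_2)\le V(Q_1,Q_2)$ is correct, but once you have the orbit coupling it is no longer needed.
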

\begin{proof}
  See \cref{proofs:sec:mk}.
\end{proof}

\begin{example}
  \label{example:transport}
  (i) Let $P$ be the law of a coupling ${(\xi_1,\xi_2)}$, where $\xi_1$ and $\xi_2$ are real-valued
  stochastic processes indexed by $\mathbb{Z}$, i.e.\ random elements of
  ${\Omega_1=\Omega_2=\mathbb{R}^\mathbb{Z}}$. Let ${\group=\mathbb{Z}}$ act on the index set by addition.
  The coupling $P$ is then in $\Lambda_\group$ if it is jointly stationary,
  \begin{equation}
    \label{eq:mcgoff:nobel}
    (\xi_1(s),\xi_2(s))\;\equdist\;(\xi_1(s+t),\xi_2(s+t))\qquad\text{ for all }s,t\in\mathbb{Z}\;.
  \end{equation}
  A cost that satisfies \eqref{eq:risk:kantorovich} can be defined, for example, as
  \begin{equation}
    \label{eq:cost:mcgoff:nobel}
    c(\xi_1,\xi_2)=h(\xi_1(1),\xi_2(1))
    \qquad\text{ for some lsc }\quad
    h:\mathbb{R}\times\mathbb{R}\rightarrow[0,\infty)\;.
  \end{equation}
  Since the coordinate functions on $\mathbb{R}^\mathbb{Z}$ are continuous, $c$ is lsc on $\Omega_1\times\Omega_2$.
  \\[.5em]
  (ii) To see that couplings of invariant marginals need not be invariant, choose stationary processes $\xi_1$ and $\xi_2$ in above, independently with i.i.d.\ entries.
  Couple them by setting ${\xi_2(0):=\xi_1(0)}$. That does not change the marginal distributions, and both marginals are stationary,
  but $(\xi_1,\xi_2)$ is not.
  \\[.5em]
  (iii) McGoff and Nobel \citep{McGoff:Nobel:2020} study optimality of coupled dynamical systems, indexed by $\mathbb{N}$ rather than $\mathbb{Z}$, that
  can be represented in terms of our definitions as follows:
  Choose $\xi_1$ and $\xi_2$ as in (i), and let $P^+$ be the law of the restriction ${(\zeta_1,\zeta_2):=(\xi_1(s),\xi_2(s))_{s>0})}$ to positive indices.
  If $(\xi_1,\xi_2)$ satisfies \eqref{eq:mcgoff:nobel}, then $P^+$ is also stationary, in the sense that \eqref{eq:mcgoff:nobel} holds for ${t\geq 0}$.
  It is well known that, given stationarity, $P$ and $P^+$ determine each other uniquely \citep{Kallenberg:2005}.
  Phrased in this terminology, McGoff and Nobel \citep{McGoff:Nobel:2020} 
  optimize the risk
  \begin{equation*}
    R(\zeta_1,\zeta_2):=\mean[h(\zeta_1(0),\zeta_2(0))]
    \qquad\text{ for some measurable }\quad
    h:\mathbb{R}\times\mathbb{R}\rightarrow[0,\infty)
  \end{equation*}
  over all stationary laws $P^+$. If $h$ is lsc, this matches \eqref{eq:cost:mcgoff:nobel}, and we observe that 
  \begin{equation*}
    P^+(c)=P(c)
    \quad\text{ and }\quad
    \inf\braces{P^+(c)\,|\,P^+\text{ stationary}}\;=\;\inf\braces{P(c)\,|\,P\in\Lambda_\group}\;.
  \end{equation*}
  \cref{result:kantorovich} shows that $R$ is optimized by a jointly stationary coupling $P^+$, which
  is the restriction to positive indices of an extreme point $P$ of $\Lambda_\group$.
\end{example}

\section{Cocycles}
\label{sec:cocycles}

In machine learning problems, the summation trick is often used to obtain equivariant (rather than invariant) functions.
Using a simple device from algebra, called a cocycle, we can transform equivariance and other symmetry properties
into invariance under a surrogate action, which then makes our other results applicable.

\subsection{Surrogate actions}

Let $\group$ and $\mathbb{H}$ be groups, and $\mathcal{S}$ a set. A
map ${\theta:\mathbb{G}\times\mathcal{S}\rightarrow\mathbb{H}}$ that satisfies
\begin{equation*}
  \label{eq:cocycle}
  \theta(\psi\phi,s)\,=\,\theta(\psi,\phi s)\circ\theta(\phi,s)\quad\text{and}\quad
  \theta(\text{identity element of }\group,s)=\,s\quad\text{ for }s\in\mathcal{S}
\end{equation*}
is called a \kword{cocycle} \citep{Zimmer:1984}. We use this definition as follows:
Let $\mathcal{T}$ be another set, and $\mathcal{F}$ the set of all functions ${x:\mathcal{S}\rightarrow\mathcal{T}}$.
We let ${\group}$ act on $\mathcal{S}$ and $\mathbb{H}$ on $\mathcal{T}$, and consider properties of functions $x$
that can be formulated as
\begin{equation}
  \label{eq:cocycle:equation}
    \theta(\phi,s)\circ x(s)\;=\;x\circ\phi^{-1}(s)\qquad\text{ for all }\phi\in\group\text{ and }s\in\mathcal{S}\;.
\end{equation}
We rewrite \eqref{eq:cocycle:equation} as invariance under a surrogate action, by defining a map
\begin{equation}
  \Theta:\group\times\mathcal{F}\rightarrow\mathcal{F}
  \qquad\text{ as }\qquad
  \Theta(\phi,x)(s)\;:=\;\theta(\phi,s)\circ x\circ\phi(s)\;.
\end{equation}
If and only if $\theta$ is a cocycle, $\Theta$ is a valid action of $\group$ on the function set $\mathcal{F}$ \citep{Zimmer:1984}.
Clearly, $x$ satisfies \eqref{eq:cocycle:equation} if and only if it is invariant under the action $\Theta$.
The orbitope and \Folner average of $x$ under this action are
\begin{equation*}
  \orb^{\theta}(x)\;:=\;\cch\braces{\Theta(\phi,x)\,|\,\phi\in\group}
  \quad\text{ and }\quad
  \empavg^{\theta}_n(x)\;=\;\mfrac{1}{|\A_n|}\mint_{\A_n}\Theta(\phi,x)|d\phi|\;.
\end{equation*}
We can therefore apply \cref{theorem:day} to find functions that satisfy \eqref{eq:cocycle:equation} as follows.
\begin{corollary}
  \label{result:day:cocycle}
  Let a nice amenable group $\group$ act on a locally convex space $X$ whose elements are functions ${\mathcal{S}\rightarrow\mathcal{T}}$.
  Let $\theta$ be a cocycle such that ${\Theta(\phi,\argdot)}$ is linear and continuous for each ${\phi\in\group}$.
  If $\orb^{\theta}(x)$ is compact, $\empavg^{\theta}_n(x)$ exists for each $n$,
  and the sequence ${(\empavg_{n}^{\theta}(x))_n}$ has a convergent subsequence whose
  limit ${\bar{x}}$ is in ${\orb^{\theta}(x)}$ and satisfies \eqref{eq:cocycle:equation}.
\end{corollary}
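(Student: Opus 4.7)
The plan is to apply \cref{theorem:day}(ii) directly to the surrogate action $\Theta$ on the locally convex space $X$. The guiding observation, already implicit in the construction, is that a function $x$ satisfies the cocycle equation \eqref{eq:cocycle:equation} if and only if $\Theta(\phi,x)=x$ for every $\phi\in\group$, i.e.~$x$ is $\group$-invariant under $\Theta$. This reduces the search for functions satisfying \eqref{eq:cocycle:equation} inside $\orb^{\theta}(x)$ to the search for $\Theta$-invariant elements of that orbitope, which is exactly the kind of problem \cref{theorem:day} is designed to handle.

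To invoke \cref{theorem:day}, I would first check that its hypotheses are in force. The cocycle identities for $\theta$ translate, by the direct computation sketched in the paragraph preceding the corollary, into the action axioms $\Theta(\psi\phi,x)=\Theta(\psi,\Theta(\phi,x))$ and $\Theta(\mathrm{identity},x)=x$; each map $\Theta(\phi,\argdot)$ is linear and continuous by assumption; and $\group$ is nice amenable, so a \Folner sequence $(\A_n)$ exists. Second, by construction $\orb^{\theta}(x)$ is the orbitope of $x$ under $\Theta$, and it is compact by hypothesis. \cref{theorem:day}(ii) then delivers, in one step, both the existence of $\empavg^{\theta}_n(x)$ for every $n$ and a subsequence $\empavg^{\theta}_{i(n)}(x)\to\bar{x}$ with $\bar{x}$ a $\Theta$-invariant element of $\orb^{\theta}(x)$. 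The key observation above immediately upgrades this to the claim that $\bar{x}$ satisfies \eqref{eq:cocycle:equation}.

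The main obstacle, if any, is a mild mismatch between the joint continuity of the action posited in \cref{theorem:day} and the separate continuity of each $\Theta(\phi,\argdot)$ assumed here. Inspection of \cref{theorem:day}(ii), however, shows that the joint-continuity hypothesis is really used only in two places: to make the weak integral $\empavg_n^{\theta}(x)$ well-defined, and to ensure that limits of \Folner averages are $\group$-invariant. The first needs nothing beyond weak measurability of $\phi\mapsto\Theta(\phi,x)$ on the compact set $\A_n$, which follows from the continuity of each $\Theta(\phi,\argdot)$ together with standard measurability of the cocycle $\theta$; the second uses only compactness of $\orb^{\theta}(x)$ together with the \Folner property of $(\A_n)$ applied to an arbitrary fixed $\phi$. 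Hence the argument goes through without any further hypothesis, and the corollary follows.
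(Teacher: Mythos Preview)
Your approach is exactly the paper's: the corollary is stated there without a separate proof, as an immediate application of \cref{theorem:day}(ii) to the surrogate action $\Theta$, using the observation (made just before the corollary) that $\Theta$-invariance is equivalent to \eqref{eq:cocycle:equation}. Your write-up is in fact more careful than the paper, which does not comment on the gap between the joint continuity assumed in \cref{theorem:day} and the separate continuity of each $\Theta(\phi,\argdot)$ hypothesised here; one small caveat is that your claimed fix---deriving measurability of $\phi\mapsto\Theta(\phi,x)$ from continuity of $\Theta(\phi,\argdot)$ in the second argument---does not follow as stated, and would instead require an explicit measurability assumption on $\theta$ and the underlying action on $\mathcal{S}$.
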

A way to ensure linearity of $\Theta$ is as follows:
A cocycle is \kword{simple} if ${\theta(\phi,s)=\theta(\phi)}$ for all ${\phi\in\group}$.
If $\mathcal{T}$ is a vector space, $\mathbb{H}$ acts linearly on $\mathcal{T}$, and $\theta$
is simple, then $\Theta$ is always linear.
\begin{example}
  \label{example:cocycles}
  (i) If $\group=\mathbb{H}$ and $\theta$ is the identity, \eqref{eq:cocycle:equation} is $\group$-invariance $x$.
  \\[.2em]
  (ii) For ${\theta(\phi,s)=\theta}$, we obtain equivariance. Suppose $\group$ is amenable, and the action on $\mathcal{T}$ is linear.
  If the closed convex hull $\orb^{\theta}(x)$ of the functions ${\phi^{-1}\circ x\circ\phi}$ is compact, it contains
  an equivariant function.
  \\[.2em]
  (iii)
  Let $x$ be a function with multiple arguments, $\group$ the group of permutations of these arguments, 
  and $\mathbb{H}$ the multiplicative group $\braces{-1,1}$. Set ${\theta(\phi,s)=\text{sign}(\phi)}$.
  Then $x$ is skew-symmetric iff it satisfies \eqref{eq:cocycle:equation}.
  \\[.2em]
  (iv) As an example of a cocycle that is not simple, let
  $\mathcal{S}$ is a $\sigma$-algebra and ${\mathcal{T}=\mathbb{R}}$. All probability measure on $\mathcal{S}$ are then elements
  of $\mathcal{F}$. A probability measure $P$ is \kword{quasi-invariant} under $\group$ if ${\phi_*P\ll P}$ for all ${\phi\in\group}$,
  or in other words, if the image measure $\phi_*P$ has a density under $P$. This density, regarded as a function 
  ${\theta(\phi,s):=d(\phi^{-1}_*P)/dP(s)}$ of ${\phi\in\group}$ and ${s\in\mathcal{S}}$, is a cocycle \citep{Zimmer:1984}.
\end{example}
We can now apply all results derived for invariance to the surrogate action $\Theta$. For example, the
mean ergodic theorem (\cref{fact:mean:ergodic:theorem}) becomes:
\begin{corollary}
  \label{cocycle:mean:ergodic:theorem}
  Let $\xspace$ be a Hilbert space of functions from a measurable space $\mathcal{S}$ into a vector space $\mathcal{T}$.
  Let a nice amenable group $\group$ act measurably on $\mathcal{S}$, and let a group $\mathbb{H}$ act linearly on $\mathcal{T}$.
  If ${\cc:\group\rightarrow\mathbb{H}}$ is a simple cocycle that satisfies
  \begin{equation*}
    \label{cocycle:sp:condition}
    \sp{x\circ\phi^{-1},y\circ\phi^{-1}}\;=\;\sp{\cc(\phi)\circ x,\cc(\phi)\circ y}\qquad\text{ for all }x,y\in\xspace\text{ and }\phi\in\group\;,
  \end{equation*}
  then ${\xspace_{\cc}=\braces{x\in\xspace\,|\,x\text{ satisfies \eqref{eq:cocycle:equation}}}}$ is a closed linear subspace of $\xspace$, and
  \begin{equation*}
    \|\,\empavg^{\theta}_n(z)\;-\;\bar{x}\,\|\;\xrightarrow{n\rightarrow\infty}\;0
    \qquad\text{ holds for each }x\in\xspace\text{ and all }z\in\orb^{\theta}(x)\;,
  \end{equation*}
  where $\bar{x}$ is the orthogonal projection of $x$ onto $\xspace_{\cc}$.
  In particular, each orbitope $\orb^\theta(x)$ contains an element that satisfies \eqref{eq:cocycle:equation}.
\end{corollary}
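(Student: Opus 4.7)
The plan is to recognize the cocycle equation \eqref{eq:cocycle:equation} as invariance under the surrogate action $\Theta(\phi,\argdot):x\mapsto\cc(\phi)\circ x\circ\phi$, verify that $\Theta$ is unitary on $\xspace$, and then read off both the subspace claim and the convergence from \cref{result:orbitope:unitary:action} and \cref{result:contraction}(i). Linearity of each $\Theta(\phi,\argdot)$ is immediate since $\cc$ is simple and $\mathbb{H}$ acts linearly on $\mathcal{T}$, and that $\Theta$ is a group action follows from the cocycle identity, as noted in \cref{sec:cocycles}. Unitarity is obtained by substituting $x'=x\circ\phi$ and $y'=y\circ\phi$ into the hypothesis:
\begin{equation*}
\sp{x,y}\;=\;\sp{x'\circ\phi^{-1},y'\circ\phi^{-1}}\;=\;\sp{\cc(\phi)\circ x',\cc(\phi)\circ y'}\;=\;\sp{\Theta(\phi,x),\Theta(\phi,y)}.
\end{equation*}
Since $\xspace_{\cc}$ is by construction the fixed-point set of the unitary action $\Theta$, it is a closed linear subspace of $\xspace$.

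Next I would apply \cref{result:orbitope:unitary:action} to $\Theta$: each orbitope $\orb^{\cc}(x)$ is weakly compact and contains a unique invariant element, namely the orthogonal projection $\bar{x}$ of $x$ onto $\xspace_{\cc}$. \cref{result:contraction}(i) then yields $\|\empavg^{\cc}_n(\orb^{\cc}(x))\|\to 0$. Since $\bar{x}\in\orb^{\cc}(x)$ and $\empavg^{\cc}_n(\bar{x})=\bar{x}$ by invariance, every $z\in\orb^{\cc}(x)$ satisfies
\begin{equation*}
\|\empavg^{\cc}_n(z)-\bar{x}\|\;=\;\|\empavg^{\cc}_n(z-\bar{x})\|\;\leq\;\|\empavg^{\cc}_n(\orb^{\cc}(x))\|\;\xrightarrow{n\to\infty}\;0,
\end{equation*}
which is the displayed convergence; that $\bar{x}$ itself satisfies \eqref{eq:cocycle:equation} is just its $\Theta$-invariance, and it lies in $\orb^{\cc}(x)$ by construction.

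The only nontrivial step is the unitarity verification. The substitution trick above disposes of it in one line, but tacitly uses that $\xspace$ is closed under the operations $x\mapsto x\circ\phi$ and $x\mapsto\cc(\phi)\circ x$; this closure is implicit in the assumption that the inner-product identity holds for all ${x,y\in\xspace}$, since otherwise the right-hand side of that identity is ill-defined, so no separate check is needed.
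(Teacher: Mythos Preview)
Your proposal is correct and follows essentially the same route as the paper: the corollary is stated there as an immediate application of the mean ergodic theorem (\cref{fact:mean:ergodic:theorem}) to the surrogate action $\Theta$, and the paper's own proof of that theorem proceeds exactly as you do, via \cref{result:orbitope:unitary:action} and \cref{result:contraction}. Your one-line verification that $\Theta$ is unitary is the only piece the paper leaves implicit, and your substitution handles it correctly.
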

We can hence regard $\empavg_n^{\theta}$ as an approximate projector onto $\xspace_{\cc}$ that becomes
exact asymptotically. If $\group$ is compact, we can choose ${\A_n=\group}$ for all $n$, and the
projector is exact. If $\group$ is compact and ${\theta(\phi,s)=\text{identity}}$, so that ${X_\theta=X_\group}$,
this projector is also known as a Reynolds operator \citep{Sturmfels:2008}.

\subsection{Illustration: Equivariant conditional probabilities}
\label{sec:pkernels}

As a more detailed example of the use of surrogate actions, we consider the existence of equivariant conditional distributions.
Suppose $\group$ acts measurably on $\Omega$, let $P$ be a probability measure on $\Omega$,
and let $\eta$ be a probability kernel, i.e.\ a measurable map
${\eta:\Omega\rightarrow\P(\Omega)}$.
For each ${t\in\Omega}$,
the value ${\eta(t)}$ is a probability measure, and we write ${\eta(A,t):=\eta(t)(A)}$ for a Borel set $A$.
The kernel is $P$-almost $\group$-equivariant if
\begin{equation}
  \label{eq:action:on:pkernels}
  \eta(\phi^{-1}\argdot,t)\;=\;\eta(\argdot,\phi t)
  \qquad\text{ for all }\phi\in\group\text{ and }P\text{-almost all }t\;,
\end{equation}
or if ${\phi_*\eta=\eta\circ\phi}$ in short. If $\xi$ and $\zeta$ are two random elements of $\Omega$,
where ${\zeta\sim P}$ and $\eta(\argdot,t)$ is the conditional distribution of $\xi$ given ${\zeta=t}$,
almost equivariance of $\eta$ means
\begin{equation*}
  \mathbb{P}(\xi\in\argdot|\zeta=\phi t)\;=\;\mathbb{P}(\phi\xi\in\argdot|\zeta=t)\qquad\text{ for each }\phi\in\group\text{ and almost all }t\in\Omega\;.
\end{equation*}
Given a kernel $\eta$, an equivariant $\bar{\eta}$ can be constructed as follows:
\begin{enumerate}
\item Choose the cocycle ${\theta(\phi,\eta):=\phi^{-1}}$, which defines
${\Theta(\phi,\eta)=\phi^{-1}_*\eta\circ\phi}$. A kernel is $\group$-equivariant if and only if it is invariant under $\Theta$.

\item Equip the set of probability kernels with its natural topology---this is the ``weak topology''
  that arises, for example, in the context of stable convergence and central limit theorems for
  dependent variables \citep{Haeusler:Luschgy}. We show
that this makes the orbitope of $\eta$ under the action $\Theta$ compact.

\item Verify $\Theta$ is linear and continuous. By \cref{result:day:cocycle},
the orbitope contains a probability kernel $\bar{\eta}$
that satisfies ${\Theta(\phi,\bar{\eta})=\bar{\eta}}$ for all $\phi$, and is hence $\group$-equivariant.
\end{enumerate}
Filling in the technical details (see the proof in \cref{proofs:sec:cocycles}) shows the following:
\begin{proposition}
  \label{result:equivariant:kernel}
  Let $\group$ be a amenable, and let $P$ be a $\group$-invariant probability measures on $\Omega$. If $\eta$ is a probability kernel
  whose marginal distribution ${\int\eta(\argdot,t)P(dt)}$ on $\Omega$ is $\group$-tight, there
  exists a $\group$-equivariant kernel $\bar{\eta}$ that satisfies
  \begin{equation*}
    \mint h(s,t)\bar{\eta}(ds,t)P(dt)
    \;\leq\;
    \sup\nolimits_{\phi\in\group}\,\mint h(\phi s,\phi t)\eta(ds,t)P(dt)
  \end{equation*}
  whenever ${h:\Omega^2\rightarrow\mathbb{R}}$ is measurable and ${s\mapsto h(s,t)}$ is lsc
  for all ${t\in\Omega}$.
\end{proposition}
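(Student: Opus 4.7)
The plan is to follow the three-step strategy sketched immediately above the statement. Under the cocycle ${\theta(\phi,\eta):=\phi^{-1}}$, the surrogate action unfolds to ${\Theta(\phi,\eta)(A,t)=\eta(\phi A,\phi t)}$, and a probability kernel is $\group$-equivariant in the sense of \eqref{eq:action:on:pkernels} precisely when it is $\Theta$-invariant. First I would identify each kernel $\eta$ with the joint measure ${\nu_\eta(A\times B):=\int_B\eta(A,t)P(dt)}$ on $\Omega^2$; disintegration makes this a bijection onto the set of measures in $\P(\Omega^2)$ whose second marginal equals $P$. A short computation using $P$-invariance gives ${\nu_{\Theta(\phi,\eta)}=(\phi^{-1}\otimes\phi^{-1})_*\nu_\eta}$, so the $\Theta$-orbit of $\eta$ corresponds to the orbit of $\nu_\eta$ under the diagonal action of $\group$ on $\P(\Omega^2)$, and the ``weak topology'' on kernels pulls back to convergence in distribution on the joint measures.

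With this identification, compactness of the orbitope reduces to a tightness check: the diagonal action on $\Omega^2$ is continuous, so by \cref{lemma:G:tight} it suffices that $\nu_\eta$ be $\group$-tight. Its two marginals are $\int\eta(\argdot,t)P(dt)$, which is $\group$-tight by hypothesis, and $P$, which is trivially $\group$-tight because it is invariant; forming products of the compact witnesses for the two marginals yields $\group$-tightness of $\nu_\eta$ for the diagonal action. Linearity and continuity of $\Theta(\phi,\argdot)$ are inherited from those of $(\phi^{-1}\otimes\phi^{-1})_*$ on $\P(\Omega^2)$. With these pieces in place, \cref{result:day:cocycle} produces a $\Theta$-invariant element $\bar\eta$ in the orbitope, i.e.\ a $\group$-equivariant kernel.

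For the inequality, I would apply \eqref{eq:day} of \cref{theorem:day} to the linear functional ${f(\eta'):=\int h(s,t)\eta'(ds,t)P(dt)}$ on the orbitope. Disintegration followed by the change of variables ${t\mapsto\phi^{-1}t}$ (again using $P$-invariance) gives ${f(\Theta(\phi,\eta))=\int h(\phi^{-1}s,\phi^{-1}t)\eta(ds,t)P(dt)}$, and reparametrizing ${\phi\leftrightarrow\phi^{-1}}$ in the supremum turns the right-hand side of \eqref{eq:day} into the claimed bound. The main obstacle I expect is verifying lower semicontinuity of $f$ on the orbitope from only pointwise-in-$t$ lower semicontinuity of ${s\mapsto h(s,t)}$: my plan is to truncate to ${h_N:=(h\wedge N)\vee(-N)}$, write ${\int h_N\,d\nu_\eta=\int\bigl[\int h_N(s,t)\eta(ds,t)\bigr]P(dt)}$, invoke the portmanteau inequality for bounded lsc test functions on the inner integral (this is the feature of the stable-convergence topology referenced through \citep{Haeusler:Luschgy}), and then pass ${N\to\infty}$ by monotone convergence to recover the bound for the original $h$.
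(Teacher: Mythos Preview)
Your proof is correct and follows the same three-step plan as the paper, but the compactness argument differs. The paper works directly in the space $\mathcal{K}_P$ of probability kernels and invokes \cref{fact:pkernels}(i): a set of kernels is relatively compact there iff its first marginals $p(\eta)=\int\eta(\cdot,t)P(dt)$ form a relatively compact subset of $\P(\Omega)$; since $p(\Theta(\phi,\eta))=\phi_*^{-1}p(\eta)$ and $p(\eta)$ is $\group$-tight by hypothesis, \cref{lemma:G:tight} finishes the job without ever touching the second marginal. Your detour through $\P(\Omega^2)$—using $\group$-tightness of \emph{both} marginals to obtain $\group$-tightness of the joint law under the diagonal action—is a valid and more hands-on alternative, but it relies on the identification of the kernel topology with weak convergence of the joint laws on the slice with fixed second marginal $P$; that identification is true but is itself a nontrivial result from \citet{Haeusler:Luschgy}, so neither route is fully self-contained. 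For the inequality, both proofs ultimately rest on the same lsc statement, which the paper packages as \cref{fact:pkernels}(ii) and simply cites. Your added truncation is superfluous once that fact is invoked, and two small caveats apply to it: stable convergence does not mean $\eta_n(\cdot,t)\to\eta(\cdot,t)$ for individual $t$, so ``portmanteau on the inner integral'' is only a heuristic (you correctly defer to the reference); and the two-sided truncation $(h\wedge N)\vee(-N)$ is not monotone in $N$, so the closing ``monotone convergence'' would need to be replaced by a dominated-convergence argument.
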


\section{Related work}
\label{sec:related}

The summation formula \eqref{summation:trick} has been used for a long time, 
see e.g.~\citet{Minsky:Papert:1969}. The term summation trick is used by \citet{Diaconis:1988}.
Early applications to equivariant neural networks are due to \citet{Shawe-Taylor:1989} and \citet{Wood:Shawe-Taylor:1996}.
More recent examples include \citep{Cohen:Welling:2016,Kondor:Trivedi:2018}.
\\[.2em]
\emph{Orbitopes}.
The study of convex hulls of orbits of compact matrix groups in $\mathbb{R}^d$ has, at least in certain
special cases, a long history, see for example \citet{Atiyah:1982}. More recent work includes
\citep{Longinetti:Sgheri:Sottile,Farran:Robertson:1994,Sanyal:Sottile:Sturmfels:2011}.
The term \emph{orbitope} is introduced by \citet*{Sanyal:Sottile:Sturmfels:2011}, who
study the convex geometry of such objects in $\mathbb{R}^n$.
\\[.2em]
\emph{Amenability} was introduced by von Neumann to generalize certain properties of compact
groups, see e.g.~\citep{Grigorchuk:Harpe:2017,Bekka:delaHarpe:Valette:2008}.
The Hunt-Stein theorem is among the earliest applications, and led to a period of
intense interest in amenability in statistics, in particular in the context of Wald's decision theory
\citep{Bondar:Milnes:1981:1,LeCam:1986,Torgersen:1991,Eaton:George:2021}.
This literature uses von Neumann's definition, which is as follows:
Given a nice group $\group$, define $\L_\infty(\group)$ with respect to Haar measure. A \kword{mean}
is a linear functional ${\chi:\L_\infty(\group)\rightarrow\mathbb{R}}$ that
satisfies ${\chi(1)=1}$ and ${\chi(x)\geq 0}$ for all non-negative ${x\in\L_\infty(\group)}$.
Call $\group$ amenable if has an invariant mean, i.e.~one that satisfies
${\chi(x\circ\phi)=\chi(x)}$ for all ${\phi\in\group}$.
\Folner showed that a nice group is amenable if and only if it contains a \Folner sequence
\citep{Grigorchuk:Harpe:2017}. We follow common practice in modern ergodic theory
and use this characterization as a definition \citep[e.g.][]{Einsiedler:Ward:2011,Weiss:2003:1,Kerr:Li:2016}.
It has not been adopted in statistics, possibly because the Hunt-Stein theorem predates F{\o}lner's work.
\\[.2em]
\emph{Day's theorem} (\cref{fact:day}) seems to be known mostly to specialists, but an equivalent fact
is more widely appreciated, namely that every continuous action of an amenable group on a compact
set has an invariant probability measure \citep[e.g.][]{Einsiedler:Ward:2011,Bekka:delaHarpe:Valette:2008}.
(Combining this with Choquet's theorem and the fact that invariant measures have invariant barycenters
is one way to prove \cref{fact:day}.) Le Cam \citep{LeCam:1986} uses Day's theorem---though without
attribution to Day---to prove the Hunt-Stein theorem.
The proof constructs a convex $\group$-invariant risk with compact sublevel sets, 
and applies Day's theorem to these sublevel sets. This is precisely the idea described in the introduction,
minus the use of \Folner sequences.
(In our terminology, each of Le Cam's sublevel sets contains the orbitope of the minimax solution $\hat{w}$ in
the Hunt-Stein theorem.)
Another variant of the general idea appears in variational
analysis:
\begin{fact}[Symmetric criticality principle, \citet{Palais:1979}]
  Let $\group$ be a group of isometries of a Riemannian manifold $M$ and let ${f:M\to\mathbb{R}}$ be
  a $\mathbf{C}^1$ function invariant under $\group$. Then the set $M_\group$ of $\group$-invariant elements of $M$
  is a totally geodesic smooth submanifold of $M$, and if ${\bar{x}\in M_\group}$ is a critical point (a point at which the
  differential vanishes) of the restricted function
  $f|M_\group$, then $\bar{x}$ is in fact a critical point of $f$.
\end{fact}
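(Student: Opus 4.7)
The plan is to reduce everything to linear algebra in the tangent space $T_{\bar x} M$ by exploiting the fact that each isometry $\phi \in \group$ fixing a point $\bar x$ is, near $\bar x$, conjugated by the exponential map to its differential $d\phi_{\bar x}$. The whole argument has three steps: identify the local model of $M_\group$ via $\exp_{\bar x}$, deduce that $M_\group$ is smooth and totally geodesic, and then observe that the gradient of an invariant function must lie in the invariant tangent subspace.

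First I would establish the equivariance ${\phi\circ\exp_{\bar x}=\exp_{\bar x}\circ\, d\phi_{\bar x}}$ on a neighborhood $U$ of $0\in T_{\bar x}M$, which holds because both sides are isometries that send $0$ to $\bar x$ with the same derivative at $0$, and isometries are determined by their first-order data at a fixed point. This immediately identifies the fixed set of $\group$ near $\bar x$ with $\exp_{\bar x}(U\cap(T_{\bar x}M)^\group)$, where ${(T_{\bar x}M)^\group:=\{v\in T_{\bar x}M\,|\,d\phi_{\bar x}(v)=v\text{ for all }\phi\in\group\}}$. Because the right-hand side is a linear subspace of $T_{\bar x}M$, this furnishes $M_\group$ with a smooth chart near $\bar x$, and since $\bar x$ was arbitrary, $M_\group$ is a smooth submanifold with ${T_{\bar x} M_\group=(T_{\bar x}M)^\group}$. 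For the totally geodesic property, let $\gamma$ be the geodesic with $\gamma(0)=\bar x$, $\dot\gamma(0)=v\in T_{\bar x}M_\group$. For each $\phi\in\group$, the curve $\phi\circ\gamma$ is another geodesic with the same initial data, so uniqueness forces $\phi\circ\gamma=\gamma$, and therefore $\gamma$ stays in $M_\group$.

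The criticality claim is then pure linear algebra in $T_{\bar x}M$. I would convert $df(\bar x)$ to the gradient $\nabla f(\bar x)\in T_{\bar x}M$ using the Riemannian metric. Invariance of $f$, together with invariance of the metric (because $\group$ acts by isometries), forces $\nabla f(\bar x)$ to be fixed by every $d\phi_{\bar x}$, so $\nabla f(\bar x)\in(T_{\bar x}M)^\group=T_{\bar x}M_\group$. The gradient of the restricted function $f|M_\group$ at $\bar x$ is the orthogonal projection of $\nabla f(\bar x)$ onto $T_{\bar x}M_\group$; but $\nabla f(\bar x)$ already lies in $T_{\bar x}M_\group$, so the projection is $\nabla f(\bar x)$ itself. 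The hypothesis that $\bar x$ is a critical point of $f|M_\group$ therefore gives $\nabla f(\bar x)=0$, hence $df(\bar x)=0$.

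The main obstacle is the local identification of $M_\group$ via the exponential map, i.e.\ showing that fixed points of $\group$ near $\bar x$ are \emph{exactly} $\exp_{\bar x}$-images of jointly invariant tangent vectors rather than merely containing them. This is what the equivariance of $\exp$ buys us, and it is the only step that uses smoothness of the manifold and the isometric nature of the action in a substantive way. Everything else---totally geodesic, the tangent space formula, and the criticality deduction---follows from elementary linear algebra and uniqueness of geodesics.
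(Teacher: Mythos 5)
Your argument is correct, and it is essentially Palais' own proof of the Riemannian case of the symmetric criticality principle; the paper itself states this only as a cited Fact from \citet{Palais:1979} and gives no proof, so there is nothing in the text to compare against. The three steps---local linearization of the fixed-point set via the exponential map, totally geodesic via uniqueness of geodesics, and the observation that $\nabla f(\bar{x})$ is fixed by every $d\phi_{\bar{x}}$ (because both $f$ and the metric are invariant) and hence already lies in $T_{\bar{x}}M_\group$---are exactly the standard route, and each is carried out correctly.

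One small point of hygiene: your justification of the equivariance ${\phi\circ\exp_{\bar{x}}=\exp_{\bar{x}}\circ\,d\phi_{\bar{x}}}$ by saying that ``both sides are isometries with the same first-order data'' is misphrased, since neither composite is an isometry ($\exp_{\bar{x}}$ itself is not one). The clean one-line argument is that $t\mapsto\phi(\exp_{\bar{x}}(tv))$ is a geodesic with initial point $\bar{x}$ and initial velocity $d\phi_{\bar{x}}v$, hence equals $t\mapsto\exp_{\bar{x}}(t\,d\phi_{\bar{x}}v)$ by uniqueness of geodesics. With that substitution (and taking the normal neighborhood $U$ to be a metric ball, so that it is preserved by each orthogonal map $d\phi_{\bar{x}}$ and the reverse inclusion of the fixed set follows from injectivity of $\exp_{\bar{x}}$ on $U$), the proof is complete. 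The only remaining caveat is cosmetic: distinct components of $M_\group$ may have different dimensions, so ``smooth submanifold'' should be read componentwise, as it is in Palais' formulation.
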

\noindent This statement is weaker than those we use, as it
does not guarantee existence of $\bar{x}$. Accordingly, compactness or amenability of $\group$
is not required, and convexity is not used since the critical point is not required to be a minimizer.
Various generalizations exist in physics a variational analysis, see e.g.~\citep{Willem:1996}.
\\[.2em]
\emph{Dual actions}. Linear actions, also called representations, are predominantly studied in the case
where $\xspace$ is a Hilbert space or Euclidean. Some work on representations on Banach
spaces exists \citep[e.g.][]{Lyubich:1988}, but overall, much less seems to be known than in the Hilbert case.
Dual actions are also known as dual representations,
or contragredient representations \citep{Hall:2003}. For more on unitary actions, see \citep{Einsiedler:Ward:2011,Kerr:Li:2016}.
\\[.2em]
\emph{\cref{::annihilators} and \cref{result:contraction}}. Both proofs adapt
arguments used to prove the mean ergodic theorem for countable amenable
groups in Hilbert spaces
\citep[e.g.][]{Einsiedler:Ward:2011,Kerr:Li:2016}. See in particular the elegant proof of \citet[][Theorem 2.1]{Weiss:2003:1}.
\\[.2em]
\emph{Invariant mean embeddings} for compact groups are described in \cite{Raj:et:al:2017}.
The fact that $\group$-ergodic probability measures are the extreme points of the set of $\group$-invariant
distributions is of fundamental importance to ergodic theory, and goes back to R.~H. Farrell and V.~S. Varadarajan,
see \citep{Einsiedler:Ward:2011,Kallenberg:2005}.
\\[.2em]
\emph{\cref{result:extremal:couplings}} generalizes results by J. Lindenstrauss \citep{Lindenstrauss:1965}
and by K.~R. Parthasarathy \citep{Parthasarathy:2007}. 
Lindenstrauss proves equivalence of (i)--(iii) for
doubly-stochastic measures without group invariance---in our terminology, for the special case ${\XtimesY=[0,1]^2}$
with uniform marginals and $\group=\braces{\text{identity}}$. Doubly-stochastic measures are
known as \kword{copulas} in statistics, and as
\kword{permutons} in combinatorics. 
Parthasarathy establishes the equivalence (i) $\Leftrightarrow$ (iv), using methods
of quantum stochastic calculus. (Our proof does not require quantum probability.)
For ${\Omega_1=\Omega_2=\braces{1,\ldots,n}}$,
couplings with uniform marginals are doubly-stochastic matrices, and $\Lambda$ is
the Birkhoff polytope. 
Parthasarathy shows that the Birkhoff characterization of double-stochastic
matrices can be deduced from (iv).
\\[.2em]
\emph{\cref{result:kantorovich} (invariant couplings)}. Other results on invariant couplings include those
in \citep{Lim:2020,Ghoussoub:Moameni:2014} on invariant optimal martingale couplings (which have much more
structure than our couplings) for two specific compact groups, those of rotations and cyclic permutations.
\\[.2em]
\emph{Cocycles} are fundamental tool of ergodic theory
\citep[e.g.][]{Avila:Santamaria:Viana:Wilkinson:2013,Bowen:Nevo:2015,Kerr:Li:2016,Zimmer:1984}.
Versions of the definition have appeared in statistics under different names---Torgersen
\citep[][\S 6.5]{Torgersen:1991}, for example, formulates a ``generalized equivariance''
of decisions (a special case of
\eqref{eq:cocycle:equation} where $\theta$ is simple and $x$ a decision function),
and deduces a decision consistency requirement that is a special case of \eqref{eq:cocycle}.
Recent work of Dance and Bloem-Reddy shows that the mechanism by which certain interventions propagate through a
causal model is a cocycle, and specifying only this cocycle can be more robust than specifying the model \citep{Dance:Bloem-Reddy:2024}.
The idea of a pushing an ergodic theorem through a cocycle in \cref{cocycle:mean:ergodic:theorem}
is adapted from a similar (but much more sophisticated) one of Bowen and Nevo \citep{Bowen:Nevo:2015},
who do so for a pointwise ergodic theorem.

\vspace{1em}\noindent{\bf Acknowledgments}. This work was supported by the Gatsby Charitable Foundation.

\bibliography{references}

\newpage
\appendix

\section{Examples of amenable groups and cocycles}
\label{appendix:amenable}

\subsection{Amenable groups}

Many groups used in statistics and machine learning are 
amenable, and this section lists some examples. For more on the mathematical implications of
amenability, see \citep{Bekka:delaHarpe:Valette:2008,Weiss:2003:1,Grigorchuk:Harpe:2017}.
See \citep{Austern:Orbanz:2022} on how the averages $\empavg_n$ can be interpreted
as estimators, and for a range of examples in statistics.
\\[.5em]
1) All finite and compact groups are amenable (since we can choose all sets in the
  \Folner sequence as the group itself). All results throughout
  therefore hold for compact groups (though many become trivial).
  Compact groups relevant to statistics include the \kword{symmetric
    groups} $\mathbb{S}_n$, which consists of all permutations of the
  set $\braces{1,\ldots,n}$, and the \kword{orthogonal group}
  $\mathbb{O}_d$ of rotations of $\mathbb{R}^d$.
\\[.5em]
2) The \kword{finitary symmetric group} $\mathbb{S}=\cup_n\mathbb{S}_n$, which consists of all
finitely supported permutations of $\mathbb{N}$.
The subgroups $\mathbb{S}_n$ form a \Folner sequence.
\\[.5em]
3) The \kword{discrete shifts groups} ${(\mathbb{Z}^d,+)}$. The sets
$\braces{-n,\ldots,n}^d$ 
and $\braces{0,\ldots,n}^d$ both form \Folner sequences.
\\[.5em]
4) The \kword{continuous shift groups} ${(\mathbb{R}^d,+)}$, with \Folner sequences
${[-n,n]^d}$ or ${[0,n]^d}$.
\\[.5em]
5) The \kword{Euclidean groups} of all isometries of $\mathbb{R}^d$, which
can be identified with ${\mathbb{R}^d\times\mathbb{O}_d}$.
The sets ${[0,n]^d\times\mathbb{O}_d}$ form a \Folner
sequence.
\\[.5em]
6) The \kword{scale group} $(\mathbb{R}_{>0},\,\cdot\,)$, with
the sets ${[1,e^n]}$ as approximating sequence. This group is
homomorphic to ${(\mathbb{R},+f)}$ via the group homomorphism
${x\mapsto\exp(x)}$.
\\[.5em]
5) The \kword{finitary special orthogonal group} $\mathbb{SO}_\infty$, which can be
identified with ${\cup_{n\in\mathbb{N}}\,\mathbb{SO}_n}$, and has \Folner sequence ${(\mathbb{SO}_n)_n}$.
Invariance under this group is called rotatability and characterizes certain Gaussian processes
\citep{Kallenberg:2005}.
\\[.5em]
6) A \kword{finitary unitary group} can be defined similarly 
\citep{Bougarde:Najnudel:Nikeghabali:2013}.
\\[.5em]
7) All \kword{abelian}, all \kword{nilpotent}, and all \kword{solvable} groups are
amenable \citep{Grigorchuk:Harpe:2017}.
\\[.5em]
8) The group of non-singular, upper-triangular matrices is amenable (as it is solvable).
\\[.5em]
9) Any multiplicative group of upper triangular matrices whose diagonal
entries are all 1 is amenable (since it is nilpotent).
\\[.5em]
10) A special case is the \kword{Heisenberg group} over an algebraic field $\mathbb{K}$ \citep{Zimmer:1984},
\begin{equation*}
  \braces{M[a,b,c]\,|\,a,b,c\in\mathbb{K}}
  \quad\text{ where }\quad
  M[a,b,c]\;:=\;\left(\begin{smallmatrix} 1 & a & b \\[.1em] 0 & 1 & c \\[.1em] 0 & 0 & 1\end{smallmatrix}\right)\;.
\end{equation*}
The sets ${\A_n:=\braces{M(a,b,c)||a|,|b|,|c|\leq n}}$ form a \Folner sequence
if $\mathbb{K}$ is $\mathbb{Z}$ or $\mathbb{R}$.
\\[.5em]
11) The \kword{crystallographic groups}. A group $\group$ of isometries of $\mathbb{R}^k$
is crystallographic if it tiles $\mathbb{R}^k$ with a convex polytope ${M\subset\mathbb{R}^k}$,
that is, the polytopes ${\phi M}$ for ${\phi\in\group}$ cover $\mathbb{R}^k$ entirely and
only their boundaries overlap. There are 17 such groups for ${k=2}$, 230 for ${k=3}$, and a finite
number for each ${k\in\mathbb{N}}$. Those on $\mathbb{R}^3$ describe the possible geometries of crystals,
and are fundamental to materials science. See \citep{Adams:Orbanz:2023} for applications in machine learning
and for references.
\\[.5em]
12) The \kword{lamplighter group} \citep[e.g.][]{Lindenstrauss:2001}. Each group element is a pair
${\phi=(I_\phi,j_\phi)}$ of a finite set
${I_\phi\subset\mathbb{Z}}$ and a number ${j_\phi\in\mathbb{Z}}$. 
The operation is ${\psi\phi:=(I_\psi\vartriangle I_\phi,\,j_\psi+j_\phi)}$,
with unit element ${e=(\varnothing,0)}$.
The lamplighter metaphor imagines an infinite row of street
lights indexed by $\mathbb{Z}$. Each element $\phi$ specifies
all lamps indexed by $I_\phi$ as
on, and a lamplighter stands at
lamp $j_\phi$. An element $\psi$ acts on $\phi$ by toggling
the state of all lamps in $I_\psi$, and moving the lamplighter by
$j_\psi$ steps.

\subsection{Examples of cocycles}
\label{appendix:cocycles}

We have already seen in \cref{example:cocycles} that equivariance, and skew-symmetry can be represented
as cocycles. We mention a few more case, but the list is far from exhaustive.
\\[.5em]
1) \emph{Quasi-invariance}. A function is quasi-invariant under $\group$ if it is invariant up to positive scaling.
Thus, $x$ is quasi-invariant iff it is $\Theta$-invariant for any map ${\theta(\phi,s)=\theta(\theta)}$
into the multiplicative group ${(\mathbb{R}_{>0},\cdot)}$. 
\\[.5em]
2) \emph{Characters}. Choose $\mathbb{H}$ as the complex circle
${\braces{z\in\mathbb{C}|\,|z|=1}}$ with multiplication as operation.
A continuous function ${\theta:\group\rightarrow\mathbb{H}}$
with ${\theta(e_{\group})=1}$ is a \kword{character} of $\group$. 
This implies \eqref{eq:cocycle}, so the characters
of $\group$ are precisely the continuous simple cocycles into $\mathbb{H}$. For ${\group=(\mathbb{R},+)}$,
every character is of the form ${\phi\mapsto e^{i\psi\phi}}$ for some ${c\in\mathbb{R}}$, which is, of course, the Fourier kernel.
\\[.5em]
3) \emph{The modulus}. On every nice group exists a
unique function
${\theta:\group\rightarrow\mathbb{R}}$, called the \kword{modulus} of $\group$,
that satisfies
\begin{equation*}
  \mint f(\phi\psi^{-1})|d\phi|
  \,=\,
  \theta(\psi)
  \mint f(\phi)|d\phi|
  \qquad\text{ for all }\phi,\psi\in\group
\end{equation*}
for every compactly supported continuous ${f:\group\rightarrow(0,\infty)}$, see \citep{Eaton:1989}.
The definition implies ${\theta(\phi\psi)=\theta(\phi)\theta(\psi)}$, so the modulus is a simple cocycle.
\\[.5em]
4) The \emph{coboundary} ${(\phi x-x)(s)=:\theta(\phi,s)}$ of a function $x$ (\cref{:::annihilators}) is a cocycle.
\\[.5em]  
5) {\em Gradient fields}.
An isometry of $\mathbb{R}^n$ is a map ${\phi(\omega)=A_\phi\omega+b_\phi}$, where $A_\phi$ is an orthogonal matrix and ${b_\phi\in\mathbb{R}^n}$.
Let $\group$ be a group of isometries.
If a differentiable function ${f:\mathbb{R}^n\rightarrow\mathbb{R}}$ is $\group$-invariant, its gradient
vector field ${F(\omega)=(\nabla x)(\omega)}$ satisfies ${A_\phi F=F\circ\phi}$ for all ${\phi\in\group}$.
This is $\Theta$-invariance for the simple cocycle ${\theta(\phi,\omega):=A_{\phi}^{t}}$.
See \citep{Adams:Orbanz:2023} for more on such $\Theta$-invariant fields.

\newpage

\section{Proofs for Section \ref{sec:day}}
\label{proofs:sec:day}

In this section, we prove \cref{lemma:orbitope:invariant} and \cref{theorem:day}. Both proofs use basic
facts from Choquet theory:
\begin{fact}[e.g. {\citep[][1.2 and 1.5]{Phelps:2001}}]
  \label{fact:choquet}
  Let $K$ be a compact set in a locally convex Hausdorff space. A point is in the closed convex hull $\cch K$ if and
  only if it is the barycenter of a Radon probability measure on $K$. If ${\cch K}$ is also compact,
  every Radon probability on $K$ has a barycenter, and all extreme points of $\cch K$ are in $K$.
\end{fact}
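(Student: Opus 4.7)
The fact has three components: the equivalence between $x \in \cch K$ and $x$ being the barycenter of a Radon probability on $K$; existence of such barycenters when $\cch K$ is compact; and Milman's partial converse to Krein--Milman. I would tackle them in the order existence, equivalence, extremality, since existence underpins the other two.

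For existence, given a Radon probability $P$ on the compact $K$, the barycenter $\sp{P}$ is characterized by $\ell(\sp{P}) = P(\ell)$ for every $\ell \in X^*$. I would realize it as the intersection $\bigcap_{\ell \in X^*} H_\ell$ of the closed slices $H_\ell := \{x \in \cch K : \ell(x) = P(\ell)\}$. For any finite subfamily $\ell_1, \dots, \ell_n$, the continuous linear map $T = (\ell_1, \dots, \ell_n): X \to \mathbb{R}^n$ pushes $P$ forward to a Radon probability on the compact set $T(K)$, whose elementary Lebesgue barycenter lies in $\cch T(K) = T(\cch K)$ and so admits a preimage in $\bigcap_i H_{\ell_i}$. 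Compactness of $\cch K$ then promotes this finite intersection property to non-emptiness of the full intersection, and Hausdorffness forces the result to be a single point.

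For the equivalence, one direction is Hahn--Banach separation: if $\sp{P} \notin \cch K$, some $\ell \in X^*$ would strictly separate them, yielding $\ell(\sp{P}) > \sup_K \ell \geq P(\ell)$ and contradicting $\ell(\sp{P}) = P(\ell)$. For the converse, approximate any $x \in \cch K$ by finite convex combinations $x_\alpha = \sum_i c_{\alpha, i}\, y_{\alpha, i}$ with $y_{\alpha, i} \in K$; each $x_\alpha$ is the barycenter of the atomic Radon probability $\mu_\alpha := \sum_i c_{\alpha, i}\, \delta_{y_{\alpha, i}}$ on $K$. Since $K$ is compact, $\P(K)$ is weak* compact in $\C(K)^*$ by Banach--Alaoglu, so a subnet of $(\mu_\alpha)$ converges weak* to some $\mu \in \P(K)$. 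For each $\ell \in X^*$, continuity of $\ell|_K$ gives $\ell(x) = \lim_\beta \mu_{\alpha_\beta}(\ell|_K) = \mu(\ell|_K) = \ell(\sp{\mu})$, so $x = \sp{\mu}$ by Hausdorffness.

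For Milman's assertion, suppose $x \in \ex \cch K$ but $x \notin K$. Using that an LCHS admits a neighborhood base at $0$ of closed convex symmetric sets, I would choose such a $\bar V$ with $(x + \bar V) \cap K = \emptyset$, cover the compact $K$ by finitely many translates $y_1 + V, \dots, y_n + V$ of the open interior $V$ of $\bar V$ with $y_i \in K$, and set $K_i := K \cap (y_i + \bar V)$. Each $\cch K_i$ is contained in the closed convex set $y_i + \bar V$, so $\cch K = \cch\bigcup_i \cch K_i$ expresses $x$ as $\sum_i \lambda_i z_i$ with $z_i \in \cch K_i$; extremality collapses this to $x = z_{i_0} \in y_{i_0} + \bar V$, whence $y_{i_0} \in (x + \bar V) \cap K$ by symmetry of $\bar V$, a contradiction. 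This covering argument is the main obstacle: the Hahn--Banach and Banach--Alaoglu steps in the earlier parts are essentially mechanical, whereas Milman demands careful management of $\bar V$ and the extremality hypothesis to close the contradiction.
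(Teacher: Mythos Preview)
The paper does not prove this statement; it records it as a standard fact with a citation to Phelps \citep[][1.2 and 1.5]{Phelps:2001}. Your argument is correct and is essentially the classical proof found there: the finite-intersection construction of the barycenter, the Hahn--Banach/Banach--Alaoglu equivalence, and the covering proof of Milman's theorem are exactly the standard ingredients, so there is nothing to compare.
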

\begin{proof}[Proof of \cref{lemma:orbitope:invariant}]
  \step The orbit $\group(x)$ is $\group$-invariant. Since each ${\phi\in\group}$ is linear continuous,
  it commutes with convex combinations and limits, so ${\orb(x)=\cch\group(x)}$ is $\group$-invariant.
  A set ${A\subset\xspace}$ is hence an open subset of $\Pi(x)$ iff $\phi A$ is. Since the interior
  is the union of all open subsets, it is $\group$-invariant.
  Consequently, ${\partial\Pi(x)=\Pi(x)\setminus\Pi(x)^\circ}$ is invariant.
  \\[.5em]
  \step
  If $A$ is $\group$-invariant, it is a disjoint union of orbits, so ${x\in A}$ iff ${\group(x)\subset A}$.
  If $A$ is also closed and convex, then ${\group(x)\subset A}$ iff ${\cch\group(x)\subset A}$. This shows (i).
  \\[.5em]
  \step The two statements in (ii) follow respectively from the fact that a Banach norm topology is completely metrizable, which implies that
  closed convex hulls of compact sets are compact \citep[][5.35]{Aliprantis:Border:2006}, and from the Krein-Smulian theorem \citep[][6.35]{Aliprantis:Border:2006}.
  \\[.5em]
  \step If $\orb(x)$ is compact, it is a compact convex set in a locally convex Hausdorff space, and (iii) follows from \cref{fact:choquet}.
  \\[.5em]
  \step For (iv), note that linearity of the action implies ${\group(ax_1+bx_2)=a\group(x_1)+b\group(x_2)}$ for any linear combination of vectors $x_1$ and $x_2$.
  (Here, ${a\group(x_1)}$ is the scaled set $\braces{az|z\in\group(x_1)}$, and $+$ on the right denotes the Minkowski sum.)
  It follows that
  \begin{equation*}
    \ch\group(x)\;=\;\ch\group\bigl(\tsum c_ix_i\bigr)\;=\;\tsum c_i\ch\group(x_i)\;,
  \end{equation*}
  and since the action, sum and scaling are continuous, ${\cch\group(x)=\msum c_i\cch\group(x_i)}$.
\end{proof}

To prove \cref{theorem:day}, we need a few auxiliary results. The first is a variant of Jensen's inequality for barycenters.
\begin{lemma}
  \label{lemma:jensen}
  Let $P$ be a Radon probability measure on a locally convex Hausdorff space $X$ whose barycenter exists.
  Then ${f(\sp{P})\leq P(f)}$ holds for every proper convex lsc function ${f:X\to\mathbb{R}}$.
  If the closed convex hull of a set ${M\subset X}$ is compact, we also have
  ${f\sp{P}\leq\sup\braces{f(x)|x\in M}}$ for every propability measure $P$ on $\overline{M}$.
\end{lemma}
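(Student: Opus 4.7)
The plan is to prove this via the standard convex-analytic representation of proper convex lsc functions as pointwise suprema of their continuous affine minorants. Specifically, for a proper convex lsc ${f:X\to\mathbb{R}}$ on a locally convex Hausdorff space, a Hahn-Banach separation of the epigraph from points below it yields
\begin{equation*}
  f(x)\;=\;\sup\braces{\ell(x)\,|\,\ell\text{ continuous affine on }X,\;\ell\leq f}\;,
\end{equation*}
a fact recorded in \citep{Aliprantis:Border:2006,Borwein:Vanderwerff:2010}. The rest of the argument is bookkeeping around this representation.

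First I would fix a single continuous affine minorant $\ell=y+c$, with ${y\in X^*}$ and ${c\in\mathbb{R}}$. The defining property of the barycenter immediately gives ${\ell(\sp{P})=y(\sp{P})+c=P(y)+c=P(\ell)}$, so $\ell$ is $P$-integrable with finite integral. From ${\ell\leq f}$ one deduces the pointwise bound ${f^{-}\leq\ell^{-}}$, hence ${P(f^{-})\leq P(\ell^{-})<\infty}$; this ensures $P(f)$ is well-defined in ${(-\infty,+\infty]}$, and monotonicity yields ${P(\ell)\leq P(f)}$. Thus ${\ell(\sp{P})\leq P(f)}$, and taking the supremum over all such $\ell$ gives ${f(\sp{P})\leq P(f)}$.

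For the second statement, observe that $\overline{M}$ is compact as a closed subset of the compact set $\cch M$, so \cref{fact:choquet} guarantees that every Radon probability $P$ on $\overline{M}$ has a barycenter in $\cch M$, and part (i) applies. It then suffices to establish the pointwise bound ${f(z)\leq\sup_{M}f}$ for ${z\in\overline{M}}$, since integrating against $P$ and chaining with part (i) finishes the proof. This pointwise bound again follows from the affine-minorant representation: for any continuous affine ${\ell\leq f}$ and any net ${(x_\alpha)\subset M}$ with ${x_\alpha\to z}$, continuity of $\ell$ yields ${\ell(z)=\lim\ell(x_\alpha)\leq\sup_{M}f}$; taking a supremum over $\ell$ gives ${f(z)\leq\sup_{M}f}$.

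The one mildly delicate point is ensuring $P(f)$ is well-defined without assuming any a priori lower bound on $f$. The affine-minorant trick handles this transparently, since each minorant is automatically $P$-integrable once $P$ has a barycenter, and the inequality ${f^-\leq\ell^-}$ supplies the required integrable envelope. Everything else is routine convex analysis.
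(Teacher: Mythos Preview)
Your proof is correct. For the first claim it coincides with the paper's argument: both represent $f$ as the pointwise supremum of its continuous affine minorants and use that affine functionals commute with barycenters. Your extra care about the well-definedness of $P(f)$ via the integrable envelope $\ell^-$ is a nice touch that the paper omits.

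For the second claim your route differs from the paper's. The paper does \emph{not} chain through the first inequality; instead it bounds $\sup_{a<f}P(a)\leq\sup_{a<f}\sup_{K}a$ directly, invokes Bauer's maximum principle to replace $K=\cch M$ by $\ex K$, then uses ${\ex K\subset\overline{M}}$ and lower semicontinuity to pass to $\sup_M f$. You instead use part (i) to get $f\sp{P}\leq P(f)$, then the trivial bound $P(f)\leq\sup_{\overline{M}}f$, and finally $\sup_{\overline{M}}f\leq\sup_M f$ via lower semicontinuity (which you re-derive through minorants, though quoting lsc directly would suffice). Your argument is more elementary---it avoids Bauer and any extreme-point machinery---while the paper's version makes the role of the extreme points explicit, which resonates with how the lemma is later used in the proof of \cref{theorem:day}.
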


\begin{proof}
  \step
  A proper convex lsc function is the pointwise supremum ${f(x)=\sup a(x)}$, taken over all affine continuous
  functions ${a:X\rightarrow\mathbb{R}}$ that satisfy $a<f$ \citep[][7.6]{Aliprantis:Border:2006}.
  Since ${P(a)=a\sp{P}}$ holds by the definition of barycenters whenever $a$ is linear, it also
  holds if $a$ is affine. We hence have
  ${f\sp{P}=\sup_{a<f}a\sp{P}=\sup_{a<f}P(a)\leq P(f)}$.
  \\[.5em]
  \step By Bauer's maximum principle, each $a$ attains its maximum on the compact convex set ${K:=\cch M}$ at an extreme point, so
  \begin{equation*}
    f\sp{P}
    =
    \sup_{a<f}P(a)
    \leq
    \sup_{a<f}\sup_{x\in K}a(x)
    \leq
    \sup_{a<f}\sup_{x\in\ex K}a(x)
    =
    \sup_{x\in\ex K}\sup_{a<f}a(x)
    =
    \sup_{x\in\ex K}f(x)\;.    
  \end{equation*}
  All extreme points of $K$ are in the closure of $M$ \citep[][1.5]{Phelps:2001}, so ${\sup_{\ex K}f=\sup_{\overline{M}}f}$, and since $f$ is lsc, 
  ${\sup_{\,\overline{M}}f=\sup_Mf}$.
\end{proof}

\def\pax{\eta_A}

Our strategy for the proof of \cref{theorem:day} is to represent the \Folner average $\empavg_n$ as the barycenter of a probability measure
supported on the orbitope. The next result provides such a measure:
\begin{lemma}
  \label{lemma:integrals:1}
  Let a nice group $\group$ act continuously on a Hausdorff space $X$. Let 
  ${A\subset\group}$ be compact. Then for each ${x\in X}$, 
  \begin{equation}
    \label{eq:pax}
    \eta_{A}(B)\;:=\;\frac{|\braces{\phi\in A\,|\,\phi(x)\in B}|}{|A|}\qquad\text{ for }B\subset X\text{ Borel}
  \end{equation}
  defines a Radon probability measure on $X$ that satisfies 
  \begin{equation*}
    |\pax(f-f\circ\psi)|\;\leq\;\mfrac{|A\vartriangle\psi A|}{|A|}\sup\nolimits_{\phi\in A}|f(\phi(x))|
  \end{equation*}
  for every continuous function ${f:\xspace\to\mathbb{R}}$ and every ${\psi\in\group}$.
\end{lemma}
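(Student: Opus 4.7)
The plan is to realize $\eta_A$ as a pushforward of (normalized) Haar measure under the orbit map, which delivers both measure-theoretic assertions in one stroke, and then to obtain the displayed inequality by a left-translation change of variables and a symmetric-difference bound.

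First I would introduce the orbit map $T_x\colon\group\to X$ given by $T_x(\phi):=\phi(x)$, which is continuous because the action is continuous. Since $\group$ is nice, Haar measure is Radon and $\sigma$-finite, so $\lambda_A:=|A|^{-1}1_A\,|d\phi|$ is a Radon probability measure on $\group$ supported on the compact set $A$. Then \eqref{eq:pax} is nothing but $\eta_A=(T_x)_*\lambda_A$, and pushing a Radon probability measure forward along a continuous map into a Hausdorff space yields a Radon probability measure, supported on the compact set $T_x(A)$. This settles the first claim.

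For the bound, I would express both sides through $T_x$,
\begin{equation*}
  \eta_A(f)=\mfrac{1}{|A|}\mint_A f(\phi x)|d\phi|
  \qquad\text{ and }\qquad
  \eta_A(f\circ\psi)=\mfrac{1}{|A|}\mint_A f(\psi\phi x)|d\phi|,
\end{equation*}
and then apply the substitution $\phi\mapsto\psi^{-1}\phi$ in the second integral. Left-invariance of Haar measure turns the domain $A$ into $\psi A$, giving $\eta_A(f\circ\psi)=|A|^{-1}\int_{\psi A} f(\phi x)|d\phi|$. Combining the two yields
\begin{equation*}
  \eta_A(f-f\circ\psi)\;=\;\mfrac{1}{|A|}\mint_\group f(\phi x)\bigl(1_A(\phi)-1_{\psi A}(\phi)\bigr)|d\phi|,
\end{equation*}
and the triangle inequality together with $|1_A-1_{\psi A}|=1_{A\vartriangle\psi A}$ produces the required bound (with the supremum naturally taken over $A\cup\psi A$, which is compact, so the sup is finite because $f$ is continuous).

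There is no serious obstacle here: the only subtle point is the left-invariance change of variables and the bookkeeping of the domain $\psi A$ versus $A$, which is what converts an integral over $A$ into a mass estimate over the symmetric difference. Radon regularity of the pushforward is standard once $\eta_A$ is recognized as $(T_x)_*\lambda_A$.
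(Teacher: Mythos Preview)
Your approach is essentially identical to the paper's: both realize $\eta_A$ as the pushforward of normalized Haar measure on $A$ under the orbit map $\phi\mapsto\phi(x)$, and both obtain the inequality by a left-translation change of variables that produces an integral over the symmetric difference. Your observation that the supremum naturally falls over $A\cup\psi A$ rather than $A$ alone is correct and is a genuine (if harmless) slip in the stated bound; the paper's own proof makes the same slip, and it does not affect the downstream use of the lemma since one only needs the right-hand side to vanish along a F{\o}lner sequence.
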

\begin{proof}
  The measure ${U_{\!A}(\argdot):=|\argdot\cap A|/|A|}$ is the uniform distribution on $A$, and is Radon since
  $\group$ is Polish. We can write $\pax$ as the image measure ${\pax=U_{\!A}\circ t_x^{-1}}$ of 
  under the map ${t_x(\phi):=\phi(x)}$.
  Since $\group$ acts continuously, ${t_x:\group\rightarrow X}$ is continuous.
  Since $A$ is compact, a set ${C\subset A}$ is compact if and only if $t_x(C)$ is. It follows that
  \begin{equation*}
    \sup\braces{\pax(D)|D\subset B\text{ compact}}
    \;=\;
    \sup\braces{U_{\!A}(C)|C\subset t_x^{-1}B\text{ compact}}
    \;=\;
    U_{\!A}(t_x^{-1}B)
  \end{equation*}
  for any Borel set ${B\subset\xspace}$, so $\pax$ is Radon. If ${\psi\in\group}$, substituting into the definition shows
  \begin{equation*}
    \pax(\psi^{-1}B)
    \;=\;
    \frac{|\braces{\phi\in A\,|\,\psi\phi(x)\in B}|}{|A|}
    \;=\;
    \frac{|\braces{\phi\in\psi^{-1}A\,|\,\phi(x)\in B}|}{|\psi^{-1}A|}
    \;=\;
    \eta_{\psi^{-1}A}(B)
  \end{equation*}
  If ${f:X\rightarrow\mathbb{R}}$ is continuous, ${\phi\mapsto f(\phi(x))}$ is continuous and hence bounded on $A$,
  which shows $f$ is ${\pax}$-integrable. We then have
  \begin{equation*}
    \begin{split}
      |\pax(f-f\circ\psi)|
      \;&=\;
      |\pax(f)-\psi_*\pax(f)|\\[.4em]
      \;&=\;
      |\pax(f)-\eta_{\psi^{-1}A}(f)|\\
    \;&=\;
    \mfrac{1}{|A|}
    \big|\mint_{A\vartriangle\psi^{-1} A} f(\phi(x))|d\phi|\big|
    \;\leq\;
    \mfrac{|A\vartriangle \psi^{-1} A|}{|A|}
    \sup_{\phi\in A}|f(\phi(x))|
    \end{split}
  \end{equation*}
    for each ${\psi\in\group}$.
\end{proof}

\begin{lemma}
  \label{lemma:integrals:2}
  Let a nice group $\group$ act linearly and continuously on a locally convex Hausdorff space $X$, and let $\orb(x)$ be compact.
  Then the integral ${|A|^{-1}\int_A\phi{x}|d\phi|}$ exists for each compact ${A\subset\group}$,
  is the barycenter of $\pax$, and is contained in $\orb(x)$.
\end{lemma}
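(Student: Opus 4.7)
The plan is to identify the integral as the barycenter of $\pax$ via a change of variables, then invoke the Choquet fact (\cref{fact:choquet}) to place this barycenter inside $\orb(x)$.

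First I would observe that $\pax$, as constructed in \cref{lemma:integrals:1}, is the image of the uniform distribution $U_{\!A}(\argdot):=|\argdot\cap A|/|A|$ on $A$ under the continuous evaluation map ${t_x:\group\to\xspace}$, ${t_x(\phi):=\phi(x)}$. Consequently $\pax$ is supported on ${t_x(A)\subset\group(x)\subset\orb(x)}$. Since $\orb(x)$ is assumed compact and is the closed convex hull of $\group(x)$, \cref{fact:choquet} guarantees that $\pax$ (viewed as a Radon probability on the compact set $\overline{\group(x)}\subset\orb(x)$) admits a barycenter $\sp{\pax}$ in $\xspace$, and that this barycenter lies in $\orb(x)$.

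Next I would verify that this barycenter is exactly the integral in question. By definition of the weak integral and of the barycenter, it suffices to check that for every continuous linear ${\ell:\xspace\to\mathbb{R}}$,
\begin{equation*}
  \ell(\sp{\pax})\;=\;\mfrac{1}{|A|}\mint_{A}\ell(\phi(x))|d\phi|\;.
\end{equation*}
The left-hand side equals $\pax(\ell)$ by the defining property of the barycenter. The right-hand side is $U_{\!A}(\ell\circ t_x)$, and by the change-of-variables formula for image measures ${\pax=U_{\!A}\circ t_x^{-1}}$ we have ${\pax(\ell)=U_{\!A}(\ell\circ t_x)}$. The map ${\ell\circ t_x}$ is continuous on the compact set $A$, hence bounded and $U_{\!A}$-integrable, so this identity is legitimate. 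This shows the integral $|A|^{-1}\int_A\phi(x)|d\phi|$ exists in the Pettis sense, and coincides with $\sp{\pax}$, which is in $\orb(x)$ by the previous step.

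There is no real obstacle here; the content is essentially bookkeeping linking three standard notions (Pettis integral, barycenter, and image measure). The one point that merits care is the existence of the barycenter: this is where compactness of $\orb(x)$ enters, via \cref{fact:choquet}, since without some compactness assumption barycenters of Radon measures on general locally convex spaces need not exist. Once existence is in hand, uniqueness of the weak integral forces the identification with $\sp{\pax}$.
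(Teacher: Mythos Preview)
Your proposal is correct and follows essentially the same approach as the paper: both identify $\pax$ as the pushforward of the uniform measure on $A$ under the continuous map $t_x(\phi)=\phi(x)$, use compactness to guarantee existence of the barycenter via \cref{fact:choquet}, and then verify the barycenter equals the Pettis integral by testing against continuous linear functionals and applying the change-of-variables formula. The only cosmetic difference is that the paper works with the compact set $t_x(A)$ directly rather than $\overline{\group(x)}$, but the argument is the same.
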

\begin{proof}
  Define the continuous map $t_x$ as above. Since $t_x(A)$ is compact, every Radon measure concentrated
  on $t_x(A)$ has a barycenter in the closed convex hull of $t_x(A)$ \citep{Phelps:2001}.
  Since $\pax$ is such a Radon measure, its barycenter $\sp{\pax}$ exists and is in ${\cch t_x(A)\subset\orb(x)}$.
  For any continuous linear ${\ell:X\to\mathbb{R}}$, substituting into the barycenter definition shows
  \begin{equation*}
    \ell(\sp{\pax})
    \;=\;
    \mint\ell(z)\pax(dz)
    \;=\;
    \mint_A\ell(t_x(\phi))\mfrac{|d\phi|}{|A|}
    \;=\;
    \mfrac{1}{|A|}\mint_A\ell(\phi(x))|d\phi|\;,
  \end{equation*}
  which is just the definition of the integral.
\end{proof}

\begin{proof}[Proof of \cref{theorem:day}]
  Since $x$ is fixed, we abbreviate ${\orb=\orb(x)}$, and denote by $M$ the closure of the orbit $\group(x)$.
  For each $n$, denote by ${\eta_n:=\pax}$ the measure defined in \eqref{eq:pax} for the \Folner set ${A=\A_n}$.
  \\[.5em]
  \step
  By \cref{lemma:integrals:2}, we have ${\empavg_n(x)=\sp{\eta_n}}$. In particular, $\empavg_n(x)$ exists and is in $\orb$.
  \\[.5em]
  \step
  Let $\P(M)$ be the set of Radon probability measures on $M$. Since $\orb$ is compact Hausdorff, so is the closed subset $M$, which makes $\P(M)$ compact Hausdorff in vague
  convergence \citep{Phelps:2001}. Since ${\eta_n(M)=1}$ for each $n$, the sequence $(\eta_n)$ is in $\P(M)$. By compactness, it has a subsequence $(\eta_{i(n)})$ that converges to a limit ${\bar{\eta}\in\P(M)}$.
  By \cref{fact:choquet}, its barycenter ${\bar{x}:=\sp{\bar{\eta}}}$ exists and is in ${\cch M=\orb}$.
  \\[.5em]
  \step
  Since $M$ is compact, vague convergence on $M$ means ${\eta_n(f)\to\bar{\eta}(f)}$ for every continuous ${f:\xspace\to\mathbb{R}}$.
  Consider any ${\psi\in\group}$. \cref{lemma:integrals:1} and the \Folner propert \eqref{eq:folner} then show
  \begin{equation*}
    |\bar{\eta}(f-f\circ\psi)|\;=\;\lim_n|\eta_n(f-f\circ\psi)|\;\leq\;\lim_n\mfrac{|\A_n\vartriangle\psi^{-1}\A_n|}{|\A_n|}\sup|f|\;=\;0\;.
  \end{equation*}
  We hence have ${\psi_*\bar{\eta}(f)=\bar{\eta}(f)}$ for every continuous $f$. 
  Since the vague topology separates points in $\P(\orb)$, it follows that ${\psi_*\bar{\eta}=\bar{\eta}}$, so the limit $\bar{\eta}$ is a $\group$-invariant measure.
  For any linear continuous ${\ell:X\to\mathbb{R}}$, the map ${\ell\circ\psi}$ is again linear continous, since the action is linear and continous.
  \begin{equation*}
    \ell(\psi(\sp{\bar{\eta}})\;=\;\bar{\eta}(\ell\circ\psi)\;=\;\psi_*\bar{\eta}(\ell)\;=\;\bar{\eta}(\ell)\;=\;\ell(\bar{\eta})
    \quad\text{ for }\ell:X\to\mathbb{R}\text{ linear continous }
  \end{equation*}
  since $\psi$, and hence $\ell\circ\psi$, is linear and continuous. As the linear continuous maps separate points in $\xspace$, that implies ${\psi\bar{\eta}=\bar{\eta}}$.
  Thus, the barycenter $\bar{x}$ is $\group$-invariant.
  \\[.5em]
  \step
  Consider the convex lsc function $f$ in \eqref{eq:day}. If $f=\infty$ everywhere on $\orb$, \eqref{eq:day} is trivially true.
  If ${f<\infty}$ somewhere, $f$ is proper. Since ${\bar{\eta}\in\P(M)}$ and hence ${\bar{\eta}(M)=1}$, \cref{lemma:jensen} shows
  \begin{equation*}
    f(\bar{x})\;\leq\;\sup\nolimits_{z\in\group(x)}f(z)\;=\;\sup\nolimits_{\phi\in\group}f(\phi z)\;.
  \end{equation*}
  In summary, we have shown that (i) and (ii) hold.
  \\[.5em]
  \step
  \def\ell{{g}}
  Since $K$ is closed convex and $\group$-invariant, the orbitopes of all ${x\in K}$ are in $K$, and hence compact. It follows by (ii) that $K_\group$ is not empty.
  Since the action is linear and continuous, the set $\xspace_\group$ of all invariant elements of $\xspace$ is a closed linear subspace, and
  ${K_\group=K\cap\xspace_{\group}}$ is a hence a compact convex set.
  By Bauer's extremum principle \citep[][]{Aliprantis:Border:2006}, $\ell$ attains its infimum at an extreme point $z_\ell$ of $K$.
  Let $\bar{z}$ be an invariant element of $\orb(z_\ell)$. The sublevel sets $[\ell\leq r]$ of $\ell$ are convex and $\group$-invariant (since $\ell$ is),
  and closed (since $\ell$ is lsc). It follows that
  \begin{equation*}
    \bar{z}\;\in\;\orb(z_\ell)\;\subset\;K\cap[\ell\leq\ell(z_\ell)]\;=\;\argmin_{K}\ell
    \quad\text{ and hence }\quad
    \min_K\ell\;=\;\min_{K_\group}\ell\;.
  \end{equation*}
  Since the set of minimizers of a convex function on a convex set is either extreme or empty, the minimium is attained at an extreme point of $K_\group$.
\end{proof}

\section{Proofs for Section \ref{sec:duality}}
\label{proof:sec:duality}

\begin{proof}[Proof of \cref{lemma:alaoglu}]
  \step The action has bounded orbits iff ${\sup_\phi\|\phi(x)\|<\infty}$ for each $x$, and hence iff the set ${\mathcal{T}:=\braces{\phi:\xspace\to\xspace|\phi\in\group}}$ of bounded linear operators
  is bounded pointwise on $X$. That is the case iff $\mathcal{T}$ is has bounded operator norm \citep[][6.14]{Aliprantis:Border:2006}. Similarly, the dual action defines a set $\mathcal{T}'$,
  and has bounded orbits iff $\mathcal{T}'$ is norm-bounded. Since $\mathcal{T}'$ consits of the adjoints of maps in $\mathcal{T}$, and a linear operator and its adjoint have the same norm,
  an action has bounded orbits if and only if its dual action does. This shows (i).
  \\[.5em]
  \step Substituting the definition of dual actions into that of the dual norm shows (ii).
  \\[.5em]
  \step
  Fix any ${y\in\yspace}$. If the orbits are bounded, the convex hull $\ch\group(y)$ is contained in a closed norm ball $B$. Since the weak* closure $\orb(y)$ is also norm-closed, it
  is likewise in $B$. By Alaoglu's theorem, $B$ is weak* compact, and the same hence holds for $\orb(y)$.
\end{proof}

\begin{proof}[Proof of \cref{::annihilators}]
  The set ${\orb(x)-x}$ is the closed convex hull of all vectors ${\phi(x)-x}$.
  Since the actions are dual, such vectors satisfy
  \begin{equation*}
    \sp{\phi(x)-x,y}\;=\;\sp{x,\phi^{-1}y-y}\qquad\text{ for all }y\in\yspace\;.
  \end{equation*}
  The vectors ${\phi y-y}$, for all $\phi$ and ${y\in\yspace}$, clearly form a vector space, say $V$.
  Observe that $x$ is $\group$-invariant if and only if ${\phi(x)-x=0}$ for all ${\phi}$, or equivalently, iff
\begin{equation*}
  \sp{\phi(x)-x,y}\;=\;0\qquad\text{ for all }\phi\in\group\text{ and }y\in\yspace\;.
\end{equation*}
It follows that $V$ is a subspace of the annihilator ${\xspace_\group^\perp}$ of $\xspace_\group$. Since
$x$ is only invariant if it annihilates all elements of $V$, we also have ${V^\perp=\xspace_\group}$.
Since ${V^{\perp\perp}}$ is the closure of $V$ \citep[][5.107]{Aliprantis:Border:2006}, this shows
${\xspace_\group^\perp}$ is the closure of ${\braces{\phi y-y\,|\,\phi\in\group\text{ and }y\in\yspace}}$.
This implies in particular ${\xspace_\group^\perp}$ contains ${\orb(y)}$.
Similarly, $\yspace_\group^\perp$ is the weak* closure of all functions ${\phi(x)-x}$ in $\xspace$, and
${\yspace_\group^\perp}$ contains ${\orb(x)}$.
\end{proof}

\begin{proof}[Proof of \cref{result:contraction}]
  \step
  We first use the weak topology. Let $\orb$ be a weakly compact orbitope in $\xspace$.
  Then each ${x\in\orb}$ has a compact orbitope, since ${\orb(x)\subset\orb}$.
  By \cref{lemma:integrals:2}, that implies that ${\empavg_n(x)}$ exists and is a barycenter,
  and \cref{lemma:jensen} shows
  \begin{equation*}
    f(\empavg_n(x))\;\leq\;\mfrac{1}{|\A_n|}\mint_{\A_n}f(x\circ\phi)|d\phi|
    \qquad\text{ for }f:\xspace\to\mathbb{R}\text{ proper convex lsc.}
  \end{equation*}
  The norm function is proper, convex, and weakly lsc \citep[][6.22]{Aliprantis:Border:2006}. It is also $\group$-invariant, since the action is isometric.
  It follows that
  \begin{equation}
    \label{eq:proof:contraction}
    \|\empavg_n(x)\|\;\leq\;\mfrac{1}{|\A_n|}\mint_{\A_n}\|x\circ\phi\|\,|d\phi|\;=\;\|x\|
    \qquad\text{ for each }x\in\orb\;.
  \end{equation}
  \step Now consider the norm topology. Let $x$ be a point in $\orb$. If a sequence $(x_i)$ in $\orb$ converges in norm to $x$, then
  \begin{align*}
    \|\empavg_nx\|
    \;&=\;
    \|\empavg_n(x-x_i+x_i)\|
    &&\\
    \;&=\;
    \|\empavg_n(x-x_i)+\empavg_nx_i\|
    &&\text{ since ${z\mapsto\empavg_n(z)}$ linear}\\
    \;&=\;
    \liminf_i\|\empavg_n(x-x_i)+\empavg_nx_i\|
    &&\text{ independent of }i\\
    \;&\leq\;
    \liminf_i\|\empavg_n(x-x_i)\|+\|\empavg_nx_i\|
    &&\text{ the norm is subadditive}\\
    \;&\leq\;
    \liminf_i\|x-x_i\|+\|\empavg_nx_i\|
    &&\text{ by \eqref{eq:proof:contraction}}
    \\
    \;&\leq\;
    \liminf_i\|\empavg_nx_i\|
    &&\text{ since }\|x-x_i\|\rightarrow 0\;.
  \end{align*}
  (This does not assume norm continuity of the integral, since $\empavg_n$ is eliminated before the limit is applied.)
  We have therefore shown that $\|\empavg_n(\argdot)\|$ is norm lsc on $\orb$.
  \\[.5em]
  \step For elements of ${\group(x)-x}$, we have
    \begin{align*}
      \|\empavg_n(\phi(x)-x)\|
      \;&=\;
      \|\mint_{\A_n^{-1}}\psi(\phi(x)-x)|d\psi|\|
      &&
      \text{definition of }\empavg_n\\
      \;&=\;
      \|\mint_{\A_n^{-1}\phi}\psi x|d\psi|-\int_{\A_n^{-1}}\psi x|d\psi|\|
      &&
      \\
      \;&\leq\;
      \mint_{\A_n^{-1}\vartriangle\phi\A_n^{-1}}\|\psi(x)\||d\psi|
      &&
      \text{triangle inequality}\\
      \;&=\;
      \mint_{\A_n^{-1}\vartriangle\phi\A_n^{-1}}\|x\||d\psi|
      &&\text{norm is $\group$-invariant}\\
      \;&=\;
      \mfrac{|\A_n\vartriangle\phi^{-1}\A_n|}{|\A_n|}\|x\|
      &&
      \text{since ${|A|=|A^{-1}|}$ for ${A\subset\group}$}\;.
    \end{align*}
    Since each point $z\in\ch\group(x)$ is a finite convex combination ${z=\sum_{i\leq k}c_k\phi_kx}$,
    \begin{equation}
      \label{eq:proof:contraction:2}
      \|\empavg_n(z-x)\|\;=\;\|\tsum_{i\leq k}c_k\empavg_n(\phi_kx-x)\|
      \;\leq\;
      \max_{i\leq k}\mfrac{|\A_n\vartriangle\phi_i^{-1}\A_n|}{|\A_n|}\|x\|\;.
    \end{equation}
  \step
  All arguments in step 1 and 2 apply analogously on the dual space $Y$, since the dual norm is weak* lsc
  \citep[][6.22]{Aliprantis:Border:2006}. It follows that the function ${y\mapsto\|\empavg_n(y)\|}$ is lsc in the dual norm at every
  point $y$ whose orbitope is weak* compact. Since the action is isometric, that is true for all $y$ by \cref{lemma:alaoglu},
  so norm lower semicontinuity holds everywhere on $\yspace$. Moreover, \eqref{eq:proof:contraction:2} holds for each ${y\in\yspace}$ and each
  $z$ in the convex hull of $\group(y)$. We have therefore established (ii).
  \\[.5em]
  \step On the primal space $\xspace$, the function ${\|\empavg_n(\argdot)\|}$ is also weakly lsc, since the norm and weak topology have the same convex lsc functions
  \citep[][5.99]{Aliprantis:Border:2006}. 
  \\[.5em]
  \step
  It remains to verify the contraction ${\|\empavg_n(\orb)\|\to 0}$ in (i). Let $x$ be a point in $\xspace$ with ${\orb(x)=\orb}$. Since the convex hull $\ch\group(x)$ is 
  dense in $\orb(x)$, it hence suffices to show ${\|\empavg_n(z_1-z_2)\|\to 0}$ for all ${z_1,z_2\in\ch\group(x)}$. By
  \eqref{eq:contraction:2}, we have
  \begin{align*}
    \|\empavg_n(z_1-z_2)\|\;&=\;\|\empavg_n(z_1-x+x-z_2)\|\\
    \;&\leq\;\|\empavg_n(z_1-x)\|+\|\empavg_n(z_2-x)\|\;\leq\;
    \mfrac{|\A_n\vartriangle\phi^{-1}\A_n|}{|\A_n|}\|x\|
  \end{align*}
  for some ${\phi\in\group}$.
\end{proof}

\section{Proofs for Section \ref{sec:hilbert:Lp}}
\label{proofs:sec:hilbert:Lp}

\begin{proof}[Proof of \cref{result:orbitope:unitary:action}]
  The action is isometric by \eqref{H:inner:product:invariant}. By \cref{lemma:alaoglu}, $\orb(x)$ is weakly compact.
  Since ${\phi(x)=\bar{x}+\phi(x)^\perp}$, and ${\sp{\phi(x)^\perp,\bar{y}}=\sp{x^\perp,\bar{y}}=0}$ for ${\bar{y}\in\xspace_\group}$, we have
  \begin{equation*}
    \orb(x)\;\subset\;\bar{x}+\text{closure}(\text{span}\,\group(x^\perp))\;\perp\;\xspace_\group\;.
  \end{equation*}
  Orthogonality implies all ${y\in\orb(x)}$ have the same projection onto $\xspace_\group$, so
  ${\bar{y}=\bar{x}}$.
  Consider a convex combination of two points $\psi x$ and $\phi x$ on the orbit. Since $\group$ is a group,
  we can choose $\psi$ as identity without loss of generality. Minkowski's inequality shows
  \begin{equation*}
    \|\lambda x+(1-\lambda)\phi x\|\;\leq\; \lambda\|x\|+(1-\lambda)\|\phi x\|\;=\;\|x\|\;,
  \end{equation*}
  so all $y$ in the convex hull of $\group(x)$ satisfy
  $\|y\|\leq\|x\|$. Since the norm function is continuous, the same extends to all $y$ in the closure of the convex hull. 
  If $\group$ is amenable, $\orb(x)\cap\xspace_\group$ has at least one element, by \cref{theorem:day}. By orthogonality,
  there can be only one such element, and this element is $\bar{x}$.
\end{proof}

\section{Proofs for Section \ref{sec:probatopes}}
\label{proofs:sec:probatopes}

\begin{proof}[Proof of \cref{lemma:G:tight}]
  $P$ is $\group$-tight if and only if ${Q(K_\varepsilon)>1-\varepsilon}$ holds
  for some compact ${K_\varepsilon}$ and all ${Q\in\group(P)}$, for each ${\varepsilon>0}$.
  If and only if that is true, the same holds for all convex
  combinations
  of such $Q$. The convex hull of $\group(P)$ is hence a tight family
  of measures in the sense of Prokhorov's theorem iff $P$ is
  $\group$-tight, so its closure is compact iff $P$ is $\group$-tight.
\end{proof}
For the proof of \cref{result:TV:orbitopes}(i), we note that, for two measures ${\nu\ll\mu}$, 
\begin{equation}
  \label{transformed:density}
  \frac{d(\phi_*\nu)}{d(\phi_*\mu)}
  \;=\;
  \frac{d\nu}{d\mu}\circ\phi^{-1}
  \qquad\mu\text{-a.e. for each }\phi\in\group
\end{equation}
holds by \eqref{eq:image:measure:int}.
The proof of (ii) uses the orbit coupling theorem of H. Thorisson:
\begin{fact}[Thorisson \citep{Thorisson:1996}]
  \label{fact:thorisson}
  Let a locally compact Polish group $\group$ act measurably on a Polish space $\Omega$,
  and let $\Sigma_\group$ be the $\sigma$-algebra of $\group$-invariant Borel sets.
  Two probability measures $P$ and $Q$ coincide on $\Sigma_\group$ if and only if, for every
  ${\zeta\sim P}$, there is a random element $\Psi$ of $\group$ such that
  ${\Psi\zeta\sim Q}$.
\end{fact}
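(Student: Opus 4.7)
The plan for (i) is to realize the TV orbitope as the isometric image of an $\L_1$-orbitope under the density map. First, since $\mu$ is $\group$-invariant, $\mu(A)=0$ implies $\mu(\phi^{-1}A)=\phi_*\mu(A)=0$, so $P\ll\mu$ gives $\phi_*P\ll\mu$ for every $\phi\in\group$; convex combinations of absolutely continuous measures remain absolutely continuous, and TV-limits preserve absolute continuity via the bound $|\nu_n(A)-\nu(A)|\leq\|\nu_n-\nu\|_{\TV}$. Hence $\orb_{\TV}(P)\ll\mu$. Now consider the Radon--Nikod\'ym map $R:\nu\mapsto d\nu/d\mu$ from the subspace of $\mu$-absolutely continuous signed measures in $\ca$ onto $\L_1(\mu)$. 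Using the Jordan decomposition, $R$ is a linear bijection that is isometric for the TV and $\L_1$ norms, and identity \eqref{transformed:density} shows $R(\phi_*\nu)=R(\nu)\circ\phi^{-1}$, so $R$ intertwines the action on signed measures with the action \eqref{Lp:action} on $\L_1(\mu)$. Therefore $R$ sends $\group(P)$ to $\group(dP/d\mu)$, commutes with convex combinations by linearity and with closures by the isometry, and yields the claimed identity $R(\orb_{\TV}(P))=\orb_1(dP/d\mu)$. Compactness transfers because $R$ restricts to an isometric homeomorphism between $\orb_{\TV}(P)$ and $\orb_1(dP/d\mu)$.

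For (ii), the plan is to combine \cref{theorem:day} with Thorisson's theorem. If $\orb_{\TV}(P)$ is compact, it is a compact, convex, $\group$-invariant subset of the locally convex space $(\ca,\|\argdot\|_{\TV})$ under a linear continuous (in fact TV-isometric) action, so \cref{theorem:day}(i) produces a $\group$-invariant element $\bar{P}\in\orb_{\TV}(P)$. Because the set $\P$ of probability measures is convex and TV-closed, $\orb_{\TV}(P)\subset\P$, so $\bar{P}$ is itself a probability measure. I would then verify that $P$ and $\bar{P}$ coincide on the $\sigma$-algebra $\Sigma_\group$ of $\group$-invariant Borel sets: for every $A\in\Sigma_\group$ and every $\phi\in\group$ we have $\phi_*P(A)=P(\phi^{-1}A)=P(A)$, convex combinations preserve this equality, and the TV-bound above preserves it under limits; thus $\bar{P}|_{\Sigma_\group}=P|_{\Sigma_\group}$. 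Applying Thorisson's orbit coupling theorem (\cref{fact:thorisson}) with $Q:=\bar{P}$, for any $\xi\equdist P$ there is a random element $\Psi\in\group$, defined on a suitable extension of the underlying probability space, such that $\Psi\xi\equdist\bar{P}$. Invariance of $\bar{P}$ then gives $\phi(\Psi\xi)\equdist\Psi\xi$ for every $\phi\in\group$.

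The main obstacle is the bookkeeping in (i), specifically the verification that $R$ is an isometry on the full subspace of signed absolutely continuous measures, not just on probabilities, and that the TV-topology on that subspace agrees with the $\L_1$-topology pulled back through $R$. Both facts follow from $\|\nu\|_{\TV}=\|d\nu/d\mu\|_1$ via the Jordan decomposition $\nu=\nu^+-\nu^-$, whose densities have disjoint supports, so $\int|d\nu/d\mu|d\mu=\nu^+(\Omega)+\nu^-(\Omega)=\|\nu\|_{\TV}$. In (ii), the only remaining subtlety, namely that $\xi$ may need to be enlarged to carry the randomization $\Psi$, is absorbed by the statement of \cref{fact:thorisson} itself.
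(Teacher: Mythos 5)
Your proposal does not prove the statement in question. The statement is Thorisson's orbit coupling theorem itself (\cref{fact:thorisson}): $P$ and $Q$ coincide on the invariant $\sigma$-algebra $\Sigma_\group$ if and only if some random element $\Psi$ of $\group$ carries a $P$-distributed $\zeta$ to a $Q$-distributed $\Psi\zeta$. What you have written is instead a proof of \cref{result:TV:orbitopes} (domination of the TV orbitope by $\mu$, its identification with the $\L_1$ orbitope of the density, and the coupling consequence of compactness), and in your step (ii) you explicitly invoke \cref{fact:thorisson} as a known tool. Read as a proof of that fact, the argument is therefore circular: the one genuinely hard step---constructing $\Psi$ from the hypothesis $P|_{\Sigma_\group}=Q|_{\Sigma_\group}$---is never addressed. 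Note that the paper does not prove this fact either; it is imported from \citep{Thorisson:1996}. Only the ``if'' direction is elementary: if ${A\in\Sigma_\group}$ then ${\Psi\zeta\in A}$ holds pointwise iff ${\zeta\in A}$, since $A$ is invariant under every group element, so ${Q(A)=\mathbb{P}(\Psi\zeta\in A)=\mathbb{P}(\zeta\in A)=P(A)}$. The converse requires Thorisson's shift-coupling construction (a disintegration of $P$ and $Q$ over $\Sigma_\group$ combined with a transfer and measurable-selection argument along orbits, on a suitable extension of the underlying probability space), none of which appears in your proposal.

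For what it is worth, read as a proof of \cref{result:TV:orbitopes} your argument is sound and follows essentially the same route as the paper: the Radon--Nikod\'ym map as a TV-to-$\L_1$ isometry intertwining the actions via \eqref{transformed:density} for part (i), and \cref{theorem:day} together with agreement of all elements of $\orb_{\TV}(P)$ on $\Sigma_\group$ and an appeal to \cref{fact:thorisson} for part (ii). But that is a different theorem from the one you were asked to prove, and it presupposes rather than establishes the present statement.
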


\begin{proof}[Proof of \cref{result:TV:orbitopes}]
  Let ${\mathcal{P}_\mu}$ be the set of all probability measures absolutely continuous with respect to $\mu$.
  \\[.2em]
  \step
  Since $\mu$ is $\group$-invariant, \eqref{transformed:density} implies that  $\P_\mu$ is $\group$-invariant,
  so ${P\in\P_\mu}$ implies ${\group(P)\subset\P_\mu}$. Morevoer, $\P_\mu$ is clearly convex, and
  since convergence in total variation preserves absolute continuity, it is closed. The closed
  convex hull ${\orb_{\TV}(P)}$ is hence again in $\P_\mu$, so $\mu$ dominates $\orb_{\TV}(P)$.
  \\[.2em]
  \step
  Let ${r:\P_\mu\rightarrow\L_1(\mu)}$ be the map ${Q\mapsto dQ/d\mu}$ from a measure to its density.
  By the Radon-Nikodym theorem, $r$ is a linear bijection of $P_\mu$ and ${\L_1(\mu)}$, and
  both $r$ and its inverse are norm-norm continuous \citep[][\S 13.6]{Aliprantis:Border:2006}.
  By \eqref{transformed:density}, we also have ${r(\phi_*Q)=r(Q)\circ\phi}$, since $\mu$ is $\group$-invariant.
  In other words, $r$ commutes with norm-closures, convex combinations, and elements of $\group$. It follows that
  \begin{equation*}
    d\orb_\TV(P)/d\mu
    \;=\;
    r(\orb_\TV(P))
    \;=\;
    \orb_1(r(P))
    \;=\;
    \orb_1(dP/d\mu)\;.
  \end{equation*}
  \step It remains to show (ii). 
  Let $\Sigma_\group$ be the $\sigma$-algebra of $\group$-invariant Borel sets.
  Then $P$ and $\phi_*P$ coincide on $\Sigma_\group$ for every ${\phi\in\group}$, since
  ${P(\phi^{-1}A)=P(A)}$ if ${A\in\Sigma_\group}$.
  If measures ${P_1,P_2,\ldots}$ coincide with $P$ on $\Sigma_\group$, the same holds for any convex combination of these measure,
  and for their limit in total variation (since total variation
  implies convergence on each Borel set). Thus, all measures in $\orb_{\TV}(P)$ have the same restriction to
  $\Sigma_\group$. By \cref{theorem:day}, $\orb_{\TV}(P)$ contains a $\group$-invariant measure $\bar{P}$, and we
  can apply \cref{fact:thorisson} to couple $P$ and $\bar{P}$.
  \end{proof}

\begin{proof}[Proof of \cref{lemma:cid:orbitope:pointmass}]
  Since ${\phi\delta_x=\delta_{\phi(x)}}$ for ${\phi\in\group}$, the orbit of $\delta_x$ is
  \begin{equation*}
    \group(\delta_x)\;=\;\braces{\delta_z\,|\,z\in\group(x)}
    \quad\text{ and }\quad
    \ch\group(\delta_x)\;=\;\P(\group(x))\;.
  \end{equation*}
  \step Since convergence in total variation implies setwise convergence, $\P(\group(x))$ is closed
  in total variation, which shows (i).
  \\[.2em]
  \step
  Since $\Omega$ is Polish, every closed subset is a $G_\delta$ set, and hence again Polish \citep{Kechris:1995}.
  The orbit closure $\overline{\group(x)}$ of every point $x$ is hence separable and metrizable.
  \\[.2em]
  \step
  Let ${x_1,x_2,\ldots}$ be points in $\xspace$. By the definition of convergence in distribution,
  \begin{align*}
    \delta_{x_n}\,\darrow\,\delta_{x}
    \quad\Leftrightarrow\quad
    f(x_n)\rightarrow f(x)
    \text{ all bounded continuous }f
    \quad\Leftrightarrow\quad
    x_n\,\rightarrow\,x \text{ in }\xspace\;.
  \end{align*}
  That shows ${\delta_z\in\overline{\group(\delta_x)}\Leftrightarrow z\in\overline{\group(x)}}$
  and hence
  \begin{equation*}
    \Pi_{\cid}(\delta_x)\;=\;\cch\overline{\group(\delta_x)}\;=\;\cch\braces{\delta_z|z\in\overline{\group(x)}}
    \;.
  \end{equation*}
  The convex hull of $\braces{\delta_z|z\in\overline{\group(x)}}$ are the finitely supported probability measures
  on $\overline{\group(x)}$. Since the orbit closure is metrizable, the finitely supported probabilities are 
  weak* dense in all probability measures \citep[][15.10]{Aliprantis:Border:2006}, so
  \begin{equation*}
    \Pi_{\cid}(\delta_x)
    \;=\;
    \cch\braces{\delta_z|z\in\overline{\group(x)}}
    \;=\;
    \P(\overline{\group(x)})\;.
  \end{equation*}
  Since the extreme points of the set of probability measures on a metrizable space are the
  point masses \citep[][15.9]{Aliprantis:Border:2006}, the
  extreme points of $\Pi_{\cid}(\delta_x)$ are the point masses on the orbit closure.
  That shows (ii).
  \\[.2em]
  \step
  Since $\Omega$ is separable and metrizable,
  the set of probability measures on a closed subset
  is a closed face of ${\P(\Omega)}$ \citep[][15.19]{Aliprantis:Border:2006}.
  In particular, ${\P(\overline{\group(x)})}$ is such a closed face.
\end{proof}

\begin{proof}[Proof of the Lemma in \cref{example:point:masses}]
    By \cref{lemma:cid:orbitope:pointmass},
    we can determine the orbitopes ${\Pi_{\TV}(\delta_x)}$ and ${\Pi_{\cid}(\delta_x)}$
    by determining the orbit $\mathbb{S}(x)$ and its closure ${\overline{\mathbb{S}(x)}}$.
    The closure is taken in the product topology on $\Omega$, and hence in element-wise convergence of sequences.
    Let $\mathcal{T}_{\mathbb{N}}$ be the set of injections ${\tau:\mathbb{N}\rightarrow\mathbb{N}}$. Then
    \begin{equation*}
      z=\lim\nolimits_k\phi_k x\;\text{ for some }\phi_1,\phi_2,\ldots\mathbb{S}
      \quad\Leftrightarrow\quad
      z=\tau x\;\text{ for some }\tau\in\mathcal{T}_{\mathbb{N}}
    \end{equation*}    
    To see this, note that any bijection $\tau$ can be represented in this way, and that an infinite sequence of
    permutations can delete entries of $x$ in the limit by ``swapping them out to infinity''. Thus,
    ${\overline{\mathbb{S}(x)}=\mathcal{T}_{\mathbb{N}}x}$.
    If ${x\in\Omega(n,\infty)}$, we have
    \begin{equation*}
      \mathbb{S}(x)\;=\;\Omega(m,\infty)
      \quad\text{ and }\quad
      \overline{\mathbb{S}(x)}\;=\;
      \mathcal{T}_{\mathbb{N}}x\;=\;\cup_{k\leq m}\Omega(k,\infty)
    \end{equation*}
    and analogous statements hold for ${\Omega(\infty,n)}$. For every ${x\in\Omega(\infty,\infty)}$, there is
    some bijection ${\tau_x\in\mathcal{T}_{\mathbb{N}}}$ such that ${\tau_x x=(0,1,0,1,\ldots)}$, so
    \begin{equation*}
      \mathbb{S}(x)\;\subset\;\Omega(\infty,\infty)
      \quad\text{ and }\quad
      \overline{\mathbb{S}(x)}\;=\;\mathcal{T}_{\mathbb{N}}x\;=\;\Omega(\infty,\infty)\;.
      \qedhere
    \end{equation*}
\end{proof}

\begin{proof}[Proof of \cref{lemma:cid:duality}]
  \step Since $\Omega$ is Polish, it is a normal Hausdorff space \citep[][3.21]{Aliprantis:Border:2006}. That implies
  the norm dual of $(\C_b,\|\argdot\|_{\sup})$ is ${(\ba_n,\|\argdot\|_{\TV})}$, and that $\ca$ is a norm-closed in $\ba_n$ \citep[][14.10]{Aliprantis:Border:2006}.
  The dual norm is ${\|\mu\|'=|\mu(\Omega)|}$ \citep{Aliprantis:Border:2006}, so its
  restriction to $\ca$ is $\|\argdot\|_{\TV}$.
  \\[.5em]
  \step Since convergence in total
  variation preserves non-negativity and total mass, $\P$ is norm-closed conves subset of $\ca$, and therefore also of $\ba_n$.
  It follows that ${\orb_{\TV}(P)=\orb'(P)}$.
  \\[.5em]
  \step Since each ${\phi\in\group}$ defines a bijection of $\Omega$, we have ${\|f\circ\phi\|_{\sup}=\|f\|_{\sup}}$, so the action is isometric.
  By \cref{lemma:alaoglu}, this implies $\orb_*(P)$ is compact.
  \\[.5em]
  \step Let $\tau$ denote the topology of convergence in measure on $\ca$. Then the restriction of the weak* topology to $\ca$ is $\tau$, and $\P$ is closed
  in $\tau$ \citep{Aliprantis:Border:2006}. A set ${A\subset\P}$ is therefore $\tau$-closed iff it is the restriction ${A=A'\cap\P}$ of a weak*-closed set ${A\subset\ba_n}$.
  Since closures of convex sets are convex, $\orb_*(P)$ is the smallest weak*-closed set that contains ${\ch\group(P)}$.
  Its restriction $\orb_*(P)\cap\P$ is then the smallest $\tau$-closed set that contains ${\ch\group(P)\cap\P=\ch\group(P)}$, which is just the definition of $\orb_{\cid}(P)$.
  \\[.5em]
  \step As $\P$ is $\tau$-closed, we have ${\orb_*(P)\cap\P=\orb_*(P)\cap\ca}$, so ${\orb_*(P)\setminus\orb_{\cid}(P)\subset\ba_n\setminus\ca}$.
  \\[.5em]
  \step
  Since a set and its closure  have the same support function, we have
  \begin{equation*}
    H(f|\orb_{\cid}(P))
    \;=\;
    H(f|\ch\group(P))
    \;=\;
    H(f|\orb_*(P))
    \qquad\text{ for all }f\in\C_b\;.
  \end{equation*}
  The identity ${H(f|\orb_*(P))=H(P|\orb(f))}$ follows by substituting into \eqref{eq:support:function:duality}.
\end{proof}

\section{Proofs for Section \ref{sec:mmd}}
\label{proofs:sec:mmd}

We first prove the three general lemmas in \cref{sec:mmd:tools}.

\begin{proof}[Proof of \cref{lemma:invariant:kernel}]
  \step
  The linear maps ${T_\phi:x\mapsto x\circ\phi}$ define an action of $\group$ on the vector space
  of all functions ${x:\Omega\to\mathbb{R}}$. Define the set
  ${\mathbf{H}_0:=\text{span}\braces{\kappa(\omega,\argdot)|\omega\in\Omega}}$. This 
  is a norm-dense linear subspace of $\mathbf{H}$ \citep{Steinwart:Christmann:2008}.
  Since $\kappa$ is diagonally invariant,
  \begin{equation*}
    T_\phi(\kappa(\omega,\argdot))\;=\;\kappa(\omega,\argdot)\circ\phi=\kappa(\phi^{-1}\omega,\argdot)\;\in\;\mathbf{H}_0\;.
  \end{equation*}
  This extends to the span $\mathbf{H}_0$ by linearity of $T_\phi$, so ${T_\phi(\mathbf{H}_0)\subset\mathbf{H}_0}$.
  The restrictions of the maps $T_\phi$ to $\mathbf{H}_0$ hence define a linear action of $\group$ on $\mathbf{H}_0$.
  \\[.5em]
  \step
  The reproducing property ${x(\omega)=\sp{x,\kappa(\omega,\argdot)}}$ and diagonal invariance of $\kappa$ imply
  \begin{align*}
    \sp{\kappa(\upsilon,\argdot)\circ\phi,\kappa(\omega,\argdot)\circ\phi}
    \;&=\;
    \sp{\kappa(\phi^{-1}\upsilon,\argdot),\kappa(\phi^{-1}\omega,\argdot)}\\
    \;&=\;
    \kappa(\phi^{-1}\upsilon,\phi^{-1}\omega)
    \;=\;
    \kappa(\upsilon,\omega)
    \;=\;
    \sp{\kappa(\upsilon,\argdot),\kappa(\omega,\argdot)}\;.
  \end{align*}
  Since the inner product is bilinear and continuous on $\mathbf{H}$, this again extends to $\mathbf{H}_0$
  by linearity, and to $\mathbf{H}$ by continuity, which shows
  \begin{equation*}
    \sp{x\circ\phi,y\circ\phi}=\sp{x,y}
    \quad\text{ and hence }\quad
    \|x\circ\phi\|=\|x\|
    \quad\text{ for }x,y\in\mathbf{H}\;.
  \end{equation*}
  The restrictions of the maps $T_\phi$ to $\mathbf{H}$ are hence a unitary action of $\group$ on $\mathbf{H}$.
  \\[.2em]
  \step
  Since the feature map is ${\Delta(\omega)=\kappa(\omega,\argdot)}$, we have
  \begin{equation*}
    \Delta(\phi\omega)\;=\;\kappa(\phi\omega,\argdot)\;=\;\kappa(\omega,\argdot)\circ\phi^{-1}\;=\;(T_{\phi^{-1}}\Delta)(\omega)
  \end{equation*}
  for each ${\phi\in\group}$, so $\Delta$ is $\group$-equivariant.
  \\[.2em]
  \step If $\kappa$ is separately invariant, an analogous argument yields
  ${\sp{x\circ\phi,y\circ\psi}=\sp{x,y}}$ for all ${\phi,\psi\in\group}$, and $\group$-invariance of $\Delta$.
\end{proof}

\begin{proof}[Proof of \cref{RKHS:ergodic:theorem}]
  \step Since the inner product is diagonally invariant by \cref{lemma:invariant:kernel}, the mean ergodic theorem
  (\cref{cocycle:mean:ergodic:theorem}) implies norm convergence of $\empavg_n(x)$ to $\bar{x}$. For pointwise convergence,
  let ${\delta_\omega(x):=x(\omega)}$ be the evaluation functional. Since $\mathbf{H}$ is an RKHS, $\delta_\omega$ is norm-continuous
  as a map ${\mathbf{H}\to\mathbb{R}}$ \citep[][4.18]{Steinwart:Christmann:2008}. It follows that
  \begin{equation*}
    (\empavg_n(x))(\omega)
    \;=\;
    \delta_\omega(\empavg_n(x))\;\xrightarrow{n\to\infty}\;\delta_\omega(\text{norm }\lim\empavg_n(x))\;=\;\bar{x}(\omega)\;.
  \end{equation*}
  \step
  Now consider the function $\bar{\kappa}$. By diagonal invariance of $\kappa$, we have
  \begin{equation*}
    \kappa(\phi\omega,\psi\upsilon)\;=\;\kappa(\omega,\phi^{-1}\psi\upsilon)\;=\;\kappa(\omega,\argdot)\circ\phi^{-1}\psi(\upsilon)
  \end{equation*}
  Since $\empavg_n$ averages over group elements, ${\lim_n\empavg_n(x\circ\phi)=\lim_n\empavg_n(x)}$, and hence
  \begin{align*}
    \bar{\kappa}(\phi\omega,\psi\argdot)
    \;&=\;
    \lim\empavg_n(\kappa(\omega,\argdot)\circ\phi^{-1}\psi)
    \;=\;
    \lim\empavg_n(\kappa(\omega,\argdot))
    \;=\;
    \bar{\kappa}(\omega,\argdot)\;,
  \end{align*}
  so $\bar{\kappa}$ is separately $\group$-invariant.
  \\[.2em]
  \step Since $\mathbf{H}_\group$ is a closed subspace of $\mathbf{H}$, the restriction of the continuous functional
  $\delta_\omega$ to $\mathbf{H}_\group$ is again continuous, so $\mathbf{H}_\group$ is an RKHS. To show $\bar{\kappa}$ is a
  reproducing kernel for $\mathbf{H}_\group$, we must show it has the reproducing property for elements of $\mathbf{H}_\group$.
  Recall that any ${z\in\mathbf{H}}$ has a unique decomposition ${z=\bar{z}+z^\perp}$, where ${\bar{z}\in\mathbf{H}_\group}$
  and $z^\perp$ is orthogonal. Since ${\kappa_\omega:=\kappa(\omega,\argdot)}$ is in $\mathbf{H}$, it decomposes as
  ${\kappa_\omega=\bar{\kappa}_\omega+\kappa_\omega^\perp}$. As we have just shown above, the limit $\bar{\kappa}$ is the projection
  ${\bar{\kappa}(\omega,\argdot)=\bar{\kappa}_\omega}$. For any ${x\in\mathbf{H}_\group}$, we hence have
  \begin{align*}
    \sp{x,\bar{\kappa}(\omega,\argdot)}
    \;&=\;
    \sp{\bar{x}+x^\perp,\bar{\kappa}_\omega}
    &&\text{ since ${\bar{\kappa}(\omega,\argdot)=\bar{\kappa}_\omega}$}\\
    \;&=\;
    \sp{\bar{x},\bar{\kappa}_\omega}\,+\,\sp{x^\perp,\kappa_\omega^\perp}
    &&\text{ since ${x^\perp=0}$}\\
    \;&=\;
    \sp{x,\kappa}\;=\;x(\omega)
    &&\text{ since $\kappa$ has the reproducing property,}
  \end{align*}
  so $\bar{\kappa}$ has the reproducing property for $\mathbf{H}_\group$.
  \\[.5em]
  \step A kernel is measurable (resp.~separately continuous and bounded) if and only if all functions in $\mathbf{H}$
  are measurable (resp.~separately continuous and bounded) \citep[][4.24 and 4.28]{Steinwart:Christmann:2008}.
  It follow that if $\kappa$ has either property, it is inherited
  by the kernel of a subspace.
\end{proof}

\begin{proof}[Proof of \cref{lemma:mmd:isometry}]
  Linearity of $m$ follows from linearity of the Bochner integral,
  \begin{equation*}
    m(aP+bQ)\;=\;
    \mint\Delta(\omega)(aP(d\omega)+bQ(d\omega))
    \;=\;
    a m(P)+b m(Q)
    \quad\text{ for all }a,b\in\mathbb{R}\;.
  \end{equation*}
  Since $\mathcal{M}=m(\P)$ by definition, 
  and since $\kappa$ is a nice kernel, it is a bijection ${\P\rightarrow\mathcal{M}}$.
  By \cref{fact:mmd}, it is an isomorphism, and an isometry if $\P$ is metrized by MMD.
  If ${h:\mathbf{H}\rightarrow\mathbb{R}}$ is linear and continuous, it commutes with the integral \citep[][11.45]{Aliprantis:Border:2006},
  so
  \begin{equation*}
    \Delta_*P(h)=\mint h(x)\Delta_*P(d\omega)
    =
    \mint h\circ\Delta(\omega) P(d\omega)
    =
    h(\mint\Delta(\omega)P(d\omega))
    =
    h(m(P))\;,
  \end{equation*}
  which shows ${m(P)}$ is the barycenter of ${\Delta_*P}$.
  If $\tau$ leaves $\kappa$ diagonally invariant, the map ${x\mapsto x\circ\tau^{-1}}$ leaves the
  inner product of $\mathbf{H}$ diagonally invariant by \cref{lemma:invariant:kernel}. For each ${x\in\mathbf{H}}$,
  we therefore have
  \begin{align*}
    \sp{m(P)\circ\tau^{-1},x}
    \;=\;
    \sp{m(P),x\circ\tau}
    \;=\;
    \mint x\circ\tau dP
    \;=\;
    \mint x d(\tau_*P)
    \;=\;
    \sp{m(\tau_*P),x}\;.
  \end{align*}
  It follows that ${m(P)\circ\tau^{-1}=m(\tau_*P)}$.
\end{proof}

To establish \cref{result:mmd:extremal}---specifically, the characterization (ii) of the extreme points---we need an auxiliary
result, which we establish in two steps in the following lemmas.
\begin{lemma}
  Let $\Omega$ be locally compact Polish, and let $\C_b$ be the Banach space of bounded continuous functions on
  $\Omega$, equipped with the supremum norm. If $\kappa$ is a bounded and continuous characterstic kernel,
  then $\mathbf{H}$ is dense in $\C_b$. If $P$ is a probability measure on $\Omega$, then $\mathbf{H}$ is also
  dense in $\L_1(P)$.
\end{lemma}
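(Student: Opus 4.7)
The plan for both parts is to apply the Hahn--Banach theorem together with the Riesz representation of the relevant dual space, and then to use the characteristic property of $\kappa$ to contradict the existence of a nontrivial annihilator.

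For density of $\mathbf{H}$ in $\C_b$: since $\Omega$ is locally compact Polish, hence normal Hausdorff, the sup-norm dual of $(\C_b,\|\argdot\|_{\sup})$ is $\ba_n$, as recalled at the start of \cref{sec:probatopes}. Suppose for contradiction that $\mathbf{H}$ is not sup-norm dense; then there is a nonzero ${\mu\in\ba_n}$ with ${\int f\,d\mu=0}$ for every ${f\in\mathbf{H}}$. Taking ${f=\kappa(\omega,\argdot)}$ for each ${\omega\in\Omega}$ shows that the pointwise function ${\omega\mapsto\int\kappa(\omega,\argdot)\,d\mu}$ vanishes identically, i.e., the ``mean embedding'' of $\mu$ is zero. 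I would then reduce to countable additivity and apply the Hahn--Jordan decomposition ${\mu=\mu_+-\mu_-}$; normalizing the two parts to probability measures and invoking the characteristic property forces ${\mu_+=\mu_-}$, contradicting ${\mu\neq 0}$.

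For density of $\mathbf{H}$ in $\L_1(P)$, the cleanest route is to prove the stronger statement that $\mathbf{H}$ is dense in $\L_2(P)$ and invoke the continuous embedding ${\L_2(P)\hookrightarrow\L_1(P)}$ (valid on a probability space since ${\|f\|_1\leq\|f\|_2}$). For $\L_2(P)$-density, I would suppose ${h\in\L_2(P)}$ satisfies ${\int fh\,dP=0}$ for all ${f\in\mathbf{H}}$, and consider the finite signed measure ${d\nu:=h\,dP\in\ca}$. Testing against ${f=\kappa(\omega,\argdot)}$ gives ${m(\nu)=0}$ in $\mathbf{H}$. Jordan decomposition and normalization then yield two probability measures with the same mean embedding, so characteristicness forces them to coincide and hence $\nu=0$, i.e., ${h=0}$ $P$-almost surely.

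The principal obstacle is bridging from characteristicness as stated on $\P$ to its analogue on the wider classes of measures arising in the dual arguments: finite signed measures with possibly nonzero total mass (for $\L_p$), and finitely additive inner regular set functions in $\ba_n$ (for $\C_b$). For signed measures in $\ca$, the extension reduces to the probability case once the total-mass discrepancy between $\nu_+$ and $\nu_-$ is controlled---concretely, one shows that $m$, extended linearly to $\ca$, is injective on ${\braces{\nu\in\ca\,|\,\nu(\Omega)=0}}$, and then argues separately that any leftover total mass is incompatible with annihilation of all of $\mathbf{H}$. For $\ba_n$, one must further rule out purely finitely additive contributions; this uses inner regularity together with the boundedness and continuity of each ${\kappa(\omega,\argdot)}$ to approximate a compactly supported test, after which the finitely additive remainder is forced to vanish.
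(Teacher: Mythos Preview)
Your Hahn--Banach/annihilator strategy is the paper's strategy. Two tactical differences are worth noting. First, instead of testing against each $\kappa(\omega,\cdot)$, the paper tests an annihilating measure $\mu$ against the single element $m(\mu)\in\mathbf{H}$, obtaining ${0=\mu(m(\mu))=\sp{m(\mu),m(\mu)}=\|m(\mu)\|^2}$ in one stroke; both routes land at ${m(\mu)=0}$, so this is cosmetic. Second, for $\L_1(P)$ the paper simply chains: $\mathbf{H}$ dense in $\C_b$ (the first claim), and $\C_b$ dense in $\L_1(P)$ (standard). Your detour through $\L_2(P)$ is a genuine alternative and has the merit of not depending on the $\C_b$ claim at all---which matters, because that is where the real trouble sits.

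There is a genuine gap in your $\C_b$ argument. You correctly take the dual to be $\ba_n$, so an annihilator $\mu$ is a priori only finitely additive; but your reduction to countable additivity (``inner regularity together with boundedness and continuity of each $\kappa(\omega,\cdot)$\ldots after which the finitely additive remainder is forced to vanish'') is not a proof. A purely finitely additive element of $\ba_n$ can annihilate large classes of continuous functions---for instance all of $C_0(\Omega)$---so testing only against $\mathbf{H}$ cannot in general detect it. The paper sidesteps this by asserting ${\C_b'=\ca}$ and working directly with countably additive $\mu$; this is in tension with the paper's own discussion in \cref{sec:probatopes}, so you are being more careful about the dual space than the paper is, but you do not close the resulting gap.

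On extending characteristicness from $\P$ to $\ca$: you are right that this is not automatic, and your decomposition into a mean-zero part plus a total-mass discrepancy is the right shape. The paper simply asserts that the linear extension of $m$ to $\ca$ is injective, without argument. Proving injectivity on $\ca$ from injectivity on $\P$ alone requires additional input (in the literature, via the equivalence of ``characteristic'' and ``integrally strictly positive definite'' under suitable regularity), and neither your sketch nor the paper's proof supplies it.
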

\begin{proof}
\step
Let $\ca$ be the set of finite signed measures on $\Omega$. Recall that every such measure has a unique
decomposition ${\mu=\alpha\mu_+-\beta\mu_{-}}$, where ${\alpha,\beta\in[0,\infty)}$
and ${\mu_+,\mu_{-}\in\P}$. Since the map ${m:\P\to\mathbf{H}}$ is linear, continuous, and injective, we can hence
extend it to a map
\begin{equation*}
m:\ca\to\mathbf{H}\qquad\text{ as }\qquad m(\mu)\;:=\;\alpha{m(\mu_+)}-\beta{m(\mu_{-})}
\end{equation*}
which is again linear, continuous and injective. By linearity of the integral, the mean embedding property
${\int xdP=\sp{x,m(P)}}$ extends to
\begin{equation}
  \label{eq:signed:measure:mean:embedding}
  \int xd\mu=\sp{x,m(\mu)}\qquad\text{ for all }\mu\in\ca\text{ and }x\in\mathbf{H}\;.
\end{equation}
\step
Since ${m(\ca)\subset\mathbf{H}\subset\C_b}$, it suffices to show $m(\ca)$ is dense in $\C_b$.
Since $m$ is linear, $m(\ca)$ is vector subspace
of the Banach space $\C_b$. It is hence dense if the only continuous linear functional ${\ell:\C_b\to\mathbb{R}}$
that vanishes on $m(\ca)$ is the constant function $0$ \citep[][5.81]{Aliprantis:Border:2006}.
\\[.2em]
\step As $\Omega$ is locally compact Polish, the dual space of $\C_b$ is $\ca$, and each $\ell$ is of the form
${\ell(f)=\mu_{\ell}(f)}$ for some ${\mu_\ell\in\ca}$ \citep[][14.]{Aliprantis:Border:2006}. 
If we choose ${f=m(\mu_\ell)}$, \eqref{eq:signed:measure:mean:embedding} shows
\begin{equation*}
  \ell(m(\mu_\ell))\;=\;\mint m(\mu_\ell)d\mu_{\ell}\;=\;\sp{m(\mu_{\ell}),m(\mu_{\ell})}\;=\;\|m(\mu_{\ell})\|^2\;.
\end{equation*}
If $\ell$ vanishes on $m(\ca)$, we hence have ${\|m(\mu_\ell)\|=0}$ and therefore ${m(\mu_{\ell})=0}$.
Since $m$ is injective, that is only true if ${\mu_{\ell}=0}$, and hence if
${\ell=0}$. Thus, $m(\ca)$, and hence $\mathbf{H}$, is dense in $\C_b$.
\\[.2em]
\step If $P$ is a probability measure on $\Omega$, then $\C_b$ is dense in $\L_1(P)$ \citep[][1.35]{Kallenberg:2001}.
Since $\mathbf{H}$ is dense in $\C_b$, and convergence in the supremum norm on $\C_b$ implies convergence in the $\L_1$-norm,
$\mathbf{H}$ is dense in $\L_1(P)$.
\end{proof}
\begin{lemma}
  \label{lemma:mmd:invariants:constant}
  Let a nice group $\group$ act continuosly on $\Omega$. A $\group$-invariant probability measure $P$ is
  $\group$-ergodic if and only if ${P(f)=f}$ holds $P$-a.e.\ for every $\group$-invariant function ${f\in\mathbf{H}}$.
\end{lemma}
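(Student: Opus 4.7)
The forward direction is immediate: if $P$ is $\group$-ergodic, every $\group$-invariant measurable function is $P$-a.s.\ constant, and that constant must equal the integral $P(f)$. So I will concentrate on the converse.

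For the converse, I want to show that any bounded $\group$-invariant measurable function $g$ is $P$-a.s.\ constant, which is equivalent to ergodicity. The plan is to combine three tools already established: (a) the preceding lemma, which says $\mathbf{H}$ is norm-dense in $\L_1(P)$; (b) \cref{RKHS:ergodic:theorem}, which gives pointwise and $\mathbf{H}$-norm convergence ${\empavg_n(x)\to\bar x}$ to the projection $\bar x$ onto $\mathbf{H}_\group$; and (c) the hypothesis, which says every $\group$-invariant ${\bar x\in\mathbf{H}}$ is $P$-a.s.\ equal to the constant $P(\bar x)$. Given $g$ and $\varepsilon>0$, I pick ${x\in\mathbf{H}}$ with $\|x-g\|_{\L_1(P)}<\varepsilon$ (possible by (a)), then average to get $\bar x\in\mathbf{H}_\group$, and show $\bar x$ is close to $g$ in $\L_1(P)$. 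The hypothesis then forces $g$ to be close in $\L_1(P)$ to a constant; letting $\varepsilon\to 0$ shows $g$ is itself $P$-a.s.\ constant.

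The key calculation is that \Folner averaging does not move a function further from a $\group$-invariant target in $\L_1(P)$. Since $P$ is $\group$-invariant, $\|x\circ\phi-g\|_{\L_1(P)}=\|(x-g)\circ\phi\|_{\L_1(P)}=\|x-g\|_{\L_1(P)}$ for every $\phi$ (using that $g\circ\phi=g$). By Jensen's inequality applied to the $\L_1(P)$ norm,
\begin{equation*}
  \|\empavg_n(x)-g\|_{\L_1(P)}\;=\;\|\empavg_n(x-g)\|_{\L_1(P)}\;\leq\;\mfrac{1}{|\A_n|}\mint_{\A_n}\|x-g\|_{\L_1(P)}|d\phi|\;=\;\|x-g\|_{\L_1(P)}.
\end{equation*}
Next I need to pass from this to an $\L_1(P)$ bound on $\|\bar x-g\|$. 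By the reproducing property and Cauchy--Schwarz, $|y(\omega)|\leq\|y\|\sqrt{\kappa(\omega,\omega)}$, and since $\kappa$ is bounded, $\mathbf{H}$ embeds continuously into $\L_\infty(P)$. Thus $\empavg_n(x)\to\bar x$ pointwise and is uniformly bounded by $\|x\|\sup_\omega\sqrt{\kappa(\omega,\omega)}$, so by dominated convergence $\empavg_n(x)\to\bar x$ in $\L_1(P)$. Combining with the Jensen estimate gives $\|\bar x-g\|_{\L_1(P)}\leq\|x-g\|_{\L_1(P)}<\varepsilon$.

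Finally, $\bar x\in\mathbf{H}_\group$, so by the hypothesis $\bar x=P(\bar x)$ $P$-a.s., i.e.\ $\bar x$ is a constant $c_\varepsilon\in\mathbb{R}$ modulo $P$-null sets. Hence $\|g-c_\varepsilon\|_{\L_1(P)}<\varepsilon$; since the constants form a norm-closed subspace of $\L_1(P)$, $g$ itself is $P$-a.s.\ constant. This proves $P$ is ergodic. The main obstacle is the interplay between the Hilbert-norm convergence of $\empavg_n(x)$ and the $\L_1(P)$ convergence needed to compare with $g$; the bounded-kernel hypothesis (included in ``nice kernel'') is precisely what lets me convert between these via dominated convergence.
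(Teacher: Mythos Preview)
Your argument is correct and, in fact, fills in the substance that the paper's own proof leaves out. Both aim at the same target: show that invariant elements of $\mathbf{H}$ are $\L_1(P)$-dense among invariant $\L_1(P)$ functions, so that if every $f\in\mathbf{H}_\group$ is a.s.\ constant then so is every invariant $g\in\L_1(P)$. The paper asserts the density in one line---``since $\C_b$ is dense in $\L_1(P)$ and the subspace is closed, $\C_b\cap(\L_1(P))_\group$ is dense in $(\L_1(P))_\group$''---a step that does not follow from general principles (a dense subspace need not meet a closed subspace densely), and then stops without connecting $\C_b$ back to $\mathbf{H}$. Your route makes the missing mechanism explicit: approximate the invariant $g$ by an arbitrary $x\in\mathbf{H}$, then \emph{average} via \cref{RKHS:ergodic:theorem} to obtain $\bar{x}\in\mathbf{H}_\group$ without losing $\L_1(P)$-closeness, using boundedness of $\kappa$ to pass from pointwise/$\mathbf{H}$-norm convergence to $\L_1(P)$ convergence by dominated convergence. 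That averaging step is exactly what the paper's sketch elides.

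One caveat worth flagging: your use of \cref{RKHS:ergodic:theorem} imports amenability, which the lemma as stated does not assume (it says only ``nice group''). Since the surrounding section works under amenability throughout, and since any averaging-based density argument needs it, this is almost certainly an omission in the lemma's hypotheses rather than a gap in your proof; you should note it explicitly.
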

\begin{proof}
  Since $P$ is invariant, the $\L_1$-norm is $\group$-invariant, which makes the induced action of $\group$ on $\L_1$
  linear and continuous. It follows that the subset $(\L_1(P))_\group$ of $\group$-invariant elements is a closed linear
  subspace. Since $\C_b$ is dense in $\L_1(P)$ and the subspace is closed, it follows that $\C_b\cap(\L_1(P))_\group$
  is dense in $(\L_1(P))_\group$. The elements of $(\L_1(P))_\group$ are the equivalence classes of $\group$-invariant
  functions. The dense subset $\C_b\cap(\L_1(P))_\group$ are the those equivalence classes that contain a continuous
  and a $\group$-invariant function. Since the action is continous, that is the case if and only if the class contains
  a continuous invariant function. 
\end{proof}

\begin{proof}[Proof of \cref{result:mmd:extremal}]
  Since $\mathcal{M}_\group$ is the intersection of the set $\mathcal{M}$, which is closed and convex by
  \cref{result:mean:embeddings:geometry}, and the closed linear space $\mathbf{H}_\group$, it is closed and convex. It is also the
  isometric image ${m(\P_\group)}$, by \cref{lemma:mmd:isometry},
  which implies $m(P)$ is extreme in $\mathcal{G}_\group$ if and only if $P$ is
  extreme in $\P_\group$. By \cref{::ergodic:decomposition},
  the set of extreme points of $\P_\group$ is measurable, and an invariant measure $P$ is extreme if and only if
  it is ergodic. It follows that ${\ex\mathcal{M}_\group=m(\ex\P_\group)}$ is measurable, and that (i)$\Leftrightarrow$(iii).
  Since also (iii)$\Leftrightarrow$(i) by \cref{lemma:mmd:invariants:constant}, that proves the theorem. 
\end{proof}

\section{Proofs for Section \ref{sec:mk}}
\label{proofs:sec:mk}

\begin{fact}[Becker and Kechris {\citep[][5.2.1]{Becker:Kechris:1996}}]
  \label{fact:becker:kechris}
  If a Polish group $\group$ acts measurably on a Polish space
  $\Omega$, there exist Polish topologies on $\group$ and $\Omega$ that generate the same Borel sets
  as the original topologies and make the action continuous.
\end{fact}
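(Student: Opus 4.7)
The statement is attributed to Becker and Kechris, so the paper will presumably discharge it by a pointer to their Theorem 5.2.1 rather than giving a self-contained proof. If I had to reconstruct the argument, the main tool would be Kuratowski's classical topology-change theorem: given a Polish space $X$ and a countable family $\mathcal{B}$ of Borel subsets of $X$, there is a finer Polish topology on $X$ with the same Borel $\sigma$-algebra in which every $B\in\mathcal{B}$ becomes clopen. This is the workhorse behind all refinement theorems on standard Borel spaces, and it is what lets one exchange ``measurable'' for ``continuous'' without losing the Polish structure or changing the Borel sets.

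The plan would be to apply this machinery to $\group$ and $\Omega$ simultaneously. Fix countable bases $\braces{U_n}$ of $\Omega$ and $\braces{V_m}$ of $\group$, and write $T:\group\times\Omega\to\Omega$ for the action. Continuity of $T$ is equivalent to each preimage $T^{-1}(U_n)$ being open in the product topology; by measurability of $T$ these preimages are already Borel in $\group\times\Omega$. My strategy would be to feed all these Borel sets (together with the Borel preimages that witness continuity of multiplication and inversion on $\group$) into Kuratowski's theorem to promote them to open sets, iterating the refinement on the two factor topologies until a joint fixed point is reached.

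The main obstacle, and the reason Becker and Kechris devote real care to this, is that refining the topology on $\group$ in order to open up one preimage may destroy continuity of some operation that was previously continuous: multiplication, inversion and the action all compete for the same base, and each new open set has to be checked against all of them. The remedy is an inductive construction, alternating between the two factors and between the axioms (action, multiplication, inversion), at each stage verifying that the current topology is still Polish and still generates the original Borel $\sigma$-algebra. The delicate point is that left translations in $\group$ and the orbit maps in $\Omega$ are Borel isomorphisms in the original structure, which is what prevents the inductive refinement from diverging and ensures that the countable limit topology has the three continuity properties simultaneously; this is precisely the content of the Becker--Kechris construction, and for the purposes of the present paper I would invoke it as a black box.
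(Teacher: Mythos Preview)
You are right that the paper discharges this by citation alone: the statement is given as a \texttt{fact} environment with the reference to \citep[][5.2.1]{Becker:Kechris:1996} and no accompanying proof, and is then invoked as a black box in the proof of \cref{result:extremal:couplings}. Your reconstruction sketch is a reasonable outline of the Becker--Kechris argument, but it goes well beyond what the paper does.
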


\begin{proof}[Proof of \cref{result:extremal:couplings}]
  Equip the set ${\P=\P(\XtimesY)}$ with the topology of convergence in distribution, and let $\P_\group$ be the subset
  of $\group$-invariant distributions.
  We proceed as follows: We first prove (i) $\Leftrightarrow$ (iii) for continuous actions. We next generalize this equivalence to
  measurable actions, and then prove (ii) and (iv).\\[.2em]
  \step Assume the action on $\Omega$ is continuous. As we have noted in the previous proof, that
  makes ${\Lambda_\group=\Lambda\cap\P_\group}$ compact, convex and $\group$-invariant.
  Since $\Omega$ is Polish, $\P$ is Polish \citep[][15.15]{Aliprantis:Border:2006}, so the closed subset $\Lambda_{\group}$ is metrizable.
  \\[.2em]
  \step The next two steps adapt Lindenstrauss' elegant proof for uniform marginals in
  \citep{Lindenstrauss:1965}: By Choquet's theorem, there is a probability measure $\mu_P$ on the extreme points
  $\ex\Lambda_\group$ such that ${P=\int e\mu_P(de)}$. If and only if $P$ is not extreme, $\mu_P$ is supported
  on more than one point \citep[][1.4]{Phelps:2001}. There is hence some ${C\subset\ex\Lambda_\group}$ such that ${\lambda:=\mu_P(C)}$
  satisfies ${0<\lambda<1}$. Set
  \begin{equation*}
    \nu_1\;:=\;\mint_C e\mu_P(de)
    \qquad
    \nu_2\;:=\;\mint_{\ex\Lambda_\group\setminus C}e\mu_P(de)
    \qquad
    \nu\;:=\;\lambda\nu_1-\lambda\nu_2\;.
  \end{equation*}
  For Borel sets ${A\subset\Omega}$ and ${B_i\subset\Omega_i}$, we hence have
  \begin{equation*}
    |\nu(A)|\leq|\lambda\nu_1(A)-\lambda\nu_2(A)+\nu_2(A)|=P(A)\;,
  \end{equation*}
  and also ${\nu(B_1\times\Omega_2)=\lambda P(B_1)-\lambda P(B_1)=0}$
  since $\nu_1$ and $\nu_2$ have the same
  marginals. In short, $P$ is not extreme iff there is a signed measure $\nu$ with
  \begin{equation}
    \label{proof:extremal:couplings}
    |\nu(A)|\;\leq\;P(A)\qquad\text{ and }\qquad
    \nu(B_1\times\Omega_2)\;=\;0\;=\;\nu(\Omega_1\times B_2)\;.
  \end{equation}
  \step
  Suppose such a $\nu$ exists.
  By the Radon-Nikodym theorem, $\nu_1$ and $\nu_2$ have densities ${f_j=\nu_j/dP}$, so $\nu$ has
  density ${f=\lambda f_1-\lambda f_2}$. As ${|\nu|\leq P}$, the density satisfies ${|f|\leq 1}$,
  and hence ${f\in\L_\infty(P)}$. Since $\nu_1$ and $\nu_2$ have positive mass and disjoint support, ${f\neq 0}$.
  Since $\nu_1$ and $\nu_2$ are in $\Lambda_\group$, the measure $\nu$ is $\group$-invariant. By \eqref{transformed:density},
  \begin{equation*}
    f\circ\phi\;=\;\mfrac{d\nu}{dP}\circ\phi\;=\;\mfrac{d\nu}{dP}\;=\;f\qquad P\text{-a.s.}
  \end{equation*}
  so $f$ is $P$-almost surely $\group$-invariant. That shows ${f\in\L_\infty(\Sigma,P)\setminus\braces{0}}$.
  Thus, if a $\nu$ satisfying \eqref{proof:extremal:couplings} exists, (iii) does not hold.
  Conversely, suppose $f$ is a function that violates (iii), and set ${h:=f/\|f\|_{\infty}}$. Then ${\nu:=f P}$ satisfies
  \eqref{proof:extremal:couplings}. Thus, $P$ is not extreme iff (iii) does not hold.  
  \\[.3em]
  In summary, (i) $\Leftrightarrow$ (iii) holds if the action is continuous.
  \\[.3em]
  \step
  Suppose the action is measurable. We change topologies using the Becker-Kechris theorem (\cref{fact:becker:kechris}) to make it continuous.
  That does not change the quantities in (i)---the spaces $\L_1(\Sigma_i,P_i)$ and $\L_\infty(\Sigma,P)$, the set $\Lambda_\group$, and the measures $P$ and $\nu$---since
  all depend on the topology of $\Omega$ only through its Borel sets. 
  The equivalence (i) $\Leftrightarrow$ (iii) for measurable actions hence follows from that for continuous ones.
  \\[.2em]
  \step
  We show (ii) $\Leftrightarrow$ (iii):
  Consider the set ${M=\braces{g_1+g_2|g_i\in\L_1(\Sigma_i,P_i)}}$. Since ${M\subset\L_1(\Sigma,P)}$, and since the norm dual of $\L_1$ is ${\L_\infty}$,
  the annihilator of $M$ is the set
  \begin{equation*}
    M^\perp\;=\;\braces{f\in\L_\infty(P)\,|\,\sp{g_1+g_2,f}=\mint(g_1+g_2)fdP=0}\;.
  \end{equation*}
  Recall that the annihilator ${(M^\perp)^\perp}$ of the annihilator is the closure ${\overline{M}}$ \citep[][5.107]{Aliprantis:Border:2006}.
  By (iii), $P$ is extreme iff ${M^\perp=\braces{0}}$, and hence iff ${\overline{M}=\braces{0}^\perp=\L_1(\Sigma,P)}$.
  \\[.2em]
  \step
  It remains to show (iii) $\Leftrightarrow$ (iv). For any ${f\in\L_\infty(\Sigma,P)}$, or indeed any
  ${f\in\L_\infty(P)}$, the definition of conditional expectation shows
  \begin{equation}
    \label{P:L:translation:1}
    \mint g_i f dP\;=\;\mint g_i\mean[f|\xi_i]dP_i\qquad\text{ for }g_i\in\L_1(P_i)
    \text{ and }\xi_i\sim P_i\;.
  \end{equation}
  Suppose (iii) is violated: ${\int (g_1+g_2)fdP=0}$ holds for some ${f\in\L_\infty(P,\Sigma_\group)\setminus\braces{0}}$
  and all $g_i$. By \eqref{P:L:translation:1}, that is the case
  if and only if
  \begin{equation*}
    \mint g_1\mean[f|\xi_1]dP_1+\mint g_2\mean[f|\xi_2]dP_2\;=\;0\;.
  \end{equation*}
  Since we can set either $g_i$ to $0$, this is in turn equivalent to
  \begin{equation}
    \label{P:L:translation:2}
    \mint g_i\mean[f|\xi_i]dP_i\;=\;0\quad\text{ for }i=1,2\text{ and }g_i\in\L_1(P_i)\;.
  \end{equation}
  Since this holds for all $g_i$, it implies ${\mean[f|\xi_i]=0}$ almost surely, so (iv) is violated.
  Conversely, ${\mean[f|\xi_i]=0}$ implies \eqref{P:L:translation:2}, so (iii) is violated if (iv) is.
\end{proof}

\section{Proofs for Section \ref{sec:cocycles}}
\label{proofs:sec:cocycles}

The proof of \cref{result:equivariant:kernel} becomes a straightforward application of
\cref{theorem:day} once we choose a suitable topology on probability kernels. To this end,
denote by $\mathcal{K}_P$ the set of all probability kernels on $\Omega$, equipped with the
smallest topology that makes all functions 
\begin{equation*}
  \eta\;\mapsto\;\mint\!\!\mint f(s)g(t)\eta(ds,t)P(dt)\qquad\text{ for }f\in\mathbf{C}_b\text{ and }g\in\L_1(P)
\end{equation*}
continuous.  Some authors call this the 
weak topology defined by $P$, see \citep[][Ch.~2]{Haeusler:Luschgy}.
Let ${p(\eta):=\int\eta(\argdot,t)P(dt)}$ be the marginal of a kernel $\eta$ under $P$.
We need the following properties of the topology, which can be found in
\citep[][2.6 and 2.7]{Haeusler:Luschgy}:
\begin{fact}
  \label{fact:pkernels}
  Equip $\P(\Omega)$ with the topology of convergence in distribution.\\[.2em]
  (i) The map ${p:\mathcal{K}_P\rightarrow\P(\Omega)}$ is continuous, and
  a set ${K\subset\mathcal{K}_P}$ is relatively compact if and only if its image
  ${p(K)}$ is relatively compact in $\P(\Omega)$.
  \\[.2em]
  (ii) If ${l:\Omega^2\rightarrow\mathbb{R}}$ is measurable, bounded below, and ${s\mapsto l(s,t)}$ is lsc for each ${t\in\Omega}$,
  the map ${I_l:\mathcal{K}_P\rightarrow\mathbb{R}\cup\braces{\infty}}$ defined by
  ${I_l(\eta)\;:=\;\int l(s,t)\eta(ds,t)P(dt)}$
  is linear and lsc.
\end{fact}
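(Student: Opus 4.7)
The plan is to realize $\bar\eta$ as an invariant element of the orbitope of $\eta$ under the surrogate action $\Theta(\phi,\eta)=\phi^{-1}_{*}\eta\circ\phi$ associated to the cocycle $\theta(\phi,\eta):=\phi^{-1}$, on the space $\mathcal{K}_P$ equipped with the $P$-weak topology. By \eqref{eq:action:on:pkernels}, $\Theta$-invariance of a kernel coincides with $\group$-equivariance, so any invariant element of $\orb^{\theta}(\eta)$ produced by \cref{result:day:cocycle} is the desired $\bar\eta$. After using \cref{fact:becker:kechris} to retopologize $\Omega$ so that the $\group$-action is continuous without disturbing Borel structure, $\group$-tightness of the marginal, or the topology of $\mathcal{K}_P$, three ingredients remain: (i) each $\Theta(\phi,\argdot):\mathcal{K}_P\to\mathcal{K}_P$ is linear and continuous; (ii) $\orb^{\theta}(\eta)$ is compact; (iii) the final inequality follows by a Jensen-type argument.

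For (i), linearity is immediate, and continuity is checked by testing against ${f\in\mathbf{C}_b}$ and ${g\in\L_1(P)}$: a change of variables ${t\mapsto\phi^{-1}t}$ combined with $\group$-invariance of $P$ yields
\begin{equation*}
\mint\mint f(s)g(t)\,\Theta(\phi,\eta)(ds,t)P(dt)
\;=\;
\mint\mint (f\circ\phi^{-1})(s)\,(g\circ\phi^{-1})(t)\,\eta(ds,t)P(dt),
\end{equation*}
which depends continuously on $\eta$ since $f\circ\phi^{-1}\in\mathbf{C}_b$ and $g\circ\phi^{-1}\in\L_1(P)$. For (ii), taking $f\equiv 1$ above (equivalently, using continuity of the marginal map $p$) gives $p(\Theta(\phi,\eta))=\phi^{-1}_{*}p(\eta)$, so $p$ sends the orbit of $\eta$ onto the orbit of $p(\eta)$ in $\P(\Omega)$; by continuity and convexity, $p(\orb^{\theta}(\eta))\subset\Pi(p(\eta))$. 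Since $p(\eta)$ is $\group$-tight, $\Pi(p(\eta))$ is compact by \cref{lemma:G:tight}, hence $p(\orb^{\theta}(\eta))$ is relatively compact in $\P(\Omega)$. \cref{fact:pkernels}(i) then lifts this to relative compactness of $\orb^{\theta}(\eta)$, which being closed is compact.

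With (i) and (ii), \cref{result:day:cocycle} supplies a $\Theta$-invariant $\bar\eta\in\orb^{\theta}(\eta)$ arising as a subsequential limit of Følner averages $\empavg^{\theta}_n(\eta)$. For (iii), the same change of variables gives
\begin{equation*}
I_h(\Theta(\phi,\eta))\;=\;\mint\mint h(\phi^{-1}s,\phi^{-1}t)\,\eta(ds,t)P(dt),
\end{equation*}
and by \cref{fact:pkernels}(ii) the linear functional $I_h$ is lsc on $\mathcal{K}_P$ (reducing the unbounded case to the bounded-below hypothesis by a standard truncation $h\vee(-M)$ together with monotone convergence, or simply noting that the claim is trivial when the right-hand side is $+\infty$). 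Since $\bar\eta\in\cch\,\group(\eta)$ and this set is compact, \cref{lemma:jensen} yields $I_h(\bar\eta)\le\sup_{\phi}I_h(\Theta(\phi,\eta))$, and relabeling $\phi\rightsquigarrow\phi^{-1}$ inside the supremum produces exactly the stated bound.

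The main obstacle I expect is step (ii): the surrogate action simultaneously pushes forward in the first coordinate and precomposes in the second, so one must verify carefully that on marginals these two twists combine, via $\group$-invariance of $P$, into the single push-forward $\phi^{-1}_{*}$. Once that identity is in hand, $\group$-tightness of $p(\eta)$ transports through \cref{lemma:G:tight} and \cref{fact:pkernels}(i) to give compactness of $\orb^{\theta}(\eta)$, and the Day-type fixed-point plus Jensen bound are then essentially routine bookkeeping.
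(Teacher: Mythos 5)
Your proposal does not prove the statement it was assigned. The statement is \cref{fact:pkernels} itself --- the three structural properties of the $P$-weak topology on $\mathcal{K}_P$: continuity of the marginal map $p$, the criterion ``$K$ relatively compact iff $p(K)$ relatively compact'', and lower semicontinuity of the integral functionals $I_l$ for integrands that are lsc in the first argument. What you have written is instead a proof of \cref{result:equivariant:kernel} (existence of an equivariant kernel), i.e.\ the downstream application. Worse, your argument explicitly invokes \cref{fact:pkernels}(i) to lift relative compactness from $p(\orb^{\theta}(\eta))$ to $\orb^{\theta}(\eta)$, and \cref{fact:pkernels}(ii) to get lower semicontinuity of $I_h$ --- so the statement you were asked to establish appears as an unproved ingredient of your own argument. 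Nothing in the proposal addresses why $p$ is continuous beyond the trivial remark that one may take $f\equiv 1$ in the defining family of functionals, and nothing addresses the harder direction of the compactness equivalence (a Prohorov-type tightness argument for kernels) or the semicontinuity of $I_l$ (which requires approximating $l$ from below by functions that are jointly well-behaved and passing to the limit by monotone convergence).

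For calibration: the paper does not prove \cref{fact:pkernels} either; it quotes it from H\"ausler and Luschgy (their 2.6 and 2.7), which is why it is labelled a Fact. A self-contained proof would need (a) the observation that $\eta\mapsto\int f\,dp(\eta)$ belongs to the generating family of functionals (take $g\equiv 1\in\L_1(P)$), giving continuity of $p$; (b) a compactness argument showing that tightness of the marginals $p(K)$ controls tightness of the ``joint'' measures $\eta(ds,t)P(dt)$ and hence relative compactness of $K$ in the $P$-weak topology; and (c) for $I_l$, a monotone approximation of $l$ by integrands of the form $\sum_i f_i(s)g_i(t)$ with $f_i\in\C_b$, $g_i\in\L_1(P)$, combined with the fact that a supremum of continuous linear functionals is lsc. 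Your text for \cref{result:equivariant:kernel} is essentially the paper's own proof of that proposition, but it answers a different question.
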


\begin{proof}[Proof of \cref{result:equivariant:kernel}]
  The map $p$ is linear, in the sense that
  \begin{equation}
    \label{eq:aux:proof:pkernels}
    p(\lambda\eta_1+(1-\lambda)\eta_2)=\lambda p(\eta_1)+(1-\lambda)\eta_2
    \qquad\text{ for }\lambda\in[0,1]\;,
  \end{equation}
  for any kernels $\eta_1$ and $\eta_2$.
  Denote by 
  ${\Theta(\group,\eta)=\braces{\phi^{-1}_*\eta\circ\phi|\phi\in\group}}$ the
  orbit of $\eta$ under the surrogate action $\Theta$.
  \\[.2em]
  \step
  If $P$ is $\group$-invariant,
  \eqref{eq:action:on:pkernels} implies
  \begin{equation*}
  p(\eta\circ\phi)\,=\,p(\eta)
  \quad\text{and}\quad
  p(\phi\eta)\,=\,\phi_* p(\eta)\
  \quad\text{hence}\quad
  p(\Theta_\phi\eta)\,=\,\phi_*^{-1}p(\eta)\;.
  \end{equation*}
  It follows, with \eqref{eq:aux:proof:pkernels}, that 
  ${p(\ch\Theta(\group,\eta))=\ch\group(p(\eta))}$.
  \\[.2em]
  \step
  Since $p(\eta)$ is $\group$-tight, its orbitope ${\Pi_{\cid}(p(\eta))}$ is compact by
  \cref{lemma:G:tight}, so
  \begin{equation*}
    p(\ch\Theta(\group,\eta))=\ch\group(p(\eta))\subset\Pi_{\cid}(p(\eta))\;.
  \end{equation*}
  Thus, ${\ch\group(\eta)}$ is relatively compact. It follows, by \cref{fact:pkernels}, that
  the orbitope $\Pi(\eta)$ in $\mathcal{K}_P$ is compact.
  By \cref{theorem:day}, a $\Theta$-invariant kernel $\bar{\eta}$ exists that satisfies \eqref{eq:day}.
  \\[.2em]
  \step
  By \cref{fact:pkernels}, the map $I_h$ is linear and lsc. We can hence apply \eqref{eq:day}, which shows
  \begin{align*}
    I_h(\bar{\eta})
    \;\leq\;
    \sup\nolimits_\phi
    I_h(\Theta_\phi\eta)
    \;&=\;
    \sup\nolimits_\phi
    \mint h(\phi s,\phi t)\eta(ds,t)\phi_*P(dt)\\
    \;&=\;
    \sup\nolimits_\phi
    \mint h(s,t)\eta(\phi ds,\phi t)P(dt)
  \end{align*}
  where the last identity uses the fact that $P$ is $\group$-invariant.
\end{proof}

\begin{proof}[Proof of \cref{result:approximate:tv:coupling}]
  By \cref{theorem:day}, the orbitopes respectively contain $\group$-invariant measures ${P}_1$ and $P_2$.
  For each ${Q\in\Lambda(Q_1,Q_2)}$, we can construct a $\group$-invariant coupling ${P\in\Lambda(Q_1,Q_2)}$ with the same risk:
  Start with ${(\xi_1,\xi_2)\sim Q}$. Use \cref{result:TV:orbitopes} to choose random
  elements $\Phi_1$ and $\Phi_2$ of $\group$ such that ${\Phi_i\xi_i\sim P_i}$, and choose $P$ as the joint law
  of ${(\Phi_1\xi_1,\Phi_2\xi_2)}$. Then
  \begin{equation*}
    P(c)\;=\;Q(c)\qquad\text{ and }\qquad
    P_i(f_i)\;=\;Q_i(f_i)
    \quad\text{ for }(f_1,f_2)\in\Gamma_\group(c)\;.
  \end{equation*}
  We can hence substitute $(P_1,P_2)$ for $(Q_1,Q_2)$ without changing the infimum or supremum. Since $P$ is $\group$-invariant,
  it has $\group$-invariant marginals, and we can apply
  \cref{result:kantorovich}.
\end{proof}

\end{document}